\newtheorem{theorem}{Theorem}[section]
\newtheorem{proposition}[theorem]{Proposition}
\newtheorem{lemma}[theorem]{Lemma}
\theoremstyle{definition}
\newtheorem{remark}[theorem]{Remark}
\newcommand{\bel}{\begin{equation} \label}
\newcommand{\ee}{\end{equation}}
\newcommand{\pd}{\partial}
\newcommand{\supp}{{\text{supp}}}
\newcommand{\C}{{\mathbb C}}
\newcommand{\R}{{\mathbb R}}
\newcommand{\N}{{\mathbb N}}
\newcommand{\cB}{{\mathcal B}}
\newcommand{\cF}{{\mathcal F}}
\newcommand{\sH}{{\mathscr H}}
\newcommand{\sZ}{{\mathscr Z}}
\newcommand{\cO}{{\mathcal O}}
\newcommand{\Gi}{{\Gamma}_{\mathrm{in}}}
\newcommand{\Go}{{\Gamma}_{\mathrm{out}}}
\newcommand{\sHi}{{\mathscr{H}}_{\mathrm{in}}}
\newcommand{\bsd}{{\mathrm{BSD}}}
\newcommand{\dist}{{\mathrm{dist}}}
\newcommand{\Sp}{{\mathrm{Sp}}}
\def\epsilon{\varepsilon}
\def\phi {\varphi}
\newcommand{\eps}{{\varepsilon}}
\def\beq{\begin{equation}}
\def\eeq{\end{equation}}
\renewcommand{\leq}{\leqslant}
\renewcommand{\geq}{\geqslant}
\newcommand{\bea}{\begin{eqnarray}}
\newcommand{\eea}{\end{eqnarray}}
\newcommand{\beas}{\begin{eqnarray*}}
\newcommand{\eeas}{\end{eqnarray*}}
\newcommand{\Pim}[1]{\ensuremath{\mathrm{Im} \left( #1 \right)}}
{

\providecommand{\abs}[1]{\left\lvert#1\right\rvert}
\providecommand{\norm}[1]{\left\lVert#1\right\rVert}

\title[Multidimensional Borg-Levinson Inverse Spectral Theory]{Multidimensional Borg-Levinson Inverse Spectral Theory}
\author{\'Eric Soccorsi}


\begin{document}
\begin{abstract}
This text deals with multidimensional Borg-Levinson inverse theory. Its main purpose is to establish that the Dirichlet eigenvalues and Neumann boundary data of the operator $-\Delta +q$, acting in a bounded domain of $\R^d$ with $d \geq 2$, uniquely determine the real-valued bounded potential $q$. We first address the case of incomplete spectral data, where finitely many boundary spectral eigen-pairs remain unknown. Under suitable summability condition on the Neumann data, we also consider the case where only the asymptotic behavior of the eigenvalues is known. Finally, we use the multidimensional Borg-Levinson theory for solving parabolic inverse coefficient problems.\\
\end{abstract}
\maketitle

\tableofcontents

\newpage

\section{A short introduction to inverse spectral problems}
\label{sec-intro}
\setcounter{equation}{0}

Let $\Omega \subset \R^d$, where $d \in \N := \{ 1 , 2, \ldots \}$, be a bounded domain with $C^{1,1}$ boundary $\pd \Omega$. In the particular case where $d=1$, we set $\Omega:=(0,1)$.
Given $q \in L^\infty(\Omega)$, real-valued, we perturb the Dirichlet Laplacian in $L^2(\Omega)$ by $q$, i.e. we consider the operator acting in $L^2(\Omega)$ as $-\Delta+q$, that is endowed with homogeneous Dirichlet boundary conditions.

We investigate the inverse problem of determining the operator $-\Delta+q$, that is of determining the perturbation potential $q$, from knowledge of partial spectral data of $-\Delta+q$. More precisely, we are interested in two types of results: 
\begin{itemize}
\item A {\it uniqueness result}, expressing that every two admissible potentials $q_j$, $j=1,2$, are equal whenever the spectral data of $-\Delta+q_1$ coincide with the ones of $-\Delta+q_2$, i.e. we seek the following implication:
$$ \left( \mbox{Spectral\ data\ of}\ -\Delta+q_1 =  \mbox{Spectral\ data\ of}\ -\Delta+q_2 \right) \Longrightarrow (q_1=q_2). $$
\item A {\it stability result}, claiming that any unknown admissible potential $q$ is  not only uniquely determined (in the sense of the above implication) by the spectral data of $-\Delta+q$, but also that it depends continuously on these data.
\end{itemize} 


\subsection{Self-adjointness, spectral data and all that}
For $M \in (0,+\infty)$ fixed, let $q \in L^\infty(\Omega,\R)$ fulfill
\bel{bb}
\norm{q}_{L^\infty(\Omega)} \leq M.
\ee
We define $A_q$ as the operator in $L^2(\Omega)$, associated with the closed sesquilinear form 
\bel{i0} 
a_q(u,v) := \int_{\Omega} \left( \nabla u(x) \cdot \overline{\nabla v(x)} + q(x) u(x) \overline{v(x)} \right) dx,\ u , v \in D(a_q) := H_0^1(\Omega),
\ee
where $H_0^1(\Omega)$ denotes the closure of $C_0^\infty(\Omega)$, the set of infinitely differentiable and compactly supported functions in $\Omega$, for the topology of the first-order Sobolev space $H^1(\Omega)$.
The operator $A_q$ is self-adjoint in $L^2(\Omega)$ and acts on its {domain}\footnote{The assumption $\pd \Omega \in C^{1,1}$ is needed for applying the classical elliptic regularity theory that establishes that $D(A_q) \subset H^2(\Omega)$.} as 
\bel{i1} 
A_q u = (-\Delta + q) u,\ u \in D(A_q)=H_0^1(\Omega) \cap H^2(\Omega).
\ee
Here, the notation $H^2(\Omega)$ stands for the usual second-order Sobolev space in $\Omega$, and we recall that the graph norm of $A_q$ is equivalent to the one of $H^2(\Omega)$, i.e.
\bel{i2}
c^{-1} \norm{u}_{H^2(\Omega)} \leq \norm{u}_{D(A_q)} := \norm{u}_{L^2(\Omega)} + \norm{A_q u}_{L^2(\Omega)}  \leq c \norm{u}_{H^2(\Omega)},\ u \in D(A_q),
\ee
for some constant $c \in (1,+\infty)$ that depends only on $\Omega$ and $M$. 

Next, since the injection $H_0^1(\Omega) \hookrightarrow L^2(\Omega)$ is compact, then the same is true for the resolvent\footnote{This is provided $0$ is in the resolvent set of $A_q$, but since $q$ is bounded as in \eqref{bb}, one can assume that this is the case without restricting the generality of the reasoning, upon possibly substituting $A_q+M$ for $A_q$.} of $A_q$, and the spectrum of $A_q$ is discrete. We denote by $\{ \lambda_n,\ n \in \N \}$ the non-decreasing sequence of eigenvalues of $A_q$, repeated with their multiplicity, 
$$ \lambda_1 \leq \dots \leq \lambda_n \leq \lambda_{n+1} \leq \ldots,\ n \in \N. $$
In view of \eqref{bb}-\eqref{i0}, we infer from the Min-Max principle that
$$ \lambda_1 \geq -M, $$
and we recall for further use that
\bel{i2a}
\lim_{n \to \infty} \lambda_n = +\infty.
\ee

Let $\{ \varphi_n,\ n \in \N \}$ be an orthonormal basis in $L^2(\Omega)$ of eigenfunctions of $A_q$, such that $$ A_q \varphi_n = \lambda_n \varphi_n,\ n \in \N. $$
With reference to \eqref{i2}-\eqref{i2a}, there exist two constants $n_M \in \N$ and $c \in (0,+\infty)$, both of them depending only on $\Omega$ and $M$, such that
\bel{i2b}
c^{-1} \lambda_n \leq \norm{\varphi_n}_{H^2(\Omega)} \leq c \lambda_n,\ n \geq n_M. 
\ee
Put 
$$\psi_n:= \partial_\nu \varphi_n,\ n \in \N, $$ 
where $\nu$ denotes the outward normal vector to $\pd \Omega$ and $\pd_\nu u := \nabla u \cdot \nu$ is the normal derivative of $\varphi$. Then, it follows from \eqref{i2b} and the continuity of the trace operator $\tau_1 : u \mapsto (\partial_\nu u)_{| \pd \Omega}$ from 
$H^2(\Omega)$ into $H^{1 \slash 2}(\pd \Omega)$, that we have
\bel{i2c}
\norm{\psi_n}_{H^{1 \slash 2}(\pd \Omega)} \leq c  \lambda_n,\ n \geq n_M,
\ee
for some positive constant $c$ that depends only on $\Omega$ and $M$.

\subsection{Review of the one-dimensional case}
Fix $d=1$ and recall that we have $\Omega=(0,1)$ with
$$ A_q=-\frac{d^2}{dx^2} + q(x),\ D(A_q)= \{ u \in H^2(0,1),\ u(0)=u(1)=0 \}, $$
in this case. 

\subsubsection{An obstruction to identifiability}
\label{sec-obs}
A very natural question that arises in this context is to know whether $q$ can be determined by knowledge of $\Sp(A_q) = \{ \lambda_n,\ n \in \N \}$. But the answer is negative as the spectrum does not discriminate between symmetric potentials. This can be seen by noticing that we have
\bel{i3}
U A_q U^{-1} = A_{U q},
\ee
where we have set $(U f)(x):=f(1-x)$ for all $f \in L^2(\Omega)$ and a.e. $x \in \Omega$. Since $U$ is unitary in $L^2(\Omega)$, then the operators $A_q$ and $A_{U q}$ are unitarily equivalent,  by \eqref{i3}. Hence they are iso-spectral: $\Sp(A_{U q}) = \Sp(A_q)$. Thus, one cannot distinguish between the potentials $q$ and $U q$, from knowledge of the two spectra $\Sp(A_q)$ and  $\Sp(A_{U q})$, despite of the fact that $q \neq U q$ when $q$ is not symmetric about the midpoint $x=1 \slash 2$ of the interval $\Omega$. 

Therefore, the spectrum of $A_q$ does not uniquely determine $q$, and some additional spectral data is needed for identifying the potential.

\subsubsection{One-dimensional Borg-Levinson theorem}
Assuming that $\varphi_n'(0)=\frac{d \varphi_n}{dx}(0)=1$ for all $n \in \N$, G. Borg \cite{Borg} and N. Levinson \cite{Levinson} established when $\Sp (A_q)$ is known, that additional knowledge of $\{ \norm{\varphi_n}_{L^2(\Omega)},\ n \in \N \}$ uniquely determines $q$.

\begin{theorem}[Borg (1946) and Levinson (1949)]
\label{thm-BL1}
For $\lambda \in \R$ and for $q_j \in L^\infty(0,1;\R)$, $j=1,2$, let $u_j(\cdot,\lambda)$ be the $H^2(0,1)$-solution to the initial values problem
\bel{sl} 
\left\{ \begin{array}{ll} (-\frac{d^2}{dx^2} + q_j(x)) u_j(x,\lambda) = \lambda u_j(x,\lambda),\  & x \in (0,1)\\
u_j(0,\lambda)=0,\ u_j'(0,\lambda)=1. & \end{array} \right. 
\ee
Denote by $\{ \lambda_{j,n},\ n \in \N \}$ the non-decreasing sequence of the Dirichlet eigenvalues associated with $A_{q_j}$, obtained by imposing:
$$ u_j(1,\lambda_{j,n})=0,\ n \in \N. $$
Then, we have the implication:
$$ \left( \lambda_{1,n} = \lambda_{2,n}\ \mbox{and}\ \norm{u_1(\cdot,\lambda_{1,n})}_{L^2(0,1)}  = \norm{u_2(\cdot,\lambda_{2,n})}_{L^2(0,1)},\ n \in \N \right) 
\Longrightarrow \left( q_1 = q_2\ \mbox{in}\ (0,1) \right). $$
\end{theorem}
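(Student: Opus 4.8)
\emph{Proof proposal.}
Write $\lambda_n := \lambda_{1,n} = \lambda_{2,n}$ and, for $j \in \{1,2\}$, put $u_{j,n} := u_j(\cdot,\lambda_n)$ and $\alpha_{j,n} := \norm{u_{j,n}}_{L^2(0,1)}^2$. By uniqueness for the initial value problem \eqref{sl}, the eigenspace of $A_{q_j}$ attached to $\lambda_n$ is one-dimensional and spanned by $u_{j,n}$; hence $\varphi_{j,n} := \alpha_{j,n}^{-1/2} u_{j,n}$ is an orthonormal eigenbasis of $A_{q_j}$, and since $\varphi_{j,n}'(0) = \alpha_{j,n}^{-1/2}$ coincides up to sign with the Neumann datum $\partial_\nu \varphi_{j,n}$ at the endpoint $0$, the hypothesis of the theorem says precisely that the \emph{spectral function}
$$ \rho_j(\lambda) := \sum_{\{ n \,:\, \lambda_n < \lambda \}} \alpha_{j,n}^{-1}, \qquad \lambda \in \R, $$
of $A_{q_j}$ (relative to the solution of \eqref{sl} normalized by $u_j'(0,\cdot) = 1$) is the same for $j = 1$ and $j = 2$; this is the pattern ``eigenvalues and Neumann boundary data'' announced in the abstract, with the feature --- peculiar to $d = 1$ --- that a single boundary point already suffices. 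The plan is then to appeal to the classical Gelfand-Levitan inverse spectral theorem, by which $\rho_j$ determines $q_j$: the whole content of Theorem~\ref{thm-BL1} is concentrated in that implication, the reduction to it being essentially bookkeeping.

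Concretely, letting $\rho_0$ denote the spectral function of the free operator ($q = 0$), one forms the kernel
$$ F_j(x,t) := \int_{\R} \frac{\sin(\sqrt{\lambda}\, x)}{\sqrt{\lambda}}\, \frac{\sin(\sqrt{\lambda}\, t)}{\sqrt{\lambda}}\, d\big( \rho_j - \rho_0 \big)(\lambda), \qquad (x,t) \in (0,1)^2, $$
the integral being convergent by the standard high-energy asymptotics of the $\lambda_n$ and of the norming constants $\alpha_{j,n}$ (the latter coming from the Wronskian identity $\alpha_{j,n} = u_j'(1,\lambda_n)\, \partial_\lambda u_j(1,\lambda_n)$, obtained by differentiating \eqref{sl} in $\lambda$ and integrating the resulting total derivative). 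One then proves that the Gelfand-Levitan integral equation
$$ K_j(x,t) + F_j(x,t) + \int_0^x K_j(x,s)\, F_j(s,t)\, ds = 0, \qquad 0 \leq t \leq x \leq 1, $$
is uniquely solvable in $C(\{ 0 \leq t \leq x \leq 1 \})$, that $u_j(x,\lambda) = \frac{\sin(\sqrt{\lambda}\, x)}{\sqrt{\lambda}} + \int_0^x K_j(x,t)\, \frac{\sin(\sqrt{\lambda}\, t)}{\sqrt{\lambda}}\, dt$, and finally that $q_j(x) = 2\, \frac{d}{dx} K_j(x,x)$ for a.e.\ $x \in (0,1)$. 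Since $\rho_1 = \rho_2$ forces $F_1 = F_2$, uniqueness yields $K_1 = K_2$, whence $q_1 = q_2$.

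I expect the entire difficulty to sit in that last, quoted, step --- the convergence of $F_j$, the \emph{unique solvability} of the Gelfand-Levitan equation, and the diagonal-regularity identity $q_j = 2\, \frac{d}{dx} K_j(x,x)$ --- which is where the one-dimensional structure of $-\frac{d^2}{dx^2} + q$ is genuinely used and for which no soft substitute is available; it also explains why, unlike the $d \geq 2$ situation studied in the rest of the paper, \emph{all} of the data $\{ (\lambda_n,\alpha_{j,n}) \}$ is needed here. An equivalent route, closer in spirit to the multidimensional theory, repackages $\rho_j$ into the Weyl-Titchmarsh function $m_j(\lambda) := -u_j'(1,\lambda)/u_j(1,\lambda)$: since $\lambda \mapsto u_j(1,\lambda)$ is entire of order $1/2$ with simple zeros $\{\lambda_n\}$, Hadamard factorization together with the universal leading asymptotics of this characteristic function gives $u_1(1,\cdot) \equiv u_2(1,\cdot)$, whereupon the Wronskian identity and $\alpha_{1,n} = \alpha_{2,n}$ force equality of the residues $-u_j'(1,\lambda_n)^2 / \alpha_{j,n}$ of $m_j$; being Herglotz functions with the same poles, residues and behavior at infinity, $m_1 \equiv m_2$. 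One then concludes by Marchenko's uniqueness theorem --- the one-dimensional counterpart of the fact, underlying the $d \geq 2$ results, that the Dirichlet-to-Neumann map determines the potential --- which is once again the hard kernel of the argument.
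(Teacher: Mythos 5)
You should first be aware that the paper does not prove Theorem \ref{thm-BL1} at all: it is quoted as a classical result of Borg and Levinson, with citations, as part of the introductory review of the one-dimensional case, and the paper's actual proofs begin only with the multidimensional statements (Theorems \ref{thm-BLmd1} and \ref{thm-BLmd2}). So there is no proof in the paper to compare yours against, and your proposal has to be judged on its own.

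On its own terms, your outline is the standard and correct route, and the bookkeeping you do is accurate: the eigenspaces are simple, the hypothesis is exactly the statement that the spectral functions $\rho_1$ and $\rho_2$ coincide, the Wronskian identity $\alpha_{j,n} = u_j'(1,\lambda_n)\,\partial_\lambda u_j(1,\lambda_n)$ is right (integrate $\frac{d}{dx}\bigl(u'\,\partial_\lambda u - u\,\partial_\lambda u'\bigr) = u^2$ over $(0,1)$ and use $u(0)=0$, $u'(0)=1$ and $u(1,\lambda_n)=0$), and in the alternative route the chain Hadamard factorization $\Rightarrow$ $u_1(1,\cdot)\equiv u_2(1,\cdot)$ $\Rightarrow$ $\partial_\lambda u_1(1,\lambda_n)=\partial_\lambda u_2(1,\lambda_n)$ $\Rightarrow$ (via the Wronskian identity and $\alpha_{1,n}=\alpha_{2,n}$) equality of the residues of the Weyl functions, is coherent. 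But what you have written is a roadmap rather than a proof: every analytically substantive step is named and deferred. The convergence of $F_j$, the unique solvability of the Gelfand--Levitan equation (which for the uniqueness statement is the whole point, and which rests on the Parseval relation for the eigenfunction expansion), the diagonal-regularity identity $q_j = 2\frac{d}{dx}K_j(x,x)$ for merely $L^\infty$ potentials, and --- in the second route --- the Liouville-type argument showing that $m_1-m_2$, once entire and decaying along the negative real axis, vanishes identically, together with Marchenko's uniqueness theorem, are all asserted by reference. You are candid that the difficulty sits there, which is to your credit, but it means the proposal establishes the reduction and not the theorem. If the intent is to match the paper, nothing further is required, since the paper also treats this as a citation; if the intent is a self-contained proof, the Gelfand--Levitan (or Marchenko) machinery would have to be developed, and that is a substantial undertaking.
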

Later on, I. M. Gel'fand and B. M. Levitan proved that uniqueness is still valid upon substituting $u_j'(1,\lambda_{j,n})$ for $\norm{u_j(\cdot,\lambda_{j,n})}_{L^2(0,1)}$, $j=1,2$,  in Theorem \ref{thm-BL1}:
\begin{theorem}[Gel'fand-Levitan (1951)]
\label{thm-GL}
Under the conditions of Theorem \ref{thm-BL1} we have:
$$ \left( \lambda_{1,n} = \lambda_{2,n}\ \mbox{and}\ u_1'(1,\lambda_{1,n})  = u_2'(1,\lambda_{2,n}),\ n \in \N \right) 
\Longrightarrow \left( q_1 = q_2\ \mbox{in}\ (0,1) \right). $$
\end{theorem}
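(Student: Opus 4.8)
The plan is to deduce Theorem \ref{thm-GL} from the Borg--Levinson Theorem \ref{thm-BL1} by showing that, under the stated hypotheses, the data $\{(\lambda_{j,n},u_j'(1,\lambda_{j,n})):n\in\N\}$ already determine the norming constants $\|u_j(\cdot,\lambda_{j,n})\|_{L^2(0,1)}$; once this is established, Theorem \ref{thm-BL1} applies verbatim and gives $q_1=q_2$.

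First I would bring in the characteristic function $\Delta_j(\lambda):=u_j(1,\lambda)$. Recasting \eqref{sl} as a Volterra integral equation, one checks that $\lambda\mapsto u_j(x,\lambda)$ is entire for each $x\in[0,1]$, that $\Delta_j$ is entire of order $1/2$ with simple zeros forming exactly the Dirichlet spectrum $\{\lambda_{j,n}:n\in\N\}$, and that $\Delta_j(\lambda)=\frac{\sin\sqrt\lambda}{\sqrt\lambda}(1+o(1))$ as $|\lambda|\to\infty$ in $\C$ off the positive real axis --- in particular its leading asymptotics is independent of $q_j$. Since $\lambda_{1,n}=\lambda_{2,n}$ for all $n$, the quotient $\Delta_1/\Delta_2$ extends across the common (simple) zeros to an entire, zero-free function of order less than $1$, hence to a nonzero constant; that constant is $1$ by the common asymptotics. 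Thus $\Delta_1\equiv\Delta_2$, and differentiating in $\lambda$ yields $\partial_\lambda u_1(1,\lambda_{1,n})=\partial_\lambda u_2(1,\lambda_{2,n})$ for every $n\in\N$.

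Next I would prove the Wronskian-type identity
\[
\|u_j(\cdot,\lambda_{j,n})\|_{L^2(0,1)}^2=u_j'(1,\lambda_{j,n})\,\partial_\lambda u_j(1,\lambda_{j,n}),\qquad n\in\N.
\]
Writing $u:=u_j(\cdot,\lambda)$ and $\dot u:=\partial_\lambda u_j(\cdot,\lambda)$, differentiating $-u''+q_ju=\lambda u$ in $\lambda$ gives $-\dot u''+q_j\dot u=u+\lambda\dot u$, while the $\lambda$-independence of the Cauchy data in \eqref{sl} forces $u(0,\lambda)=\dot u(0,\lambda)=\dot u'(0,\lambda)=0$. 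Multiplying these two equations by $\dot u$ and $u$ respectively, subtracting, and integrating over $(0,1)$ gives $[\dot u'\,u-u'\,\dot u]_0^1=-\int_0^1 u^2\,dx$; evaluating at $\lambda=\lambda_{j,n}$ and using $u(0,\lambda_{j,n})=u(1,\lambda_{j,n})=\dot u(0,\lambda_{j,n})=0$ yields the claimed identity.

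Combining the two steps, the hypothesis $u_1'(1,\lambda_{1,n})=u_2'(1,\lambda_{2,n})$ together with $\partial_\lambda u_1(1,\lambda_{1,n})=\partial_\lambda u_2(1,\lambda_{2,n})$ forces $\|u_1(\cdot,\lambda_{1,n})\|_{L^2(0,1)}=\|u_2(\cdot,\lambda_{2,n})\|_{L^2(0,1)}$ for all $n\in\N$; since moreover $\lambda_{1,n}=\lambda_{2,n}$, Theorem \ref{thm-BL1} gives $q_1=q_2$ in $(0,1)$. I expect the main obstacle to be the first step: one must check carefully that $\Delta_1$ and $\Delta_2$ coincide --- equivalently, that their Hadamard products carry the same multiplicative constant --- which relies on the classical but somewhat delicate asymptotics of $u_j(1,\lambda)$ and of the eigenvalue counting, and crucially on the simplicity of the one-dimensional Dirichlet spectrum; without simplicity the zero set of $\Delta_j$ would not pin down $\Delta_j$ up to a constant. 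The remaining computations are routine integrations by parts.
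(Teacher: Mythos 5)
The paper does not actually prove Theorem \ref{thm-GL}: it is stated as a classical result and attributed to Gel'fand and Levitan with a citation to \cite{GL}, so there is no in-text argument to compare yours against. Judged on its own, your reduction to Theorem \ref{thm-BL1} is correct and is the standard modern route (as in P\"oschel--Trubowitz or Freiling--Yurko). Both key ingredients check out. The Lagrange-type identity
\[
\norm{u_j(\cdot,\lambda_{j,n})}_{L^2(0,1)}^2=u_j'(1,\lambda_{j,n})\,\partial_\lambda u_j(1,\lambda_{j,n})
\]
follows exactly as you say from integrating $\frac{d}{dx}\left(\dot u' u-u'\dot u\right)=-u^2$ and using $u(0)=\dot u(0)=\dot u'(0)=0$ and $u(1,\lambda_{j,n})=0$; note that it also delivers, for free, the simplicity of the zeros of $\Delta_j$ and the nonvanishing of $u_j'(1,\lambda_{j,n})$, since the left-hand side is strictly positive --- so the point you flag as ``crucial'' is not an extra hypothesis but a byproduct of the same computation. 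For the identification $\Delta_1\equiv\Delta_2$, the cleanest packaging is to write the Hadamard factorization $\Delta_j(\lambda)=C_j\prod_{n\geq 1}\left(1-\lambda/\lambda_{j,n}\right)$ (order $1/2$, genus $0$), so that equality of the spectra makes the two products literally identical and only the constants $C_1,C_2$ need comparing; the common asymptotics $\Delta_j(\lambda)=\frac{\sin\sqrt{\lambda}}{\sqrt{\lambda}}(1+o(1))$ along, say, the negative real axis then forces $C_1=C_2$. This avoids having to argue separately about the order of the quotient. With $\Delta_1\equiv\Delta_2$ you get $\partial_\lambda u_1(1,\lambda_{1,n})=\partial_\lambda u_2(1,\lambda_{2,n})$, hence equality of the norming constants, and Theorem \ref{thm-BL1} applies. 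The argument is complete modulo the standard Volterra-series asymptotics of $u_j(1,\lambda)$, which you correctly identify as the only analytically nontrivial input.
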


\begin{remark}
Let $p$, $q$, $\rho$ be real-valued and bounded functions in $(0,1)$, with $p$ and $\rho$ positive. Introduce the operator
$$ A_{p,q,\rho} = -\rho^{-1} \frac{d}{dx} \left( p \frac{d}{dx} \right) + q,\  D(A_{p,q,\rho})= \{ u \in H_0^1(0,1),\ p u' \in H^1(0,1) \}, $$
which is self-adjoint in $L_\rho^2(0,1)$, the usual $L^2(0,1)$-space endowed with the weighted scalar product $\langle u , v \rangle_{L_\rho^2(0,1)}=\int_0^1 \rho(x) u(x) \overline{v(x)} dx$. Denote by $\{ \lambda_n,\ n \in \N \}$ the sequence of the eigenvalues of $A_{p,q,\rho}$ and by $\{ u_n,\ n \in \N \}$ a $L_\rho^2(0,1)$-orthonormal basis of eigenfunctions of $A_{p,q,\rho}$, obeying $A_{p,q,\rho} u_n = \lambda_n u_n$.
If $p$ and $\rho$ are $C^{1,1}(0,1)$, then the boundary spectral data 
$$\bsd(p,q,\rho)=\{ (\lambda_n , u_n'(1)),\ n \in \N \}$$ 
uniquely determine either of the three coefficients $p$, $q$ and $\rho$, when the two others are known. Indeed, one can check by using the Liouville transformation $y(x)=L^{-1} \int_0^x p^{-1 \slash 2}(t) \rho^{1 \slash 2}(t) dt$, $x \in (0,1)$, where $L=\int_0^1 p^{-1 \slash 2}(t) \rho^{1 \slash 2}(t) dt$, as a coordinate transformation, that the equation $-(pu')'+qu-\lambda \rho u=0$ in $(0,1)$ reduces to its normal form $-u''+Vu=\lambda u$ in $(0,1)$, where $V=V_{p,q,\rho}$ is expressed in terms of $p$, $q$ and $\rho$, while the boundary spectral data is preserved, i.e. $$\bsd(1,V,1)=\bsd(p,q,\rho).$$ Thus, Theorem \ref{thm-GL} yields recovery of $V$ from $\bsd(p,q,\rho)$, hence the result. Notice that this change of coordinates is no longer valid for discontinuous $p$ and $\rho$. We refer the reader to \cite{Ca} for a specific treatment of this problem. 
\end{remark}

All the approaches from G. Borg, N. Levinson or I. M. Gel'fand and B. M. Levitan, were based on highly one-dimensional techniques, but two great ideas emerged in the 80's that paved the way toward solving the multidimensional Borg-Levinson inverse spectral problem. The first one is called the C-property, see \cite{R}. It is due to A. G. Ramm who showed that the set $\{ u_1(\cdot,\lambda) u_2(\cdot,\lambda),\ \lambda \in (0,+\infty) \}$ is dense in $L^1(0,1)$. The second one is called the boundary control method, see \cite{B}. It was established by M. I. Belishev upon applying the boundary controllability theory to the hyperbolic equation associated with the Sturm-Liouville system \eqref{sl}. One common nice feature of these two great ideas is that they apply to higher dimensions $d \geq 2$ as well.


\subsection{Multidimensional identification results}

\subsubsection{Boundary spectral data}
Let us recall that $\{ \lambda_n,\ n \in \N \}$ is the non-decreasing sequence of the eigenvalues of $A_q$ (repeated with the multiplicity), that $\{ \varphi_n,\ n \in \N \}$ is a $L^2(\Omega)$-orthonormal basis of eigenfunctions of $A_q$ such that $A_q \varphi_n = \lambda_n \varphi_n$, and that $\psi_n = \pd_\nu \varphi_n$. We define the boundary spectral data (BSD) of $A_q$, or the BSD associated with $q$, as:
$$ \bsd(q) := \{ (\lambda_n, \psi_n),\ n \in \N \}. $$

\begin{remark}
For all $n \in \N$, one may replace $\varphi_n$ by $e^{i \theta_n} \varphi_n$ with $\theta_n \in \R$, in the above definition. Thus it is clear that the BSD are not defined in a unique way: they depend on the choice of the $L^2(\Omega)$-orthonormal basis $\{ \varphi_n,\ n \in \N\}$ of eigenfunctions of $A_q$.
\end{remark}


\subsubsection{Multidimensional identifiability}

In 1988, it was proved for $d \geq 2$ by A. Nachman, J. Sylvester and G. Uhlmann in \cite{NSU}, and independently by R. Novikov in \cite{N}, that the potential $q$ is uniquely determined by $\bsd(q)$, i.e. that the following implication
$$ ( \bsd(q_1) = \bsd(q_2) ) \Longrightarrow (q_1=q_2), $$
holds for any two suitable potentials $q_j$, $j=1,2$. This result has been improved in several ways by various authors. 

Firstly, H. Isozaki  \cite{I} (see also M. Choulli \cite{C}) extended the result of \cite{NSU, N} when finitely many eigenpairs remain unknown.

\begin{theorem}
\label{thm-BLmd1}
For $j=1,2$, let $q_j \in L^\infty(\Omega,\R)$ and write\footnote{That is to say that 
$\{ \lambda_{j,n},\ n \in \N \}$ is the non-decreasing sequence of the eigenvalues of $A_{q_j}$ and that $\psi_{j,n}=\pd_\nu \varphi_{j,n}$ for all $n \in \N$, where $\{ \varphi_{j,n},\ n \in \N \}$ is a $L^2(\Omega)$-orthonormal basis of eigenvectors of $A_{q_j}$ such that $A_{q_j} \varphi_{j,n} =  \lambda_{j,n} \varphi_{j,n}$.} 
$\bsd(q_j)=\{ (\lambda_{j,n},\psi_{j,n}),\ n \in \N \}$. Then, for all $N \in \N$, we have the following implication: 
$$ \left( (\lambda_{1,n},\psi_{1,n}) =  (\lambda_{2,n},\psi_{2,n}),\ n \geq N \right) \Longrightarrow ( q_1 = q_2 ). $$ 
\end{theorem}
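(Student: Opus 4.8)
The plan is to reduce the statement, via the Dirichlet--to--Neumann map, to the Calderón-type uniqueness method of Sylvester--Uhlmann, exploiting the fact that the $N-1$ unknown eigenpairs affect only a finite-rank piece of that map, which can be rendered negligible by sending the spectral parameter to $+\infty$.

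First I would introduce the Dirichlet--to--Neumann map. For $\lambda$ in the open unbounded set $\R\setminus(\Sp(A_{q_1})\cup\Sp(A_{q_2}))$ and $f\in H^{3/2}(\pd\Omega)$, let $u_{q_j}(\lambda,f)\in H^2(\Omega)$ solve $(-\Delta+q_j-\lambda)u=0$ in $\Omega$ with $u_{|\pd\Omega}=f$, and set $\Lambda_{q_j}(\lambda)f:=(\pd_\nu u_{q_j}(\lambda,f))_{|\pd\Omega}$. Expanding $u$ in the eigenbasis $\{\varphi_{j,n}\}$ and using Green's formula, one obtains the spectral representation $\Lambda_{q_j}(\lambda)f=\sum_n (\lambda-\lambda_{j,n})^{-1}\langle f,\psi_{j,n}\rangle_{L^2(\pd\Omega)}\,\psi_{j,n}$ (understood in a weak sense on $\pd\Omega$; the series does not converge in operator norm, so some care is needed, but the partial sums converge since $u$ is a genuine solution). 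The decisive gain is that in the difference every term with $n\ge N$ cancels by hypothesis, so
\[ D(\lambda):=\Lambda_{q_1}(\lambda)-\Lambda_{q_2}(\lambda)=\sum_{n=1}^{N-1}\left(\frac{\langle\,\cdot\,,\psi_{1,n}\rangle\,\psi_{1,n}}{\lambda-\lambda_{1,n}}-\frac{\langle\,\cdot\,,\psi_{2,n}\rangle\,\psi_{2,n}}{\lambda-\lambda_{2,n}}\right) \]
is a genuinely finite-rank operator built from the finitely many fixed functions $\psi_{j,n}\in L^2(\pd\Omega)$, $1\le n\le N-1$; in particular $\|D(\lambda)\|_{\mathcal L(L^2(\pd\Omega))}\le C|\lambda|^{-1}$ as $\lambda\to+\infty$. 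Next, for $u_j\in H^2(\Omega)$ solving $(-\Delta+q_j-\lambda)u_j=0$ with traces $f_j:=(u_j)_{|\pd\Omega}$, introducing the $q_2$-solution $v$ with $v_{|\pd\Omega}=f_1$ and setting $w:=u_1-v\in H^2(\Omega)\cap H_0^1(\Omega)$, Green's formula gives the Alessandrini-type identity $\int_\Omega(q_2-q_1)u_1u_2\,dx=-\int_{\pd\Omega}(D(\lambda)f_1)\,f_2\,dS$.

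Now fix $\xi\in\R^d$ and let $\lambda\to+\infty$ through $\R\setminus(\Sp(A_{q_1})\cup\Sp(A_{q_2}))$. I would use complex geometrical optics solutions with \emph{purely imaginary} phases, which — unlike the classical Sylvester--Uhlmann ones — have \emph{bounded} boundary traces. Write $\lambda=|c_1|^2=|c_2|^2$ with $c_1=\xi/2+\omega$, $c_2=\xi/2-\omega$, $\omega\cdot\xi=0$, $|\omega|^2=\lambda-|\xi|^2/4$ (possible for $d\ge 2$), extend $q_j$ by zero to $\R^d$, and look for $u_j=e^{ic_j\cdot x}(1+\psi_j)$, $c_j\in\R^d$, solving $(-\Delta+q_j-\lambda)u_j=0$ in $\R^d$. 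Then $\psi_j$ solves $(-\Delta-2ic_j\cdot\nabla-i0)\psi_j=-q_j(1+\psi_j)$; since $-\Delta-2ic_j\cdot\nabla=e^{-ic_j\cdot x}(-\Delta-\lambda)e^{ic_j\cdot x}$, the high-energy limiting absorption estimate $\|(-\Delta-\lambda-i0)^{-1}\|_{\mathcal L(L^2_\delta(\R^d),\,L^2_{-\delta}(\R^d))}\lesssim\lambda^{-1/2}$ ($\delta>1/2$, weighted $L^2$-spaces) permits solving for $\psi_j$ by a Neumann series with $\|\psi_j\|_{L^2_{-\delta}(\R^d)}\lesssim M\lambda^{-1/2}\to0$, while elliptic regularity yields $\psi_j\in H^2(\Omega)$ with $\|\psi_j\|_{H^1(\Omega)}$ bounded uniformly in $\lambda$. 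Hence $\|f_j\|_{L^2(\pd\Omega)}$ stays bounded — split off the factor $e^{ic_j\cdot x}$, whose trace has $L^2(\pd\Omega)$-norm $|\pd\Omega|^{1/2}$, and bound the trace of $e^{ic_j\cdot x}\psi_j$ by its bounded $H^1(\Omega)$-norm — and $u_1u_2=e^{i\xi\cdot x}(1+r_\lambda)$ with $\|r_\lambda\|_{L^1(\Omega)}\to0$. Feeding these into the identity, the right-hand side is controlled by $\|D(\lambda)\|_{\mathcal L(L^2(\pd\Omega))}\,\|f_1\|_{L^2(\pd\Omega)}\,\|f_2\|_{L^2(\pd\Omega)}\lesssim C|\lambda|^{-1}\to0$, while the left-hand side tends to $\int_\Omega(q_2-q_1)(x)e^{i\xi\cdot x}\,dx$; so the Fourier transform of the extension by zero of $q_2-q_1$ vanishes at $-\xi$ for every $\xi\in\R^d$, i.e. $q_1=q_2$ a.e. in $\Omega$.

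The main obstacle — and the place where the \emph{finiteness} of the missing data is essential — is exactly this construction. With the usual Sylvester--Uhlmann scheme the CGO boundary data would grow like $e^{c\sqrt{|\lambda|}}$, which the $|\lambda|^{-1}$ decay of $D(\lambda)$ could never absorb; the resolution is to push $\lambda$ to $+\infty$ while keeping the CGO phases purely imaginary, so that $\zeta_j\cdot\zeta_j=-|\zeta_j|^2=-\lambda<0$, the exponentials merely oscillate, the traces remain bounded, and the smallness of the remainders comes instead from the $\lambda^{-1/2}$ high-energy decay of the free resolvent. (A secondary, purely technical, point is to make the weakly convergent spectral representation of $\Lambda_{q_j}(\lambda)$ rigorous in the first step; for the difference $D(\lambda)$ this is harmless, since only $N-1$ terms survive.) Note that this route, passing through energies $\lambda>0$ rather than $\zeta\cdot\zeta=0$, also sidesteps the usual two-dimensional complication and works uniformly for all $d\ge2$.
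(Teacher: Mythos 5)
Your overall architecture coincides with the paper's (and with Isozaki's original scheme): pair the difference of Dirichlet-to-Neumann maps at high energy against solutions with uniformly bounded boundary traces, observe that under the hypothesis this difference is finite rank with operator norm $O(|\lambda|^{-1})$, and conclude that the Fourier transform of the zero-extension of $q_1-q_2$ vanishes. Where you genuinely diverge is in the construction of the probing solutions. The paper takes \emph{complex} energies $\lambda_\tau^\pm=(\tau\pm i)^2$ and the explicit exponentials $f_\tau^\pm=e^{i(\tau\pm i)\eta_\tau^\pm\cdot x}$, which solve the free equation exactly and stay bounded on $\overline\Omega$; the correction inside $\Omega$ is then controlled by the trivial self-adjoint resolvent bound $\|(A_{q_j}-\lambda_\tau^\pm)^{-1}\|\leq(2\tau)^{-1}$, so no machinery beyond the spectral theorem is needed. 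You instead keep $\lambda$ real and build genuine CGO solutions $e^{ic_j\cdot x}(1+\psi_j)$ via the outgoing free resolvent, which requires importing the Agmon high-energy limiting absorption estimates $\|(-\Delta-\lambda-i0)^{-1}\|_{L^2_\delta\to L^2_{-\delta}}\lesssim\lambda^{-1/2}$ and $\|\nabla(-\Delta-\lambda-i0)^{-1}\|_{L^2_\delta\to L^2_{-\delta}}\lesssim 1$, plus a Neumann series and interior elliptic regularity. Your estimates do close (in particular $|c_j|\,\|\psi_j\|_{L^2(\Omega)}=O(1)$ is what keeps $\|e^{ic_j\cdot x}\psi_j\|_{H^1(\Omega)}$ bounded, hence the traces bounded), and your route has the merit of avoiding complex frequencies; but it is strictly heavier than the paper's, which buys the same conclusion with elementary resolvent bounds.

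The one place you are too quick is the identity $D(\lambda)=\sum_{n=1}^{N-1}(\cdots)$. Neither individual series $\sum_n(\lambda-\lambda_{j,n})^{-1}\langle f,\psi_{j,n}\rangle\psi_{j,n}$ converges in $H^{1/2}(\pd\Omega)$ (the eigenfunction series for $u_\lambda$ converges only in $L^2(\Omega)$, not in $H^2(\Omega)$, so one cannot take normal derivatives term by term), hence ``subtract and cancel the tails'' is not yet an argument. This is precisely where the paper spends its Lemmas \ref{lm2} and \ref{lm3}: it introduces an auxiliary real parameter $\mu\to-\infty$, shows that $\pd_\nu(u_\lambda-u_\mu)$ \emph{does} admit an $H^{1/2}(\pd\Omega)$-convergent series, and that $\pd_\nu u_{1,\mu}-\pd_\nu u_{2,\mu}\to 0$ in $L^2(\pd\Omega)$ as $\mu\to-\infty$, which together legitimize the finite-rank formula. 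You flag this as a ``secondary, purely technical point,'' which slightly understates it — it is a real step, not a formality — but it is exactly fixable by the regularization just described, so I regard your proposal as correct once that step is supplied.
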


Recently, uniqueness in the determination of $q$ was proved in \cite{CS,KKS} from the knowledge of the asymptotic behavior of $\bsd(q)$ when $n \to +\infty$. 

\begin{theorem}
\label{thm-BLmd2}
Let $q_j$ for $j=1,2$, and the notations, be the same as in Theorem \ref{thm-BLmd1}. Assume that the asymptotics of $\bsd(q_1)$ and $\bsd(q_2)$ coincide, in the sense that
$$ \lim_{n \to \infty} (\lambda_{1,n} -\lambda_{2,n}) =0\ \mbox{and}\ \sum_{n=1}^{+\infty} \norm{\psi_{1,n}-\psi_{2,n}}_{L^2(\pd \Omega)}^2 < \infty. $$
Then, we have $q_1=q_2$ in $\Omega$.
\end{theorem}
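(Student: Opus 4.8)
The plan is to prove $q_1=q_2$ by showing that the Fourier transform of $q_1-q_2$ (extended by zero outside $\Omega$) vanishes. I would extract this from an Alessandrini-type boundary identity tested with complex geometrical optics (CGO) solutions at a \emph{complex} spectral parameter $z=z_r:=r+\mathrm{i}\tau$, with $\tau\neq0$ fixed and $r\to+\infty$: as $r$ grows, $z_r$ is driven toward the real spectra of $A_{q_1}$ and $A_{q_2}$, and it is precisely in this regime that the two asymptotic hypotheses are used — they tame the ``resonant'' eigenvalues $\lambda_{j,n}\approx r$, which are the only thing preventing the relevant boundary quantities from converging.

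Set $R_{q_j}(z):=(A_{q_j}-z)^{-1}$. For $z\in\C\setminus\R$ and $f\in H^{3\slash 2}(\pd\Omega)$, the unique $H^2(\Omega)$-solution $u$ of $(-\Delta+q_j-z)u=0$, $u_{|\pd\Omega}=f$, is
\[
P_{q_j}(z)f:=\sum_n\frac{\langle f,\psi_{j,n}\rangle_{L^2(\pd\Omega)}}{z-\lambda_{j,n}}\,\varphi_{j,n},
\]
as one sees by testing the equation against each $\varphi_{j,n}$ (which vanishes on $\pd\Omega$); hence the Dirichlet-to-Neumann map $\Lambda_{q_j}(z)f:=\pd_\nu u_{|\pd\Omega}$ equals $\sum_n(z-\lambda_{j,n})^{-1}\langle f,\psi_{j,n}\rangle_{L^2(\pd\Omega)}\,\psi_{j,n}$. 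Taking the $\varphi_{j,n}$ real-valued (possible since $q_j$ is real), $\Lambda_{q_j}(z)$ is symmetric for the bilinear pairing on $\pd\Omega$, so Green's formula gives, for CGO solutions $u_j$ of $(-\Delta+q_j-z)u_j=0$ with traces $f_j:=u_{j|\pd\Omega}$,
\[
\int_\Omega(q_1-q_2)\,u_1u_2\,dx=\int_{\pd\Omega}\bigl(f_2\,\pd_\nu u_1-f_1\,\pd_\nu u_2\bigr)\,ds=\int_{\pd\Omega}f_2\,\bigl(\Lambda_{q_1}(z)-\Lambda_{q_2}(z)\bigr)f_1\,ds .
\]
The decisive auxiliary estimate is the \emph{uniform} bound
\[
\bigl\|P_{q_j}(z)f_i\bigr\|_{L^2(\Omega)}^2=\sum_n\frac{\bigl|\langle f_i,\psi_{j,n}\rangle_{L^2(\pd\Omega)}\bigr|^2}{|z-\lambda_{j,n}|^2}\leq C\qquad(i,j\in\{1,2\}),
\]
valid as $r=\mathrm{Re}\,z\to+\infty$ with the traces of the CGO solutions below: for the matching index $P_{q_j}(z)f_j=u_j$, whose $L^2(\Omega)$-norm is bounded because $|\mathrm{Re}\,\zeta_j|\to0$ and the remainder is small; for the crossed index one writes $P_{q_j}(z)=P_{q_i}(z)+R_{q_j}(z)(q_i-q_j)P_{q_i}(z)$ and uses $\|R_{q_j}(z)\|_{L^2(\Omega)\to L^2(\Omega)}\leq\dist(z,\Sp(A_{q_j}))^{-1}\leq|\tau|^{-1}$ together with $\|q_1-q_2\|_{L^\infty(\Omega)}\leq2M$.

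Fix $\xi\in\R^d\setminus\{0\}$. I would choose $\zeta_1,\zeta_2\in\C^d$ with $\zeta_j\cdot\zeta_j=-z_r$ and $\zeta_1+\zeta_2=\mathrm{i}\xi$: the relation $|\mathrm{Re}\,\zeta_j|^2-|\mathrm{Im}\,\zeta_j|^2=-r$ forces $|\mathrm{Im}\,\zeta_j|\to\infty$ but permits $|\mathrm{Re}\,\zeta_j|\asymp|\tau|\,r^{-1\slash 2}\to0$ (this uses $d\geq2$). Because $e^{x\cdot\zeta_j}$ solves $(-\Delta-z_r)(\cdot)=0$, the equation for the CGO remainder $\psi_{j,r}$ in $u_j=e^{x\cdot\zeta_j}(1+\psi_{j,r})\in H^2(\Omega)$ involves only the bounded potential $q_j$, so the Sylvester--Uhlmann estimate gives $\|\psi_{j,r}\|_{L^2(\Omega)}\lesssim M\,|\zeta_j|^{-1}\to0$ uniformly, while $|\mathrm{Re}\,\zeta_j|\to0$ keeps $\|u_j\|_{L^2(\Omega)}$ and $\|f_j\|_{L^2(\pd\Omega)}$ bounded. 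As $r\to+\infty$, $u_1u_2=e^{\mathrm{i}x\cdot\xi}(1+\psi_{1,r}+\psi_{2,r}+\psi_{1,r}\psi_{2,r})$, so the left-hand side of the boundary identity tends to $\widehat{q_1-q_2}(-\xi)$. For the right-hand side I would insert the spectral representation, pair the $n$-th terms of $\Lambda_{q_1}(z_r)$ and $\Lambda_{q_2}(z_r)$, and split each pair (inner products in $L^2(\pd\Omega)$) as
\[
\frac{\langle f_1,\psi_{1,n}\rangle\,\langle f_2,\psi_{1,n}\rangle-\langle f_1,\psi_{2,n}\rangle\,\langle f_2,\psi_{2,n}\rangle}{z_r-\lambda_{1,n}}\;+\;\langle f_1,\psi_{2,n}\rangle\,\langle f_2,\psi_{2,n}\rangle\,\frac{\lambda_{1,n}-\lambda_{2,n}}{(z_r-\lambda_{1,n})(z_r-\lambda_{2,n})} .
\]
Writing the first numerator with a factor $\psi_{1,n}-\psi_{2,n}$, bounding $|\langle f_i,\psi_{1,n}-\psi_{2,n}\rangle|\leq\|f_i\|_{L^2(\pd\Omega)}\|\psi_{1,n}-\psi_{2,n}\|_{L^2(\pd\Omega)}$, and using Cauchy--Schwarz, the uniform bound above, and $\lambda_{1,n}-\lambda_{2,n}\to0$ to compare denominators where needed, the sum of the first terms is $\leq$ (uniform constant)$\times\bigl(\sum_n\|\psi_{1,n}-\psi_{2,n}\|_{L^2(\pd\Omega)}^2\bigr)^{1\slash 2}$ with tail over $n>N$ bounded by (uniform constant)$\times\bigl(\sum_{n>N}\|\psi_{1,n}-\psi_{2,n}\|_{L^2(\pd\Omega)}^2\bigr)^{1\slash 2}$, while the sum of the second terms is $\leq$ (uniform constant)$\times\sup_{n>N}|\lambda_{1,n}-\lambda_{2,n}|$, up to the first $N$ terms. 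Since $\sum_n\|\psi_{1,n}-\psi_{2,n}\|_{L^2(\pd\Omega)}^2<\infty$ and $\lambda_{1,n}-\lambda_{2,n}\to0$ (and each of the finitely many remaining terms is $O(1\slash r)$), letting $r\to+\infty$ and then $N\to+\infty$ forces the right-hand side to $0$. Hence $\widehat{q_1-q_2}(-\xi)=0$ for all $\xi\neq0$, so $q_1=q_2$ in $\Omega$.

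The step I expect to be the main obstacle is precisely the last one: making the vanishing of the boundary term genuinely uniform with respect to the $r$-dependent CGO traces — i.e. verifying that the resonant indices $\lambda_{j,n}\approx r$ (which keep $\Lambda_{q_j}(z_r)$ itself from having any limit) are neutralized by the two hypotheses while the traces $f_j$ remain non-degenerate. This rests on the uniform Poisson-operator bound obtained via the resolvent identity, and on invoking the eigenvalue-difference and Neumann-data-difference assumptions only through Cauchy--Schwarz and a tail-plus-remainder splitting, never through the individual operators $\Lambda_{q_j}(z_r)$. A minor book-keeping point is that $A_{q_1}$ and $A_{q_2}$ need not share eigenvalue multiplicities, so the term-by-term pairing above is \emph{a priori} meaningful only beyond some rank $N_0$; but the eigenvalue asymptotics force the two counting functions to agree up to a bounded shift, which suffices (and, as in Theorem \ref{thm-BLmd1}, perturbing finitely many low modes does not affect the conclusion).
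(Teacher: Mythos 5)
Your overall architecture --- a Green/Alessandrini identity tested with oscillating solutions, a spectral decomposition of the boundary pairing split into an eigenvalue-difference part and a Neumann-data-difference part, and a tail-plus-remainder estimate via Cauchy--Schwarz against uniform $L^2(\Omega)$ bounds on the Poisson operator --- is precisely the skeleton of the paper's proof (Proposition \ref{pr-iso} together with Subsections \ref{sec-an}--\ref{sec-3s}). The gap lies in your choice of spectral parameter. The paper takes $\lambda_\tau^+=(\tau+i)^2$, whose \emph{imaginary part $2\tau$ grows}: the test functions $f_\tau^\pm=e^{i(\tau\pm i)\eta_\tau^\pm\cdot x}$ solve the free equation exactly, stay uniformly bounded on $\overline{\Omega}$, and satisfy $f_\tau^+\overline{f_\tau^-}\to e^{-i\xi\cdot x}$, while the remainder $v_{j,\lambda_\tau^+}^+=-(A_{q_j}-\lambda_\tau^+)^{-1}q_jf_\tau^+$ is $O(\tau^{-1})$ by nothing more than the self-adjoint resolvent bound $\dist(\lambda_\tau^+,\R)^{-1}=(2\tau)^{-1}$. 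You instead fix $\mathrm{Im}\,z_r=\tau$ and send $\mathrm{Re}\,z_r=r\to+\infty$, so the trivial resolvent bound only gives a remainder of size $O(|\tau|^{-1})$, bounded but not vanishing; to get decay you invoke a Sylvester--Uhlmann estimate $\norm{\psi_{j,r}}_{L^2(\Omega)}\lesssim|\zeta_j|^{-1}$. That estimate is established for $|\mathrm{Re}\,\zeta|\asymp|\mathrm{Im}\,\zeta|$ with $\zeta\cdot\zeta$ fixed. In your degenerate regime ($|\mathrm{Re}\,\zeta_j|\asymp|\tau|r^{-1/2}\to0$, $|\mathrm{Im}\,\zeta_j|\asymp r^{1/2}\to\infty$, with $|\mathrm{Re}\,\zeta_j|\,|\mathrm{Im}\,\zeta_j|$ bounded) the characteristic variety of $-\Delta-2\zeta_j\cdot\nabla$ is a near-tangential intersection of a sphere and a hyperplane, where the gradients of the real and imaginary parts of the symbol become parallel, and the claimed $O(r^{-1/2})$ bound is unjustified --- indeed, if such a bound were available, Isozaki's quadratic parametrization would be pointless. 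Without it, $u_1u_2$ does not converge to $e^{ix\cdot\xi}$ in $L^1(\Omega)$ and the left-hand side of your identity does not tend to $\widehat{q_1-q_2}(-\xi)$.

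A second, repairable gap: you write $\Lambda_{q_j}(z)f=\sum_n(z-\lambda_{j,n})^{-1}\langle f,\psi_{j,n}\rangle_{L^2(\pd\Omega)}\psi_{j,n}$ and then pair and rearrange the two series term by term. As the paper points out after Lemma \ref{lm1}, the eigenfunction expansion of $u_z$ converges only in $L^2(\Omega)$, not in $H^2(\Omega)$, so this series for a single Neumann trace is not known to converge in any boundary space (note $\norm{\psi_{j,n}}_{L^2(\pd\Omega)}$ grows like $\lambda_{j,n}$); Lemmas \ref{lm2} and \ref{lm3}, with the auxiliary parameter $\mu\to-\infty$, exist precisely to legitimize this representation for the \emph{difference} of Neumann data before any term-by-term manipulation. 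By contrast, your closing worry about multiplicities is a non-issue: both eigenvalue sequences are indexed with multiplicity and paired by index $n$, exactly as in the hypothesis $\lambda_{1,n}-\lambda_{2,n}\to0$, so no matching of eigenspaces is required. If you replace $z_r=r+i\tau$ by $(\tau+i)^2$ and the CGO ansatz by the explicit exponentials $f_\tau^\pm$, and regularize the spectral series via the $\mu\to-\infty$ device, your estimation of the boundary term goes through and essentially reproduces the paper's argument.
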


The multidimensional Borg-Levinson theorem has been studied in many different kinds of settings\footnote{Such as operators in the divergence form, see e.g. B. Canuto and O. Kavian's paper \cite{CK2}, where two unknown coefficients out of three are simultaneously identified by the $\bsd$, or magnetic Schr\"odinger operators, see e.g. Y. Kian's article \cite{K}.} and it is not quite possible to give an extensive survey of this here, but we shall mention a few results which are relevant for the problem under investigation in this text. In \cite{PS}, L. P\"aiv\"arinta and V. Serov proved identifiability of unbounded potentials $q \in L^p(\Omega,\R)$ for $p > d \slash 2$, by $\bsd(q)$. The case of $p=d \slash 2$, $d \geq 3$, has been studied by V. Pohjola in \cite{P}.  As for Borg-Levinson inverse spectral theory with partial Neumann data, we refer the reader to M. Bellassoued, M. Choulli and M. Yamamoto's article \cite{BCY}, where a log-stability estimate for electric potentials which are known in a neighborhood of the boundary, is established with respect to the BSD measured on an arbitrary non-empty open subset of the boundary\footnote{The strategy that is used in this paper is quite the opposite of the one we apply in the last section of this text for solving parabolic inverse coefficient problems by means of the Borg-Levinson theorem, in the sense that the authors rather derive their spectral stability result from a hyperbolic stability inequality.}. 

\subsubsection{Outline}
The paper is organized as follows. In Section \ref{sec-BLth} we prove Theorems \ref{thm-BLmd1} and \ref{thm-BLmd2}. 
In Subsection \ref{sec-pre} we express the strong solution to the Dirichlet problem for $-\Delta +q - \lambda$, $\lambda \in \C \setminus \Sp (A_q)$, in terms of $\bsd(q)$. Subsection \ref{sec-Iso} contains the proof Isozaki's formula, which is useful for the derivation of Theorems \ref{thm-BLmd1} and \ref{thm-BLmd2}, presented in Subsections \ref{sec-prthm1} and \ref{sec-prthm2}, respectively.
In Subsection \ref{sec-stability} we examine the stability issue of the Borg-Levinson inverse problem under study.
Finally, we derive a parabolic identification result in Section \ref{sec-appl:par}, by means of Theorem \ref{thm-BLmd1}.


\section{Multidimensional Borg-Levinson theory}
\label{sec-BLth}
\setcounter{equation}{0} 
This section contains the proof of the incomplete Borg-Levinson theorem stated in Theorem \ref{thm-BLmd1} and the asymptotic Borg-Levinson theorem stated in Theorem \ref{thm-BLmd2}. We start by establishing several technical results that are needed by the derivation of Theorems \ref{thm-BLmd1} and \ref{thm-BLmd2}.


\subsection{Preliminaries}
\label{sec-pre}
For $q \in L^\infty(\Omega,\R)$, $f \in H^{3 \slash 2}(\pd \Omega)$ and $\lambda \in \C$, we consider the boundary value problem (BVP)
\bel{f1}
\left\{ \begin{array}{ll} 
(-\Delta + q - \lambda) u = 0 & \mbox{in}\ \Omega \\
u = f & \mbox{on}\ \pd \Omega. 
\end{array} \right.
\ee

First, we establish that there exists a unique strong solution\footnote{A strong solution to \eqref{f1} is a solution in $H^2(\Omega)$, which satisfies the equation a. e. in $\Omega$.} to the Cauchy problem \eqref{f1} that can be expressed in terms of $\bsd(q)$.

\begin{lemma}
\label{lm1}
Let $q \in L^\infty(\Omega,\R)$ and $f \in H^{3 \slash 2}(\pd \Omega)$. Then, for each $\lambda \in \C \setminus \Sp(A_q)$ there exists a unique solution $u_\lambda \in H^2(\Omega)$ to \eqref{f1}. Moreover $u_\lambda$ reads
\bel{f2}
u_\lambda = \sum_{n=1}^{+\infty} \frac{\langle f ,  \psi_n \rangle_{L^2(\pd \Omega)}}{\lambda-\lambda_n} \varphi_n\ \mbox{in}\ L^2(\Omega),
\ee
and we have
\bel{f3}
\lim_{\lambda \to -\infty} \norm{u_\lambda}_{L^2(\Omega)}^2 = \lim_{\lambda \to -\infty} \left( \sum_{n=1}^{+\infty} \abs{\frac{\langle f ,  \psi_n \rangle_{L^2(\pd \Omega)}}{\lambda-\lambda_n}}^2 \right)= 0. 
\ee
\end{lemma}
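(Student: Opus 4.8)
The plan is to establish existence and uniqueness first, then derive the series representation by the standard spectral-theoretic reduction to a homogeneous problem, and finally extract the $\lambda \to -\infty$ decay from dominated convergence.

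First I would reduce the inhomogeneous boundary condition to an inhomogeneous interior equation. Pick a lifting: since $f \in H^{3/2}(\pd\Omega)$ and $\pd\Omega \in C^{1,1}$, there is $F \in H^2(\Omega)$ with $F_{|\pd\Omega} = f$ and $\norm{F}_{H^2(\Omega)} \leq C \norm{f}_{H^{3/2}(\pd\Omega)}$. Writing $u_\lambda = F + w$, the BVP \eqref{f1} becomes $(-\Delta + q - \lambda) w = -(-\Delta + q - \lambda)F =: g \in L^2(\Omega)$ with $w \in H_0^1(\Omega)$, i.e. $(A_q - \lambda) w = g$. For $\lambda \in \C \setminus \Sp(A_q)$ the resolvent $(A_q - \lambda)^{-1}$ is a bounded operator from $L^2(\Omega)$ onto $D(A_q) = H_0^1(\Omega) \cap H^2(\Omega)$, so $w := (A_q-\lambda)^{-1} g$ exists, is unique in $H^2(\Omega)$, and hence $u_\lambda = F + w \in H^2(\Omega)$ is the unique strong solution. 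Uniqueness also follows directly: the difference of two solutions lies in $H_0^1(\Omega) \cap H^2(\Omega) = D(A_q)$ and solves $(A_q - \lambda)v = 0$, so $v = 0$ since $\lambda \notin \Sp(A_q)$.

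Next I would derive \eqref{f2}. Expand $w = \sum_n c_n \varphi_n$ in the orthonormal basis; from $(A_q - \lambda) w = g$ one gets $c_n = \langle g, \varphi_n \rangle / (\lambda_n - \lambda)$. The point is to rewrite $\langle g, \varphi_n \rangle$ purely in terms of $f$ and $\psi_n$. Using $g = -(-\Delta + q - \lambda)F$ and integrating by parts twice (Green's formula, valid since $F, \varphi_n \in H^2(\Omega)$ and $\varphi_n \in H_0^1(\Omega)$),
$$ \langle g, \varphi_n \rangle_{L^2(\Omega)} = -\int_\Omega (-\Delta + q - \lambda) F \, \overline{\varphi_n}\, dx = -\int_\Omega F\, \overline{(-\Delta + q - \lambda)\varphi_n}\, dx - \int_{\pd\Omega} f \, \overline{\pd_\nu \varphi_n}\, d\sigma + \int_{\pd\Omega} \pd_\nu F \, \overline{\varphi_n}\, d\sigma. $$
The second boundary term vanishes because $\varphi_n \in H_0^1(\Omega)$ has zero trace, and $(-\Delta+q-\lambda)\varphi_n = (\lambda_n - \lambda)\varphi_n$; since $\psi_n$ is real-valued one may drop the conjugate, giving $\langle g, \varphi_n \rangle = -(\lambda_n - \lambda)\langle F, \varphi_n\rangle - \langle f, \psi_n \rangle_{L^2(\pd\Omega)}$. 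Hence $c_n = \langle f, \psi_n \rangle_{L^2(\pd\Omega)}/(\lambda - \lambda_n) - \langle F, \varphi_n\rangle$, and adding back the expansion $F = \sum_n \langle F, \varphi_n\rangle \varphi_n$ yields $u_\lambda = F + w = \sum_n \frac{\langle f, \psi_n\rangle_{L^2(\pd\Omega)}}{\lambda - \lambda_n} \varphi_n$ in $L^2(\Omega)$, which is \eqref{f2}. Along the way one records $\ell_n := \langle f, \psi_n\rangle_{L^2(\pd\Omega)}$ and notes $\sum_n |\ell_n/(\lambda-\lambda_n)|^2 = \norm{u_\lambda}_{L^2(\Omega)}^2 < \infty$, which is the first equality in \eqref{f3}.

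Finally, for the limit \eqref{f3}: fix any $\lambda_0 \in \C \setminus \Sp(A_q)$; then $\sum_n |\ell_n/(\lambda_0 - \lambda_n)|^2 < \infty$. For real $\lambda < \lambda_1 - 1$ (so $\lambda < \lambda_n$ for all $n$) one has the pointwise bound $|\ell_n/(\lambda - \lambda_n)|^2 \to 0$ as $\lambda \to -\infty$ for each fixed $n$, and a uniform domination $|\ell_n/(\lambda-\lambda_n)|^2 \leq C |\ell_n/(\lambda_0-\lambda_n)|^2$ for $\lambda$ sufficiently negative, since $|\lambda - \lambda_n| \geq |\lambda_0 - \lambda_n|$ once $\lambda$ is far enough to the left of $\lambda_1$ (here one uses that $\{\lambda_n\}$ is bounded below and tends to $+\infty$, so $\inf_n |\lambda_0 - \lambda_n| > 0$ is not needed — rather one compares $|\lambda-\lambda_n|$ directly to $|\lambda_0-\lambda_n|$ for real $\lambda \ll 0$). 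By the dominated convergence theorem for series, $\sum_n |\ell_n/(\lambda-\lambda_n)|^2 \to 0$, which is the second limit, and combined with the first equality this gives $\norm{u_\lambda}_{L^2(\Omega)}^2 \to 0$.

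The main obstacle is the Green's-formula computation of $\langle g, \varphi_n\rangle$: one must justify the integration by parts at the $H^2$ regularity level and carefully track which boundary terms survive, so that $\langle f, \psi_n\rangle_{L^2(\pd\Omega)}$ appears with the correct sign and the $\langle F, \varphi_n\rangle$ contributions cancel exactly against the expansion of $F$. Everything else — existence via the resolvent, and the limit via dominated convergence — is routine once the representation \eqref{f2} is in hand.
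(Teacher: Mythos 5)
Your proposal is correct and follows essentially the same route as the paper: lifting $f$ to $F\in H^2(\Omega)$ and inverting $A_q-\lambda$ for existence and uniqueness, a Green's-formula computation to produce the coefficients $\langle f,\psi_n\rangle_{L^2(\pd\Omega)}/(\lambda-\lambda_n)$, and dominated convergence against the summable sequence at a fixed reference point $\lambda_0$ for the limit \eqref{f3}. The only cosmetic difference is that you integrate by parts against the lifting $F$ and cancel the $\langle F,\varphi_n\rangle$ terms, whereas the paper applies Green's formula directly to $\langle(-\Delta+q-\lambda)u_\lambda,\varphi_n\rangle=0$; and the paper simply takes the real reference point $\lambda_0=-(1+M)$, which makes your domination step immediate with constant $1$.
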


\begin{proof} 
We split the proof into three steps.\\

\noindent {\it Step 1: Existence and uniqueness of the solution to \eqref{f1}.}
Since $f \in H^{3 \slash 2}(\pd \Omega)$ and since the trace operator $\tau_0 : v \mapsto v_{| \pd \Omega}$ is surjective from $H^2(\Omega)$ onto $H^{3 \slash 2}(\pd \Omega)$, then there exists $F \in H^2(\Omega)$ such that $\tau_0 F = f$. Thus, $u_\lambda$ is a solution to \eqref{f1} iff $v_\lambda:=u_\lambda -F$ solves
\bel{f4}
\left\{ \begin{array}{ll} 
(-\Delta + q - \lambda) v = G & \mbox{in}\ \Omega \\
v = 0 & \mbox{on}\ \pd \Omega, 
\end{array} \right.
\ee
with $G:=-(-\Delta+q-\lambda)F \in L^2(\Omega) \in L^2(\Omega)$. Next, $\lambda$ being in the resolvent set of $A_q$, we see that \eqref{f4} admits a unique solution $v_\lambda = (A_q-\lambda)^{-1} G$. Thereofore, $u_\lambda=v_\lambda+ F$ is the unique solution to \eqref{f1}, and $u_\lambda \in H^2(\Omega)$.\\

\noindent {\it Step 2: Proof of \eqref{f2}.} For all $n \in \N$, we have
$$ 0 = \langle (-\Delta+q-\lambda) u_\lambda , \varphi_n \rangle_{L^2(\Omega)} = \int_{\Omega} (-\Delta+q(x)-\lambda) u_\lambda(x) \overline{\varphi_n(x)} dx, $$
whence
\beas
0 & = & -\int_{\pd \Omega} \pd_\nu u_\lambda(x) \overline{\varphi_n(x)} dx + \int_{\pd \Omega} u_\lambda(x) \overline{\psi_n(x)} dx + \int_{\Omega} u_\lambda(x) \overline{(A_q-\overline{\lambda}) \varphi_n(x)} dx \\
& = & \int_{\pd \Omega} f(x) \overline{\psi_n(x)} dx + (\lambda_n-\lambda) \int_{\Omega} u_\lambda(x)  \overline{\varphi_n(x)} dx,
\eeas
by integrating by parts. As a consequence we have
$\langle u_\lambda , \varphi_n \rangle_{L^2(\Omega)} = \frac{\langle f , \psi_n \rangle_{L^2(\pd \Omega)}}{\lambda-\lambda_n}$, so \eqref{f2} follows readily from this and the $L^2(\Omega)$-decomposition $u_\lambda=\sum_{n=1}^{+\infty} \langle u_\lambda , \varphi_n \rangle_{L^2(\Omega)} \varphi_n$.\\

\noindent {\it Step 3: Proof of \eqref{f3}.} With reference to \eqref{bb}-\eqref{i0}, we have for all $n \in \N$,
$$
\lambda_n = \langle A_q \varphi_n , \varphi_n \rangle_{L^2(\Omega)} = a_q(\varphi_n,\varphi_n)
= \int_{\Omega} \abs{\nabla \varphi_n(x)}^2 dx + \int_\Omega q(x) \abs{\varphi_n(x)}^2 dx
\geq - M, 
$$
hence $\Sp(A_q) \subset [-M,+\infty)$. Thus,we see that every $\lambda \in (-\infty,-(1+M)]$ lies in the resolvent set of $A_q$, and that
\bel{f4b}
\abs{\frac{\langle f , \psi_n \rangle_{L^2(\pd \Omega)}}{\lambda-\lambda_n}}^2 \leq \abs{\frac{\langle f , \psi_n \rangle_{L^2(\pd \Omega)}}{1+M+\lambda_n}}^2,\ n \in \N.
\ee
Further, since $\sum_{n=1}^{+\infty} \abs{\frac{\langle f , \psi_n \rangle_{L^2(\pd \Omega)}}{1+M+\lambda_n}}^2 = \norm{u_{-(1+M)}}_{L^2(\Omega)}^2 < \infty$,
by \eqref{f2} and the Parseval theorem, and since
$\lim_{\lambda \to -\infty} \abs{\frac{\langle f , \psi_n \rangle_{L^2(\pd \Omega)}}{\lambda-\lambda_n}}^2 = 0$ for all $n \in \N$, we infer from \eqref{f4b} and the Lebesgue dominated convergence theorem that
$$ \lim_{\lambda \to -\infty} \left(  \sum_{n=1}^{+\infty} 
\abs{\frac{\langle f , \psi_n \rangle_{L^2(\pd \Omega)}}{\lambda-\lambda_n}}^2 \right) = \sum_{n=1}^{+\infty} 
\left( \lim_{\lambda \to -\infty} \abs{\frac{\langle f , \psi_n \rangle_{L^2(\pd \Omega)}}{\lambda-\lambda_n}}^2 \right) = 0. $$
Putting this, together with \eqref{f2} and the Parseval formula, we obtain \eqref{f3}.
\end{proof} 

Notice that the series in \eqref{f2} converges in $L^2(\Omega)$ and not in $H^2(\Omega)$. Therefore, the normal derivative $\pd_\nu u_{\lambda}$ of the solution $u_\lambda$ to \eqref{f1} cannot be obtained directly from \eqref{f2}, by substituting $\psi_n$ for $\varphi_n$ in the right hand side. To achieve this, we need to introduce an additional specific spectral parameter $\mu$, and consider the difference $u_\lambda - u_\mu$, as follows.

\begin{lemma}
\label{lm2}
Let $q$ and $f$ be the same as in Lemma \ref{lm1}. Then, for all $\lambda$ and $\mu$ in $\C \setminus \Sp(A_q)$, we have
\bel{f5}
\pd_\nu (u_\lambda - u_\mu) = (\mu-\lambda) \sum_{n=1}^{+\infty} \frac{\langle f , \psi_n \rangle_{L^2(\pd \Omega)}}{(\lambda-\lambda_n)(\mu-\lambda_n)} \psi_n\ \mbox{in}\ H^{1 \slash 2}(\pd \Omega).
\ee
Here $u_\lambda$ (resp., $u_\mu$) denotes the $H^2(\Omega)$-solution to \eqref{f1} (resp., \eqref{f1} where $\lambda$ is replaced by $\mu$), given by Lemma \ref{f1}.
\end{lemma}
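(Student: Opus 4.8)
The plan is to realize the difference $w := u_\lambda - u_\mu$ as an explicit element of $D(A_q) = H_0^1(\Omega) \cap H^2(\Omega)$ whose expansion along the orthonormal basis $\{ \varphi_n,\ n \in \N \}$ is, up to the scalar factor $\mu - \lambda$, the series appearing in the right-hand side of \eqref{f5}; then, after checking that this expansion converges not merely in $L^2(\Omega)$ but in $H^2(\Omega)$, I would apply the continuous trace operator $\tau_1 : H^2(\Omega) \to H^{1/2}(\pd\Omega)$ term by term. To construct $w$: since $u_\lambda$ and $u_\mu$ are $H^2(\Omega)$-solutions of \eqref{f1} with the same boundary datum $f$, their difference lies in $H_0^1(\Omega) \cap H^2(\Omega) = D(A_q)$; subtracting the two equations and writing $(-\Delta + q - \lambda) u_\mu = (-\Delta + q - \mu) u_\mu + (\mu - \lambda) u_\mu = (\mu - \lambda) u_\mu$ gives $(A_q - \lambda) w = (\lambda - \mu) u_\mu$, hence $w = (\lambda - \mu)(A_q - \lambda)^{-1} u_\mu$ since $\lambda \notin \Sp(A_q)$. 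Inserting the $L^2(\Omega)$-expansion of $u_\mu$ from \eqref{f2}, using $(A_q - \lambda)^{-1} \varphi_n = (\lambda_n - \lambda)^{-1} \varphi_n$ together with the boundedness of $(A_q - \lambda)^{-1}$ on $L^2(\Omega)$, I get
\[ w = (\mu - \lambda) \sum_{n=1}^{+\infty} \frac{\langle f , \psi_n \rangle_{L^2(\pd \Omega)}}{(\lambda - \lambda_n)(\mu - \lambda_n)}\, \varphi_n \quad \text{in}\ L^2(\Omega), \]
so that $\langle w , \varphi_n \rangle_{L^2(\Omega)} = (\mu - \lambda)\langle f , \psi_n \rangle_{L^2(\pd \Omega)} / ((\lambda - \lambda_n)(\mu - \lambda_n))$ for all $n \in \N$.

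The key step is to upgrade this to $H^2(\Omega)$-convergence. Since $w \in D(A_q)$, the spectral theorem yields $A_q w = \sum_n \lambda_n \langle w , \varphi_n \rangle_{L^2(\Omega)} \varphi_n$ in $L^2(\Omega)$ with $\sum_n \lambda_n^2 \abs{\langle w , \varphi_n \rangle_{L^2(\Omega)}}^2 = \norm{A_q w}_{L^2(\Omega)}^2 < \infty$; hence the partial sums $w_N := (\mu - \lambda) \sum_{n=1}^N \langle f , \psi_n \rangle_{L^2(\pd \Omega)} \varphi_n / ((\lambda - \lambda_n)(\mu - \lambda_n))$ satisfy both $w_N \to w$ and $A_q w_N \to A_q w$ in $L^2(\Omega)$, so $w_N \to w$ in the graph norm of $A_q$, hence in $H^2(\Omega)$ by \eqref{i2}. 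The same summability can also be verified by hand: since $\lambda_n \to +\infty$ by \eqref{i2a} and $\lambda, \mu \notin \Sp(A_q) \subset [-M, +\infty)$, one has $\abs{\lambda_n} / \abs{(\lambda - \lambda_n)(\mu - \lambda_n)} \leq C (1 + M + \lambda_n)^{-1}$ for all $n$, with $C$ depending only on $\lambda, \mu, M$, so the series is dominated by $C^2 \abs{\mu - \lambda}^2 \sum_n \abs{\langle f , \psi_n \rangle_{L^2(\pd \Omega)}}^2 (1 + M + \lambda_n)^{-2} = C^2 \abs{\mu - \lambda}^2 \norm{u_{-(1+M)}}_{L^2(\Omega)}^2 < \infty$, exactly as in the proof of \eqref{f3}.

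Finally I would take the trace. As $\tau_1$ is bounded from $H^2(\Omega)$ into $H^{1/2}(\pd\Omega)$, the $H^2(\Omega)$-convergence $w_N \to w$ gives $\pd_\nu w_N = \tau_1 w_N \to \tau_1 w = \pd_\nu(u_\lambda - u_\mu)$ in $H^{1/2}(\pd\Omega)$; on the other hand each $w_N$ is a finite linear combination of the $\varphi_n$, so by linearity $\tau_1 w_N = (\mu - \lambda) \sum_{n=1}^N \langle f , \psi_n \rangle_{L^2(\pd \Omega)} \psi_n / ((\lambda - \lambda_n)(\mu - \lambda_n))$, and letting $N \to +\infty$ produces \eqref{f5}. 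The only genuinely delicate point is the middle step, namely legitimizing the term-by-term application of the trace by establishing $H^2(\Omega)$-convergence of the $\varphi_n$-expansion of $w$; once $w$ has been recognized as belonging to $D(A_q)$ and identified with $(\lambda-\mu)(A_q-\lambda)^{-1}u_\mu$, this reduces to the elementary decay estimate on $\lambda_n / ((\lambda-\lambda_n)(\mu-\lambda_n))$ recorded above.
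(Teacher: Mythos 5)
Your proposal is correct and follows essentially the same route as the paper: identify $u_\lambda-u_\mu$ with $(\lambda-\mu)(A_q-\lambda)^{-1}u_\mu$, insert the expansion \eqref{f2} of $u_\mu$, upgrade the $L^2(\Omega)$-convergence of the resulting eigenfunction series to graph-norm (hence, by \eqref{i2}, $H^2(\Omega)$) convergence using that the difference lies in $D(A_q)$, and conclude by continuity of the trace $\tau_1$. The only cosmetic difference is that the paper exhibits the series for $A_q v$ explicitly via $A_q v=(\lambda-\mu)u_\mu+\lambda v$, whereas you invoke the spectral theorem directly (and add a self-contained summability check); both arguments are equivalent.
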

\begin{proof}
In view of \eqref{f1}, we see that $v:=u_\lambda - u_\mu$ solves
\bel{f6}
\left\{ \begin{array}{ll} 
(-\Delta + q - \lambda) v = (\lambda-\mu) u_\mu & \mbox{in}\ \Omega \\
v =  0 & \mbox{on}\ \pd \Omega. 
\end{array} \right.
\ee
Since $\lambda$ is in the resolvent set of $A_q$, \eqref{f6} yields
\bel{f6b} 
v=(\lambda-\mu)(A_q-\lambda)^{-1} u_\mu = (\lambda-\mu) \sum_{n=1}^{+\infty} \frac{\langle u_\mu ,  \varphi_n \rangle_{L^2(\Omega)}}{\lambda_n-\lambda} \varphi_n,
\ee
the series being convergent in $L^2(\Omega)$. Recall that we have 
\bel{f6c}
u_\mu = \sum_{n=1}^{+\infty}  \frac{\langle f , \psi_n \rangle_{L^2(\pd \Omega)}}{\mu-\lambda_n} \varphi_n\ \mbox{in}\ L^2(\Omega),
\ee
upon substituting $\mu$ for $\lambda$ in \eqref{f2}. Putting this together with \eqref{f6b}, we get that
\bel{f7} 
v = (\lambda-\mu) \sum_{n=1}^{+\infty} \frac{\langle f ,  \psi_n \rangle_{L^2(\pd \Omega)}}{(\lambda_n-\lambda)(\mu-\lambda_n)} \varphi_n\ \mbox{in}\ L^2(\Omega),
\ee
Next, since $v \in D(A_q)$ and $A_q v= (\lambda-\mu) u_\mu + \lambda v$, we deduce from \eqref{f6c}-\eqref{f7} that
$$
A_q v 
=(\lambda-\mu) \sum_{n=1}^{+\infty} \frac{\lambda_n  \langle f ,  \psi_n \rangle_{L^2(\pd \Omega)}}{(\lambda_n-\lambda)(\mu-\lambda_n)} \varphi_n\ \mbox{in}\ L^2(\Omega).
$$
Therefore, the series in \eqref{f7} converges for the topology of the norm of $A_q$, hence it converges in $H^2(\Omega)$, according to \eqref{i2}. Finally, we obtain \eqref{f5} from this by invoking the continuity of the trace operator $\tau_1: u \mapsto (\pd_\nu u)_{| \pd \Omega}$ from $H^2(\Omega)$ into $H^{1 \slash 2}(\pd \Omega)$. 
\end{proof}

The next lemma claims for any two real-valued bounded potentials $q_1$ and $q_2$, that the solutions to \eqref{f1} associated with either $q=q_1$ or $q=q_2$, are closed as $\lambda \to -\infty$: in some sense the influence of the potential is dimmed when the spectral parameter $\lambda$ goes to $-\infty$.

\begin{lemma}
\label{lm3}
Let $f \in H^{3 \slash 2}(\pd \Omega)$ and let $q_j \in L^\infty(\Omega,\R)$, $j=1,2$. For $\lambda \in \C \setminus ( \Sp(A_{q_1}) \cup \Sp(A_{q_2}) )$, let $u_{j,\lambda}$ be the solution to \eqref{f1} where $q_j$ is substituted for $q$, which is given by Lemma \ref{lm1}. Then, we have
\bel{f7c}
\lim_{\lambda \to -\infty} \norm{\pd_\nu u_{1,\lambda}- \pd_\nu u_{2,\lambda}}_{L^2(\pd \Omega)} =0.
\ee
\end{lemma}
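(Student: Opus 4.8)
The plan is to reduce the difference $\pd_\nu u_{1,\lambda} - \pd_\nu u_{2,\lambda}$ to an expression involving only the resolvents of $A_{q_1}$ and $A_{q_2}$, and then exploit the decay of resolvents as $\lambda \to -\infty$. First I would fix a reference parameter, say $\mu = -(1+M)$ with $M$ a common bound for $\norm{q_j}_{L^\infty}$, so that $\mu$ lies in the resolvent set of both $A_{q_1}$ and $A_{q_2}$, and write $w_{j,\lambda} := u_{j,\lambda} - u_{j,\mu}$. Since both $u_{1,\mu}$ and $u_{2,\mu}$ solve a Dirichlet problem with the same boundary datum $f$ but different potentials, I would control $\pd_\nu(u_{1,\mu} - u_{2,\mu})$ separately; this is a fixed quantity (no $\lambda$), so it does not help with the limit directly, which suggests a cleaner route.

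The cleaner route: let $F \in H^2(\Omega)$ be a fixed lifting of $f$, so $u_{j,\lambda} = F + v_{j,\lambda}$ with $v_{j,\lambda} = (A_{q_j} - \lambda)^{-1} G_{j,\lambda}$ and $G_{j,\lambda} = -(-\Delta + q_j - \lambda) F = -(-\Delta + q_j)F + \lambda F$. Then $\pd_\nu u_{1,\lambda} - \pd_\nu u_{2,\lambda} = \pd_\nu v_{1,\lambda} - \pd_\nu v_{2,\lambda}$, and I would split
$$ v_{1,\lambda} - v_{2,\lambda} = \bigl( (A_{q_1}-\lambda)^{-1} - (A_{q_2}-\lambda)^{-1} \bigr) G_{1,\lambda} + (A_{q_2}-\lambda)^{-1}(G_{1,\lambda}-G_{2,\lambda}). $$
The second term has $G_{1,\lambda} - G_{2,\lambda} = -(q_1 - q_2) F$, which is independent of $\lambda$, a fixed $L^2$ function; the first term I would rewrite via the resolvent identity $(A_{q_1}-\lambda)^{-1} - (A_{q_2}-\lambda)^{-1} = (A_{q_2}-\lambda)^{-1}(q_2 - q_1)(A_{q_1}-\lambda)^{-1}$. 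The point is that every term ends up of the form $(A_{q_2}-\lambda)^{-1} h_\lambda$ where $h_\lambda$ stays bounded in $L^2(\Omega)$ — except that $G_{1,\lambda}$ itself grows linearly in $\lambda$, so I would instead not extract $G_{1,\lambda}$ but keep $v_{1,\lambda} = u_{1,\lambda} - F$ and use that $\norm{u_{1,\lambda}}_{L^2(\Omega)} \to 0$ by \eqref{f3} of Lemma \ref{lm1}, hence $v_{1,\lambda}$ is bounded in $L^2$. Concretely, $v_{1,\lambda} - v_{2,\lambda}$ solves $(-\Delta + q_2 - \lambda)(v_{1,\lambda}-v_{2,\lambda}) = (q_2 - q_1) u_{1,\lambda}$ with zero boundary data, so $v_{1,\lambda} - v_{2,\lambda} = (A_{q_2}-\lambda)^{-1}\bigl((q_2-q_1)u_{1,\lambda}\bigr)$.

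Now set $r_\lambda := (q_2 - q_1) u_{1,\lambda} \in L^2(\Omega)$; by Lemma \ref{lm1}, $\norm{r_\lambda}_{L^2(\Omega)} \leq 2M \norm{u_{1,\lambda}}_{L^2(\Omega)} \to 0$ as $\lambda \to -\infty$. Writing $z_\lambda := (A_{q_2}-\lambda)^{-1} r_\lambda \in D(A_{q_2})$, I need $\norm{\pd_\nu z_\lambda}_{L^2(\pd\Omega)} \to 0$. For this I would estimate $\norm{z_\lambda}_{H^2(\Omega)}$: using that the graph norm of $A_{q_2}$ dominates the $H^2$ norm (estimate \eqref{i2}), $\norm{z_\lambda}_{H^2(\Omega)} \leq c\bigl(\norm{z_\lambda}_{L^2(\Omega)} + \norm{A_{q_2} z_\lambda}_{L^2(\Omega)}\bigr)$. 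Since $A_{q_2} z_\lambda = r_\lambda + \lambda z_\lambda$, and spectral calculus gives $\norm{z_\lambda}_{L^2} \leq \norm{r_\lambda}_{L^2}/\dist(\lambda,\Sp(A_{q_2})) \to 0$ while $\norm{A_{q_2} z_\lambda}_{L^2} = \norm{A_{q_2}(A_{q_2}-\lambda)^{-1} r_\lambda}_{L^2} \leq (1 + |\lambda|/\dist(\lambda,\Sp A_{q_2}))\norm{r_\lambda}_{L^2}$, which is bounded by a constant times $\norm{r_\lambda}_{L^2}$ for $\lambda \leq -(1+M)$ (since then $\dist(\lambda,\Sp(A_{q_2})) \geq 1+M+\lambda_1 \geq |\lambda| - $ const, giving $|\lambda|/\dist \leq C$); hence $\norm{A_{q_2}z_\lambda}_{L^2} \to 0$. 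Therefore $\norm{z_\lambda}_{H^2(\Omega)} \to 0$, and the continuity of the trace $\tau_1: H^2(\Omega) \to H^{1/2}(\pd\Omega) \hookrightarrow L^2(\pd\Omega)$ yields \eqref{f7c}. The main obstacle I anticipate is the bookkeeping in the spectral-calculus estimate for $\norm{A_{q_2}z_\lambda}_{L^2}$: one must verify that $\sup_{t \in \Sp(A_{q_2})} |t|/|t-\lambda|$ stays bounded as $\lambda \to -\infty$, which works precisely because $\Sp(A_{q_2}) \subset [-M,\infty)$ is bounded below — so for $\lambda < -2M$, $|t - \lambda| \geq t - \lambda \geq \max(|t|-M, |\lambda|/2)$ and the ratio is controlled. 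Everything else is a routine assembly of Lemma \ref{lm1} and estimate \eqref{i2}.
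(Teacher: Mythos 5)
Your argument is correct and is essentially the paper's own proof: both write the difference $u_{1,\lambda}-u_{2,\lambda}$ as a resolvent applied to $(q_2-q_1)u_{j,\lambda}$ (you use $(A_{q_2}-\lambda)^{-1}$ with source $(q_2-q_1)u_{1,\lambda}$, the paper uses $(A_{q_1}-\lambda)^{-1}$ with source $(q_2-q_1)u_{2,\lambda}$, a purely symmetric choice), then invoke \eqref{f3}, the resolvent bound, the equivalence \eqref{i2} of the graph norm with the $H^2(\Omega)$-norm, and continuity of the trace $\tau_1$. Your spectral-calculus bound on $\norm{A_{q_2}z_\lambda}_{L^2(\Omega)}$ is just a repackaging of the paper's step showing $\lambda\norm{w_\lambda}_{L^2(\Omega)}\to 0$.
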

\begin{proof}
Set $w_\lambda:=u_{1,\lambda}-u_{2,\lambda}$, so we have
$$
\left\{ \begin{array}{ll} 
(-\Delta + q_1 - \lambda) w_\lambda = (q_2-q_1) u_{2,\lambda} & \mbox{in}\ \Omega \\
w_\lambda =  0 & \mbox{on}\ \pd \Omega,
\end{array} \right.
$$
from \eqref{f1}, and hence $w_\lambda=(A_{q_1}-\lambda)^{-1} (q_2-q_1) u_{2,\lambda}$. 
Bearing in mind for all real number $\lambda < -\norm{q_1}_{L^\infty(\Omega)}$, that $\norm{(A_{q_1}-\lambda)^{-1}}_{\cB(L^2(\Omega))} = \dist^{-1}(\lambda,\Sp(A_{q_1})) \leq 1 \slash (\norm{q_1}_{L^\infty(\Omega)}+\lambda)$, we find that
$$ \norm{w_\lambda}_{L^2(\Omega)} \leq \frac{\norm{q_2-q_1}_{L^\infty(\Omega)} \norm{u_{2,\lambda}}_{L^2(\Omega)}}{-\lambda-\norm{q_1}_{L^\infty(\Omega)}},\ \lambda \in \left( -\infty,-\norm{q_1}_{L^\infty(\Omega)} \right).$$
Here and in the remaining part of this text, $\cB(L^2(\Omega))$ denotes the space of linear bounded operators\footnote{The usual norm of $T \in \cB(L^2(\Omega))$ is defined by
$\norm{T}_{\cB(L^2(\Omega))} = \sup_{f \in L^2(\Omega) \setminus \{ 0 \}} \frac{\norm{T f}_{L^2(\Omega)}}{\norm{f}_{L^2(\Omega)}}$.} in $L^2(\Omega)$.
From this and \eqref{f3} it then follows that
\bel{f8}
\lim_{\lambda \to -\infty}  \lambda \norm{w_\lambda}_{L^2(\Omega)} =0.
\ee
Next, since  $A_{q_1} w_\lambda = (q_2-q_1) u_{2,\lambda} + \lambda w_\lambda$, it holds true for every $\lambda < -\norm{q_1}_{L^\infty(\Omega)}$ that
$$ \norm{A_{q_1} w_\lambda}_{L^2(\Omega)} \leq \norm{q_2-q_1}_{L^\infty(\Omega)} \norm{u_{2,\lambda}}_{L^2(\Omega)} - \lambda \norm{w_\lambda}_{L^2(\Omega)},$$
so we get $\lim_{\lambda \to -\infty} \norm{A_{q_1} w_\lambda}_{L^2(\Omega)} =0$, from \eqref{f3} and \eqref{f8}. As a consequence we have
$$ \lim_{\lambda \to -\infty} \left( \norm{w_\lambda}_{L^2(\Omega)} + \norm{A_{q_1} w_\lambda}_{L^2(\Omega)} \right) =0. $$
This and \eqref{i2} entail
$$
\lim_{\lambda \to -\infty} \norm{u_{1,\lambda}-u_{2,\lambda}}_{H^2(\Omega)} =0
$$
which together with the continuity of the trace operator $\tau_1: u \mapsto (\pd_\nu u)_{| \pd \Omega}$ from $H^2(\Omega)$ into $H^{1 \slash 2}(\pd \Omega)$, yield \eqref{f7c}.
\end{proof}


\subsection{Isozaki's asymptotic representation formula}
\label{sec-Iso}

Let $q_j \in L^\infty(\Omega,\R)$ satisfy
\bel{g0}
\norm{q_j}_{L^\infty(\Omega)} \leq M,\ j=1,2,
\ee
for some {\it a priori} fixed constant $M \in (0,+\infty)$. 
In \cite{I}, H. Isozaki gives a simple representation formula, expressing the difference $q_1-q_2$ in terms of the Dirichlet-to-Neumann (DN) operator associated with the BVP obtained by substituting $q_j$ for $q$ in \eqref{f1}.
More precisely, adapting the argument of \cite{I} to fit our aim in this text, we fix $\tau \in (1,+\infty)$ and we consider the BVP \eqref{f1} with $\lambda=\lambda_\tau^+:=(\tau+i)^2$ and $q=q_j$, i.e.
\bel{g1}
\left\{ \begin{array}{ll} (-\Delta + q_j - \lambda_\tau^+) u =0 & \mbox{in}\ \Omega \\ u = f & \mbox{on}\ \pd \Omega. \end{array}
\right.
\ee
We denote by $u_{j,\lambda_\tau^+}$ the $H^2(\Omega)$-solution to \eqref{g1} (for the sake of notational simplicity we drop the dependence of $u_{j,\lambda_\tau^+}$ on $f$). Let us introduce the DN map associated with \eqref{g1}, as
\bel{g2}
\begin{array}{cccc}
\Lambda_{j,\lambda_\tau^+} : & H^{3 \slash 2}(\pd \Omega) & \to & H^{1 \slash 2}(\pd \Omega) \\
& f & \mapsto & \left( \pd_\nu u_{j,\lambda_\tau^+} \right)_{| \pd \Omega}. \end{array}
\ee
Given two test functions $f_\tau^\pm$, we shall make precise below, we aim to link the difference $q_1-q_2$ to the asymptotic behavior of 
\bel{g3} 
S_{\tau} := S_{1,\tau}-S_{2,\tau},\ \mbox{where}\ S_{j,\tau}:= \langle \Lambda_{j,\lambda_\tau^+} f_\tau^+ , f_\tau^- \rangle_{L^2(\pd \Omega)},
\ee
as $\tau \to +\infty$.

\subsubsection{Test functions}
For $\xi \in \R^d$ fixed, and for every $\tau \in (\abs{\xi}+1,+\infty)$, we set $\lambda_\tau^\pm:=(\tau \pm i)^2$, and we seek two functions $f_\tau^\pm$ such that
\bel{g4}
\left\{ \begin{array}{l}
(-\Delta - \lambda_\tau^\pm ) f_\tau^\pm = 0\ \mbox{in}\ \Omega \\
\lim_{\tau \to +\infty} f_\tau^+(x) \overline{f_\tau^-(x)} = e^{-i \xi \cdot x},\ x \in \Omega \\
\sup_{\tau \in (\abs{\xi}+1,+\infty)} \norm{f_\tau^\pm}_{C(\overline{\Omega})} < \infty.
\end{array} \right.
\ee
Here and in the remaining part of this text, the notation $\cdot$ (resp., $\abs{\cdot}$) stands for the Euclidian product (resp., norm) in $\R^d$.

Pick $\eta \in \mathbb{S}^{d-1}$ such that $\xi \cdot \eta = 0$, and put
\bel{g5}
\beta_\tau := \sqrt{1- \frac{\abs{\xi}^2}{4 \tau^2}}\ \mbox{and}\ \eta_\tau^\pm := \beta_\tau  \eta \mp \frac{\xi}{2 \tau},
\ee
in such a way that $\abs{\eta_\tau^\pm}=1$. Then, the two following functions
\bel{g6}
f_\tau^\pm(x):= e^{i(\tau \pm i) \eta_\tau^\pm \cdot x}, \ x \in \Omega,
\ee
fulfill the conditions of \eqref{g4}. As a matter of fact, it can be checked through direct computation from \eqref{g5}-\eqref{g6}, that 
$\Delta f_\tau^\pm=-\lambda_\tau^\pm \abs{\eta_\tau^\pm}^2 f_\tau^\pm = -\lambda_\tau^\pm f_\tau^\pm$ in $\Omega$, that $f_\tau^+(x) \overline{f_\tau^-(x)} = e^{-i \frac{\tau+i}{\tau} \xi \cdot x}$ for all $x \in \Omega$, and that
\bel{g6b}
\abs{f_\tau^\pm(x)} \leq e^{\abs{\eta_\tau^\pm} \abs{x}}\leq e^{\abs{x}},\ x \in \overline{\Omega}. 
\ee
We notice for further use from \eqref{g6b} that the estimate 
\bel{g*}
\norm{f_\tau^\pm}_{L^p(X)}  \leq c_* := \left(1 + \abs{\Omega}^{1 \slash 2} + \abs{\pd \Omega}^{1 \slash 2} \right) \sup_{x \in \overline{\Omega}} e^{\abs{x}},
\ee
holds with $X=\Omega$ or $X=\pd \Omega$, and with $p=2$ or $p=\infty$. Here $\abs{\Omega}$ (resp., $\abs{\pd \Omega}$) denotes the diameter of $\Omega$ (resp., the length of $\pd \Omega$). 

\subsubsection{Probing \eqref{f1} with $f_\tau^\pm$}
\label{sec-star}
For $j=1,2$ and $z \in \C \setminus \Sp(A_{q_j})$, we denote by $u_{j,z}^\pm$ the $H^2(\Omega)$-solution to the BVP \eqref{f1}, where $(q_j,z,f_\tau^\pm)$ is substituted for $(q,\lambda,f)$. The function $u_{j,z}^\pm$ is characterized by
\bel{gg0}
\left\{ \begin{array}{ll} 
(-\Delta + q_j - z) u_{j,z}^\pm = 0 & \mbox{in}\ \Omega \\ u_{j,z}^\pm = f_\tau^\pm & \mbox{on}\ \pd \Omega, \end{array}
\right.
\ee
hence $v_{j,z}^\pm := u_{j,z}^\pm - f_\tau^\pm$ solves
\bel{gg1}
\left\{ \begin{array}{ll} 
(-\Delta + q_j - z) v_{j,z}^\pm = -(-\Delta + q_j - z) f_\tau^{\pm}& \mbox{in}\ \Omega \\ v_{j,z}^\pm = 0 & \mbox{on}\ \pd \Omega.
\end{array}
\right.
\ee
Moreover, since $(-\Delta + q_j - z) f_\tau^\pm=(q_j + \lambda_\tau^\pm - z) f_\tau^\pm$, by the first line in \eqref{g4}, it follows from \eqref{gg1} that
\bel{g6c}
v_{j,z}^\pm=-(A_{q_j} - z)^{-1}(q_j + \lambda_\tau^\pm - z) f_\tau^\pm.
\ee
Let us now examine the case where $z=\lambda_\tau^\pm$, which is permitted since $\lambda_\tau^\pm$ belongs to the resolvent set of the self-adjoint operator $A_{q_j}$, as we have:
\bel{gg2} 
\Pim{\lambda_\tau^\pm}=\pm 2 \tau \neq 0.
\ee
We shall establish that the $L^2(\Omega)$-norm of $v_{j,\lambda_\tau^\pm}^\pm$ scales like $\tau^{-1}$ as $\tau$ becomes large, whereas the one of $u_{j,\lambda_\tau^\pm}^\pm$ is bounded uniformly in $\tau \in (1+\abs{\xi},+\infty)$. 
To do that, we substitute $\lambda_\tau^\pm$ for $z$ in \eqref{g6c} and get that $v_{j,\lambda_\tau^\pm}^\pm =-(A_{q_j} - \lambda_\tau^\pm)^{-1} q_j f_\tau^\pm$. Next, using that
$\norm{(A_{q_j} - \lambda_\tau^\pm)^{-1}}_{\cB(L^2(\Omega))} = {\rm dist}^{-1}(\lambda_\tau^\pm,\Sp(A_{q_j})) \leq (2 \tau)^{-1}$, according to \eqref{gg2}, we obtain
\bel{g6d} 
\norm{v_{j,\lambda_\tau^\pm}^\pm}_{L^2(\Omega)}  \leq \frac{\norm{q_j}_{L^\infty(\Omega)} \norm{f_\tau^\pm}_{L^2(\Omega)}}{2 \tau} \leq \frac{M c_*}{2 \tau},\ j=1,2,
\ee
upon applying \eqref{g0} and \eqref{g*} with $(p,X)=(2, \Omega)$.
Now, bearing in mind that $\tau \geq 1$ and recalling that $u_{j,z}^\pm = v_{j,z}^\pm + f_\tau^\pm$, we derive from \eqref{g6d}  that
\bel{g6e}
\norm{u_{j,\lambda_\tau^\pm}^\pm}_{L^2(\Omega)} \leq \frac{M+2}{2} c_*,\ j=1,2. 
\ee

\subsubsection{Isozaki's formula}

\begin{proposition}
\label{pr-iso}
For $j=1,2$, let $q_j \in L^\infty(\Omega,\R)$ satisfy \eqref{g0}. Then, for all $\xi \in \R^d$, we have
\bel{iso} 
\int_\Omega (q_1(x)-q_2(x)) e^{-i \xi \cdot x} dx = \lim_{\tau \to +\infty} S_\tau,
\ee
where $S_\tau$ is defined by \eqref{g2}-\eqref{g3}.
\end{proposition}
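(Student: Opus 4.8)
The plan is to derive Isozaki's formula \eqref{iso} by testing the equations satisfied by the functions $u_{1,\lambda_\tau^+}^+$ and $u_{2,\lambda_\tau^-}^-$ against one another via Green's formula, and then letting $\tau \to +\infty$. First I would record the two boundary value problems: $u_{1,\tau}^+ := u_{1,\lambda_\tau^+}^+$ solves $(-\Delta + q_1 - \lambda_\tau^+)u_{1,\tau}^+ = 0$ with trace $f_\tau^+$, and $u_{2,\tau}^- := u_{2,\lambda_\tau^-}^-$ solves $(-\Delta + q_2 - \lambda_\tau^-)u_{2,\tau}^- = 0$ with trace $f_\tau^-$. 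Multiplying the first equation by $\overline{u_{2,\tau}^-}$ (or rather the appropriate conjugate) and integrating by parts twice over $\Omega$, the Laplacian terms produce boundary contributions $\int_{\pd\Omega}(\pd_\nu u_{1,\tau}^+)\,\overline{u_{2,\tau}^-} - u_{1,\tau}^+\,\overline{\pd_\nu u_{2,\tau}^-}$, while the interior terms give $\int_\Omega(q_1 - q_2)u_{1,\tau}^+\overline{u_{2,\tau}^-} + (\lambda_\tau^- - \lambda_\tau^+)\int_\Omega u_{1,\tau}^+\overline{u_{2,\tau}^-}$ — here I must be careful, since $f_\tau^-$ is holomorphic in $\tau$ rather than antiholomorphic, it is cleaner to pair $u_{1,\tau}^+$ with $u_{2,\tau}^-$ without conjugation, using the symmetric bilinear form $\int_\Omega \nabla u \cdot \nabla v$; I would work this out so that the boundary term that appears is exactly $S_{1,\tau} - S_{2,\tau} = S_\tau$ by the definition \eqref{g2}--\eqref{g3}, after also invoking the corresponding identity with $q_1 = q_2 = 0$ applied to the free solutions $f_\tau^\pm$ (which have no interior potential term) to cancel the "reference" DN contribution.

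The key intermediate identity I expect to land on is something of the form
\bel{plan-key}
S_\tau = \int_\Omega (q_1 - q_2)\, u_{1,\tau}^+\, u_{2,\tau}^-\, dx + (\lambda_\tau^+ - \lambda_\tau^-)\int_\Omega u_{1,\tau}^+\, u_{2,\tau}^-\, dx + (\text{terms involving only } f_\tau^\pm),
\ee
where the last group is engineered to vanish or to be computed explicitly, and where $\lambda_\tau^+ - \lambda_\tau^- = (\tau+i)^2 - (\tau-i)^2 = 4i\tau$. The two decompositions $u_{j,\tau}^\pm = f_\tau^\pm + v_{j,\tau}^\pm$ from Subsection \ref{sec-star} then feed in: I would expand $u_{1,\tau}^+ u_{2,\tau}^- = f_\tau^+ f_\tau^- + f_\tau^+ v_{2,\tau}^- + v_{1,\tau}^+ f_\tau^- + v_{1,\tau}^+ v_{2,\tau}^-$. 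In the first integral, $f_\tau^+ f_\tau^- \to e^{-i\xi\cdot x}$ pointwise and is dominated by $e^{2|x|}$ by \eqref{g6b}, so dominated convergence gives $\int_\Omega (q_1-q_2) f_\tau^+ f_\tau^- \to \int_\Omega(q_1-q_2)e^{-i\xi\cdot x}$; the three remaining terms carry at least one factor $v_{j,\tau}^\pm$ whose $L^2$-norm is $O(\tau^{-1})$ by \eqref{g6d}, while the other factor is $O(1)$ in $L^2$ by \eqref{g*} or \eqref{g6e}, so by Cauchy--Schwarz and $\|q_1-q_2\|_{L^\infty} \leq 2M$ they are $O(\tau^{-1})$ and vanish. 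The harder term is $4i\tau\int_\Omega u_{1,\tau}^+ u_{2,\tau}^-$: the leading piece is $4i\tau\int_\Omega f_\tau^+ f_\tau^- = 4i\tau\int_\Omega e^{-i\frac{\tau+i}{\tau}\xi\cdot x}\,dx$, which does not obviously go to zero and is precisely where the construction of the test functions must be exploited.

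The main obstacle, and the crux of the argument, is showing that the $4i\tau\int_\Omega u_{1,\tau}^+ u_{2,\tau}^-\,dx$ term disappears in the limit — or, more likely, that it was never there, because the correct pairing and the inclusion of the free-solution reference terms make it cancel identically. Concretely, applying the same Green's identity to the pair $(f_\tau^+, f_\tau^-)$ with zero potential yields $0 = (\text{boundary terms for } f_\tau^\pm) + 4i\tau\int_\Omega f_\tau^+ f_\tau^-$, since $f_\tau^\pm$ solve the homogeneous Helmholtz equations in \eqref{g4}; subtracting this from the identity for $(u_{1,\tau}^+, u_{2,\tau}^-)$ removes the pure-$f$ oscillatory integral and leaves $4i\tau\int_\Omega(u_{1,\tau}^+ u_{2,\tau}^- - f_\tau^+ f_\tau^-)$, whose integrand is a sum of terms each containing a factor $v_{j,\tau}^\pm = O(\tau^{-1})$ in $L^2$ — but now multiplied by $\tau$, so I need the sharper bound that $v_{j,\tau}^\pm$ paired against $f_\tau^\mp$ or against $v_{j',\tau}^{\mp}$ is actually $o(\tau^{-1})$, or else a more delicate cancellation. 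I anticipate resolving this either by iterating the resolvent bound once more (writing $v_{j,\tau}^\pm = -(A_{q_j}-\lambda_\tau^\pm)^{-1}q_j f_\tau^\pm$ and noting the extra decay when integrated against a bounded function), or by choosing the reference terms in \eqref{g3} so that $S_\tau$ literally equals $\int_\Omega(q_1-q_2)u_{1,\tau}^+ u_{2,\tau}^-\,dx$ plus genuinely negligible remainders, with no surviving $\tau$-linear term. Once that cancellation is pinned down, the passage to the limit is exactly the dominated-convergence plus Cauchy--Schwarz routine sketched above, and \eqref{iso} follows.
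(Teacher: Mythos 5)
Your overall architecture (a Green identity, the splitting $u=f_\tau^\pm+v^\pm$ with $\norm{v^\pm}_{L^2(\Omega)}=O(\tau^{-1})$ from the resolvent bound \eqref{g6d}, then dominated convergence on $f_\tau^+\overline{f_\tau^-}\to e^{-i\xi\cdot x}$) is exactly the paper's, but you leave the decisive step unresolved, and the variant you lean toward would not close. The trouble comes from your decision to pair $u_{1,\lambda_\tau^+}^+$ with $u_{2,\lambda_\tau^-}^-$ through the \emph{unconjugated} symmetric form: that is precisely what creates the term $(\lambda_\tau^+-\lambda_\tau^-)\int_\Omega u_{1,\tau}^+u_{2,\tau}^-\,dx=4i\tau\int_\Omega u_{1,\tau}^+u_{2,\tau}^-\,dx$, and neither of your proposed rescues works. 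Iterating the resolvent bound cannot help, because a prefactor $\tau$ against $\norm{v_{j,\tau}^\pm}_{L^2(\Omega)}=O(\tau^{-1})$ only yields $O(1)$, and this order is sharp since $\dist(\lambda_\tau^\pm,\Sp(A_{q_j}))$ is exactly of order $\tau$. Likewise the leading piece $4i\tau\int_\Omega f_\tau^+f_\tau^-\,dx$ (no conjugate) involves the rapidly oscillating product $e^{2i\tau\beta_\tau\eta\cdot x+(\xi\cdot x)/\tau}$, whose integral over $\Omega$ is generically only $O(\tau^{-1})$, again leaving an $O(1)$ contribution; there is no reference term hidden in \eqref{g3} that subtracts it off.

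The point you are missing is that the pairing in \eqref{g3} is already the \emph{sesquilinear} $L^2(\pd\Omega)$ product and that $\overline{\lambda_\tau^-}=\lambda_\tau^+$. Hence $\overline{f_\tau^-}$ solves $(-\Delta-\lambda_\tau^+)\overline{f_\tau^-}=0$, the free equation with the \emph{same} spectral parameter as $u_{j,\lambda_\tau^+}^+$. The paper therefore tests $(-\Delta+q_j-\lambda_\tau^+)u_{j,\lambda_\tau^+}^+=0$ directly against $\overline{f_\tau^-}$; note that both $S_{1,\tau}$ and $S_{2,\tau}$ are built from solutions with the same Dirichlet trace $f_\tau^+$ and the same parameter $\lambda_\tau^+$, the second test function $f_\tau^-$ entering only as a multiplier on the boundary. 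Two integrations by parts then give $S_{j,\tau}=\langle f_\tau^+,\pd_\nu f_\tau^-\rangle_{L^2(\pd\Omega)}+\int_\Omega q_j\,u_{j,\lambda_\tau^+}^+\overline{f_\tau^-}\,dx$ with no $\lambda$-term at all; the common boundary term cancels in the difference and one lands on $S_\tau=\int_\Omega\bigl(q_1u_{1,\lambda_\tau^+}^+-q_2u_{2,\lambda_\tau^+}^+\bigr)\overline{f_\tau^-}\,dx$, after which your $O(\tau^{-1})$ and dominated-convergence steps go through verbatim. (Your alternative pairing of $u_{1,\lambda_\tau^+}^+$ against $\overline{u_{2,\lambda_\tau^-}^-}$ would also work for the same reason, provided you keep the conjugation and identify $\langle f_\tau^+,\Lambda_{2,\lambda_\tau^-}f_\tau^-\rangle_{L^2(\pd\Omega)}$ with $S_{2,\tau}$ via the symmetry of the DN map; but it is not what you wrote.)
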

\begin{proof} 
For $j=1,2$, we consider the $H^2(\Omega)$-solution $u_{j,\lambda_\tau^+}^+$ to the BVP \eqref{g1} with $f=f_\tau^+$:
\bel{g7}
\left\{ \begin{array}{ll} 
(-\Delta + q_j - \lambda_\tau^+) u_{j,\lambda_\tau^+}^+ =0 & \mbox{in}\ \Omega \\ u_{j,\lambda_\tau^+}^+ = f_\tau^+ & \mbox{on}\ \pd \Omega. \end{array}
\right.
\ee
Upon left-multiplying the first line of \eqref{g7} by $\overline{f_\tau^-}$, integrating over $\Omega$, and applying the Green formula, we obtain with the aid of \eqref{g4} that
\beas
0 & = & \int_\Omega (-\Delta+q_j-\lambda_\tau^+) u_{j,\lambda_\tau^+}^+(x) \overline{f_\tau^-(x)} dx \\
& = & \langle f_\tau^+ , \pd_\nu f_\tau^- \rangle_{L^2(\pd \Omega)} - \langle \pd_\nu u_{j,\lambda_\tau^+}^+ , f_\tau^- \rangle_{L^2(\pd \Omega)} + \int_\Omega  u_{j,\lambda_\tau^+}^+(x) \overline{(-\Delta+q_j-\lambda_\tau^-) f_\tau^-(x)} dx \\
& = & \langle f_\tau^+ , \pd_\nu f_\tau^- \rangle_{L^2(\pd \Omega)} -S_{j,\tau} +\int_\Omega q_j(x) u_{j,\lambda_\tau^+}^+(x) \overline{f_\tau^-(x)} dx,\ j=1,2.
\eeas
Thus, we have $S_{j,\tau} = \langle f_\tau^+ ,\pd_\nu f_\tau^- \rangle_{L^2(\pd \Omega)} +\int_\Omega q_j(x) u_{j,\lambda_\tau^+}^+(x) \overline{f_\tau^-(x)} dx$ for $j=1,2$, and consequently
\bel{g8}
S_\tau = S_{1,\tau}-S_{2,\tau}=\int_\Omega \left( q_1(x) u_{1,\lambda_\tau^+}^+(x)-q_2(x) u_{2,\lambda_\tau^+}^+(x) \right) \overline{f_\tau^-(x)} dx. 
\ee
Next, taking into account that $u_{j,\lambda_\tau^+}^+=v_{j,\lambda_\tau^+}^+ + f_\tau^+$ for $j=1,2$, we deduce from \eqref{g8} that
$$
S_\tau - \int_\Omega (q_1(x)-q_2(x)) f_\tau^+(x) \overline{f_\tau^-(x)} dx =
\int_\Omega (q_1(x)v_{1,\lambda_\tau^+}^+ (x)-q_2(x) v_{2,\lambda_\tau^+}^+(x)) \overline{f_\tau^-(x)} dx.
$$
Therefore, by applying \eqref{g*} with $(p,X)=(2,\Omega)$ and \eqref{g6d}, we get
$$
\abs{S_\tau - \int_\Omega (q_1(x)-q_2(x)) f_\tau^+(x) \overline{f_\tau^-(x)} dx} \leq 
\left( \sum_{j=1}^2 \norm{q_j}_{L^\infty(\Omega)} \norm{v_{j,\lambda_\tau^+}^+}_{L^2(\Omega)}  \right) \norm{f_\tau^-}_{L^2(\Omega)} \leq \frac{M^2 c_*^2}{\tau},
$$
which leads to:
\bel{g10}
\lim_{\tau \to +\infty} \left( S_\tau - \int_\Omega (q_1(x)-q_2(x)) f_\tau^+(x) \overline{f_\tau^-(x)} dx \right) = 0.
\ee
Finally, as we have
$$ \lim_{\tau \to +\infty} \int_\Omega (q_1(x)-q_2(x)) f_\tau^+(x) \overline{f_\tau^-(x)} dx = \int_\Omega (q_1(x)-q_2(x)) e^{-i \xi \cdot x} dx, $$
by the second line of \eqref{g4}, \eqref{g*} with $(p,X)=(+\infty,\Omega)$, \eqref{g0}, and the dominated convergence theorem, the desired result follows directly from this and from \eqref{g10}. 
\end{proof}

Armed with Proposition \ref{pr-iso} we turn now to proving Theorems  \ref{thm-BLmd1} and \ref{thm-BLmd2}.


\subsection{Proof of the incomplete Borg-Levinson theorem}
\label{sec-prthm1}
In this section we prove Theorem \ref{thm-BLmd1}. In view of Proposition \ref{pr-iso}, we have to show that
\bel{p1}
\lim_{\tau \to +\infty} S_\tau =0,\ \xi \in \R^d.
\ee
Indeed, by combining the Isozaki formula \eqref{iso} with \eqref{p1}, we get for every $\xi \in \R^d$ that
the Fourier transform
\bel{p0}
(\cF q)(\xi) := \frac{1}{(2 \pi)^{d \slash 2}} \int_{\R^d} q(x) e^{-i \xi \cdot x} dx
\ee
of the following function
\bel{p*}
q(x) := \left\{ \begin{array}{cl} q_1(x)-q_2(x) & \mbox{if}\ x \in \Omega \\ 0 & \mbox{if}\  x \in \R^d \setminus \Omega, \end{array} \right.
\ee
reads $(\cF q)(\xi) = \frac{1}{(2 \pi)^{d \slash 2} }\int_{\Omega} (q_1(x)-q_2(x)) e^{-i \xi \cdot x} dx =  \frac{1}{(2 \pi)^{d \slash 2}} \lim_{\tau \to +\infty} S_\tau =0$. By the injectivity of Fourier transform $\cF$, this entails that $q=0$ in $\R^d$,  i.e. that $q_1=q_2$ in $\Omega$. 

We turn now to establishing \eqref{p1}. To this purpose, we fix $\xi \in \R^d$, pick $\tau \in (\abs{\xi}+1,+\infty)$, and for $j=1,2$ and all $z \in \C \setminus \Sp(A_{q_j})$, we consider the $H^2(\Omega)$-solution $u_{j,z}^+$ to the BVP \eqref{f1}, where $(q_j,z,f_\tau^+)$ is substituted for $(q,\lambda,f)$, i.e.
$$
\left\{ \begin{array}{ll} 
(-\Delta + q_j - z) u_{j,z}^+ = 0 & \mbox{in}\ \Omega \\ u_{j,z}^+  = f_\tau^+ & \mbox{on}\ \pd \Omega. \end{array}
\right.
$$
For $z_j \in  \C \setminus \Sp(A_{q_j})$, $j=1,2$, we put $u_{j,z_1,z_2}^+:=u_{j,z_1}^+-u_{j,z_2}^+$ and recall from \eqref{g3} that
$$
S_\tau = \langle \Lambda_{1,\lambda_\tau^+} f_\tau^+ , f_\tau^- \rangle_{L^2(\pd \Omega)}-\langle \Lambda_{2,\lambda_\tau^+} f_\tau^+ , f_\tau^-
 \rangle_{L^2(\pd \Omega)} = \langle \pd_\nu u_{1,\lambda_\tau^+}^+- \pd_\nu u_{2,\lambda_\tau^+}^+ , f_\tau^- \rangle_{L^2(\pd \Omega)}. $$
Thus, for every $\mu \in (-\infty,- M)$ we have
\bel{p2b}
S_\tau = \langle \pd_\nu u_{1,\lambda_\tau^+,\mu}^+ , f_\tau^- \rangle_{L^2(\pd \Omega)} - \langle \pd_\nu u_{2,\lambda_\tau^+,\mu}^+ , f_\tau^- \rangle_{L^2(\pd \Omega)} + \langle \pd_\nu u_{1,\mu}^+- \pd_\nu u_{2,\mu}^+ , f_\tau^- \rangle_{L^2(\pd \Omega)}.
\ee
In view of \eqref{f7c}, we have $\lim_{\mu \to -\infty} \norm{ \pd_\nu u_{1,\mu}^+- \pd_\nu u_{2,\mu}^+}_{L^2(\pd \Omega)}=0$ so we get
\bel{p3b}
S_\tau = \lim_{\mu \to -\infty}  \langle \pd_\nu u_{1,\lambda_\tau^+,\mu}^+ - \pd_\nu u_{2,\lambda_\tau^+,\mu}^+, f_\tau^- \rangle_{L^2(\pd \Omega)},
\ee
upon sending $\mu$ to $-\infty$ in \eqref{p2b}.

Next, we introduce
\bel{p4}
\kappa_{\tau,\mu}(t) : = \frac{\mu-\lambda_\tau^+}{(\lambda_\tau^+-t)(\mu-t)},\ t \in \R \setminus \{ \mu \},
\ee
and  set
\bel{p5}
\zeta_\tau(\psi,\phi) := \langle f_\tau^+ , \psi \rangle_{L^2(\pd \Omega)} \overline{\langle f_\tau^- ,\phi \rangle}_{L^2(\pd \Omega)},\ \psi,\ \phi \in L^2(\pd \Omega).
\ee
In light of Lemma \ref{lm2}, the scalar product in the right hand side of \eqref{p3b} decomposes as
\bel{p3c} 
\langle \pd_\nu u_{1,\lambda_\tau^+,\mu}^+ - \pd_\nu u_{2,\lambda_\tau^+,\mu}^+, f_\tau^- \rangle_{L^2(\pd \Omega)}
=  \sum_{n=1}^{+\infty} \left( \kappa_{\tau,\mu}(\lambda_{1,n}) \zeta_\tau(\psi_{1,n}) -  \kappa_{\tau,\mu}(\lambda_{2,n}) \zeta_\tau(\psi_{2,n}) \right),
\ee
where the notation $\zeta_\tau(\psi)$ is a shorthand for $\zeta_\tau(\psi,\psi)$. Further, as $(\lambda_{1,n},\psi_{1,n})=(\lambda_{2,n},\psi_{2,n})$ for every $n \geq N$, by assumption, \eqref{p3c} becomes
\bel{p6} 
\langle \pd_\nu u_{1,\lambda_\tau^+,\mu}^+ - \pd_\nu u_{2,\lambda_\tau^+,\mu}^+, f_\tau^- \rangle_{L^2(\pd \Omega)}
=  \sum_{n=1}^{N-1} \left( \kappa_{\tau,\mu}(\lambda_{1,n}) \zeta_\tau(\psi_{1,n}) -  \kappa_{\tau,\mu}(\lambda_{2,n}) \zeta_\tau(\psi_{2,n}) \right), 
\ee
the sum in the right hand side of the above equality being taken equal to zero when $N=1$.
Further, taking into account that  
$$\lim_{\mu \to -\infty} \kappa_{\tau,\mu}(\lambda_{j,n}) = 1 \slash (\lambda_\tau^+-\lambda_{j,n}),\ j=1,2,\ n=1,\ldots,N-1, $$
we deduce from \eqref{p3b} and \eqref{p6}, that
$$
S_\tau =  \sum_{n=1}^{N-1} \left( 
\frac{\zeta_\tau(\psi_{1,n}) }{\lambda_\tau^+-\lambda_{1,n}} -  \frac{\zeta_\tau(\psi_{2,n}) }{\lambda_\tau^+-\lambda_{2,n}} \right). 
$$
Next, bearing in mind that $\Pim{\lambda_\tau^+}= 2 \tau$, we see that $\abs{\lambda_\tau^+-\lambda_{j,n}} \geq 2 \tau$ for $j=1,2$ and for all $n=1,\ldots,N-1$, and hence that
\beas
\abs{S_\tau}  & \leq & \sum_{n=1}^{N-1} \left( \abs{\zeta_\tau(\psi_{1,n})}+ \abs{\zeta_\tau(\psi_{2,n})}\right) (2 \tau)^{-1} \\
& \leq & \norm{f_\tau^+}_{L^2(\pd \Omega)} \norm{f_\tau^-}_{L^2(\pd \Omega)} \sum_{n=1}^{N-1} \left( \norm{\psi_{1,n}}_{L^2(\pd \Omega)}^2 + \norm{\psi_{2,n}}_{L^2(\pd \Omega)}^2 \right) (2 \tau)^{-1},
\eeas
from \eqref{p4}-\eqref{p5}.
Now, applying \eqref{g*} with $X=\pd \Omega$ and $p=2$, we obtain that 
$$\abs{S_\tau} \leq c_*^2 \sum_{n=1}^{N-1} \left( \norm{\psi_{1,n}}_{L^2(\pd \Omega)} ^2+ \norm{\psi_{2,n}}_{L^2(\pd \Omega)}^2 \right)  \tau^{-1},$$ 
which immediately entails \eqref{p1}.


\subsection{Proof of the asymptotic Borg-Levinson theorem}
\label{sec-prthm2} 
In this section we prove Theorem \ref{thm-BLmd2}. We stick with the notations of Section \ref{sec-prthm1} and recall from \eqref{p3b} and \eqref{p3c} that
\bel{k1}
S_\tau = \lim_{\mu \to -\infty} \langle \pd_\nu u_{1,\lambda_\tau^+,\mu}^+ -  \pd_\nu u_{2,\lambda_\tau^+,\mu}^+ , f_\tau^- \rangle_{L^2(\pd \Omega)} = \lim_{\mu \to -\infty} \sum_{n=1}^{+\infty} \left( A_{n,\tau,\mu} + B_{n,\tau,\mu}  \right),\ee
where
\bel{k2}
A_{n,\tau,\mu} := \left( \kappa_{\tau,\mu}(\lambda_{1,n})- \kappa_{\tau,\mu}(\lambda_{2,n}) \right) 
\zeta_{\tau}(\psi_{1,n}),
\ee
and
\bel{k3}
B_{n,\tau,\mu} := \kappa_{\tau,\mu}(\lambda_{2,n}) \left( \zeta_{\tau}(\psi_{1,n}-\psi_{2,n},\psi_{1,n}) +  
\zeta_{\tau}(\psi_{2,n}, \psi_{1,n}-\psi_{2,n}) \right).
\ee
We split the proof into three steps. The first one, presented in Section \ref{sec-an}, is to show that
\bel{k4} 
\lim_{\mu \to -\infty} \sum_{n=1}^{+\infty} A_{n,\tau,\mu} =
\sum_{n=1}^{+\infty} A_{n,\tau,*},\ \mbox{where}\ A_{n,\tau,*}:= \frac{(\lambda_{1,n}-\lambda_{2,n})\zeta_{\tau}(\psi_{1,n})}{(\lambda_\tau^+-\lambda_{1,n})(\lambda_\tau^+-\lambda_{2,n})},
\ee
while the second one, given in Section \ref{sec-bn}, establishes that
\bel{k5} 
\lim_{\mu \to -\infty} \sum_{n=1}^{+\infty} B_{n,\tau,\mu} = 
\sum_{n=1}^{+\infty} B_{n,\tau,*},\ \mbox{where}\
B_{n,\tau,*}:= \frac{\zeta_{\tau}(\psi_{1,n}-\psi_{2,n},\psi_{1,n})+\zeta_{\tau}(\psi_{2,n},\psi_{1,n}-\psi_{2,n})}{\lambda_\tau^+-\lambda_{2,n}}.
\ee
Finally, the end f the proof is displayed in Section \ref{sec-3s}.

\subsubsection{Step 1: Proof of \eqref{k4}}
\label{sec-an}
Let us start by noticing that
$$
\abs{\kappa_{\tau,\mu}(\lambda_{1,n})- \kappa_{\tau,\mu}(\lambda_{2,n})} \leq 2 \abs{\lambda_{1,n}-\lambda_{2,n}}
\max_{t \in [\lambda_{1,n},\lambda_{2,n}]} \left( \frac{1}{\abs{\lambda_\tau^+-t}^2} + \frac{1}{\abs{\mu-t}^2} \right),\ n \in \N.
$$
This can be seen from the identity
$\kappa_{\tau,\mu}(\lambda_{1,n})- \kappa_{\tau,\mu}(\lambda_{2,n})
=- \int_{\lambda_{1,n}}^{\lambda_{2,n}}  \kappa_{\tau,\mu}'(t) dt$, which yields
$$ \abs{\kappa_{\tau,\mu}(\lambda_{1,n})- \kappa_{\tau,\mu}(\lambda_{2,n})} \leq 
\abs{\lambda_{1,n}-\lambda_{2,n}} \max_{t \in [\lambda_{1,n},\lambda_{2,n}]} 
\left( \frac{\abs{\lambda_\tau^+-\mu}}{\abs{\lambda_\tau^+-t} \abs{\mu-t}^2} + \frac{\abs{\lambda_\tau^+-\mu}}{\abs{\lambda_\tau^+-t}^2 \abs{\mu-t}} \right), $$
and from the basic estimate $\abs{\lambda_\tau^+-\mu} \leq \abs{\lambda_\tau^+-t} + \abs{\mu-t}$, entailing:
\beas
\frac{\abs{\lambda_\tau^+-\mu}}{\abs{\lambda_\tau^+-t} \abs{\mu-t}^2} + \frac{\abs{\lambda_\tau^+-\mu}}{\abs{\lambda_\tau^+-t}^2 \abs{\mu-t}}
& \leq & \frac{1}{\abs{\mu-t}^2} + \frac{2}{\abs{\lambda_\tau^+-t} \abs{\mu-t}} +  \frac{1}{\abs{\lambda_\tau^+-t}^2} \\
& \leq & \frac{2}{\abs{\mu-t}^2}+ \frac{2}{\abs{\lambda_\tau^+-t}^2}.
\eeas
Denote by $\lambda_{*,n}$ a real number between $\lambda_{1,n}$ and $\lambda_{2,n}$, where the maximum of the function $t \mapsto  \abs{\lambda_\tau^+-t}^{-2} + \abs{\mu-t}^{-2}$ is achieved, in such a way that we have
\bel{k6}
\abs{\kappa_{\tau,\mu}(\lambda_{1,n})- \kappa_{\tau,\mu}(\lambda_{2,n})} \leq 2 \abs{\lambda_{1,n}-\lambda_{2,n}}
\left( \frac{1}{\abs{\lambda_\tau^+-\lambda_{*,n}}^2} + \frac{1}{\abs{\mu-\lambda_{*,n}}^2} \right),\ n \in \N.
\ee
Next, bearing in mind that $\lim_{n \to +\infty} \lambda_{1,n}=+\infty$, we pick $N_0 \in \N$ so large, that 
\bel{k7}
\lambda_{1,N_0} \geq \abs{\lambda_\tau^+} + 4 M.
\ee
Since $\lambda_{1,n} \geq \lambda_{1,N_0}$ for all $n \geq N_0$, we have $\abs{\lambda_\tau^+-\lambda_{1,n}} \geq \lambda_{1,n} -\abs{\lambda_\tau^+} \geq 4M$ in this case, whence
$\abs{\lambda_\tau^+-\lambda_{*,n}} \geq \abs{\lambda_\tau^+-\lambda_{1,n}} -  \abs{\lambda_{1,n}-\lambda_{*,n}}
 \geq \abs{\lambda_\tau^+-\lambda_{1,n}} -  2M$. Here, we used the basic inequality $\abs{\lambda_{1,n}-\lambda_{*,n}} \leq \abs{\lambda_{1,n}-\lambda_{2,n}}$ and the estimate 
\bel{k7*}
\abs{\lambda_{1,n}-\lambda_{2,n}} \leq \norm{q_1-q_2}_{L^\infty(\Omega)} \leq 2 M,\ n \in \N,
\ee 
arising from the Min-Max principle
and the operator identity $A_{q_2}=A_{q_1}+q_2-q_1$. Therefore, we have
\bel{k8*}
\abs{\lambda_\tau^+-\lambda_{*,n}} \geq \frac{\abs{\lambda_\tau^+-\lambda_{1,n}}}{2},\ n \geq N_0.
\ee
Similarly, taking $\mu \in \left(-\infty, -(1+5M) \right)$, we have $\abs{\mu-\lambda_{1,n}} \geq -\mu - M \geq 4M$. Since $\abs{\lambda_{1,n}-\lambda_{*,n}} \leq 2M$, by \eqref{k7*}, we get that $\abs{\lambda_{1,n}-\lambda_{*,n}} \leq \abs{\mu-\lambda_{1,n}} \slash 2$, and hence
$$ \abs{\mu-\lambda_{*,n}} \geq \abs{\mu-\lambda_{1,n}}  - \abs{\lambda_{1,n} - \lambda_{*,n}} \geq \frac{\abs{\mu-\lambda_{1,n}}}{2},\ n \in \N. $$
Putting this together with \eqref{k2},  \eqref{k6} and \eqref{k8*}, we obtain that
\bel{k8}
\abs{A_{n,\tau,\mu}} \leq 8 \delta_1 \left( \frac{\abs{\zeta_{\tau}(\psi_{1,n})}}{\abs{\lambda_\tau^+-\lambda_{1,n}}^2} + \frac{\abs{\zeta_{\tau}(\psi_{1,n})}}{\abs{\mu-\lambda_{1,n}}^2} \right),\ n \geq N_0,
\ee
where $\delta_1 := \sup_{n \in \N} \abs{\lambda_{1,n}-\lambda_{2,n}} < \infty$. 

Further, in light of \eqref{p5}, we deduce from \eqref{f3} that
\beas
\sum_{n =1}^{+\infty} \frac{\abs{\zeta_{\tau}(\psi_{1,n})}}{\abs{\ell-\lambda_{1,n}}^2} & \leq &
\left( \sum_{n =1}^{+\infty} \abs{\frac{\langle f_\tau^+ , \psi_{1,n} \rangle_{L^2(\pd \Omega)}}{\ell-\lambda_{1,n}}}^2 \right)^{1 \slash 2}
\left( \sum_{n =1}^{+\infty} \abs{\frac{\langle f_\tau^- , \psi_{1,n} \rangle_{L^2(\pd \Omega)}}{\overline{\ell}-\lambda_{1,n}}}^2 \right)^{1 \slash 2} \\
& \leq & \norm{u_{1,\ell}^+}_{L^2(\Omega)}  
\norm{u_{1,\overline{\ell}}^-}_{L^2(\Omega)},\ \ell=\lambda_\tau^+,\ \mu.
\eeas
Here, $u_{1,\ell}^\pm$ denotes the $H^2(\Omega)$-solution to \eqref{f1} where $(\ell,q_1,f_\tau^\pm)$ is substituted for $(\lambda,q,f)$.
Thus, bearing in mind that $\overline{\lambda_\tau^+}=\lambda_\tau^-$, we derive from \eqref{k8} that
$$  \sum_{n =N_0}^{+\infty} \abs{A_{n,\tau,\mu}} \leq 8 \delta_1 \left( \norm{u_{1,\lambda_\tau^+}^+}_{L^2(\Omega)}  
\norm{u_{1,\lambda_\tau^-}^-}_{L^2(\Omega)} + \norm{u_{1,\mu}^+}_{L^2(\Omega)}  
\norm{u_{1,\mu}^-}_{L^2(\Omega)} \right). $$
With reference to \eqref{f3}, we assume upon possibly enlarging $-\mu$, that $\norm{u_{1,\mu}^\pm}_{L^2(\Omega)} \leq 1$, and obtain
\bel{k8a}  
\sum_{n =N_0}^{+\infty} \abs{A_{n,\tau,\mu}} \leq 2 \delta_1 \left( (M+2)^2 c_*^2  + 4 \right),
\ee
with the aid of \eqref{g6e}. Now, since $\lim_{\mu \to -\infty} A_{n,\tau,\mu}=A_{n,\tau,*}$ for all $n \in \N$, we deduce
\eqref{k4} from this and \eqref{k8a} by invoking the Lebesgue dominated convergence theorem.


\subsubsection{Step 2: Proof of \eqref{k5}}
\label{sec-bn}
For all $n \geq N_0$, we infer from \eqref{k7}-\eqref{k7*} that
$$ \lambda_{2,n} \geq \lambda_{1,n} - \abs{\lambda_{1,n}-\lambda_{2,n}} \geq \lambda_{1,N_0} - 2M \geq \abs{\lambda_\tau^+}, $$
whence $\abs{\mu-\lambda_{2,n}} = \lambda_{2,n} - \mu \geq \abs{\lambda_\tau^+} - \mu \geq \abs{\lambda_\tau^+ - \mu}$.
Therefore, we get
$$ \abs{\kappa_{\tau,\mu}(\lambda_{2,n})} \leq \frac{1}{\abs{\lambda_\tau^+-\lambda_{2,n}}},\ n \geq N_0, $$ 
from \eqref{p4}. This and \eqref{p5} entail
\bea
\abs{B_{n,\tau,\mu}} & \leq & \norm{\psi_{1,n} - \psi_{2,n}}_{L^2(\pd \Omega)} \left( 
\norm{f_\tau^+}_{L^2(\pd \Omega)} \abs{\frac{ \langle f_\tau^- , \psi_{1,n} \rangle_{L^2(\pd \Omega)}}{\lambda_\tau^+-\lambda_{2,n}}} + \norm{f_\tau^-}_{L^2(\pd \Omega)}
\abs{\frac{ \langle f_\tau^+ , \psi_{2,n} \rangle_{L^2(\pd \Omega)}}{\lambda_\tau^+-\lambda_{2,n}}} \right) \nonumber \\
& \leq &  c_* \norm{\psi_{1,n} - \psi_{2,n}}_{L^2(\pd \Omega)} \left( 
\abs{\frac{ \langle f_\tau^- , \psi_{1,n} \rangle_{L^2(\pd \Omega)}}{\lambda_\tau^+-\lambda_{2,n}}} + 
\abs{\frac{ \langle f_\tau^+ , \psi_{2,n} \rangle_{L^2(\pd \Omega)}}{\lambda_\tau^+-\lambda_{2,n}}} \right),
\label{k9}
\eea
by applying \eqref{g*} with $(p,X)=(2,\pd \Omega)$. 

Further, recalling from \eqref{k7} that $\abs{\lambda_\tau^+-\lambda_{1,n}} \geq 2 \norm{q_1-q_2}_{L^\infty(\Omega)}$ for all $n \geq N_0$, and using the estimate $\abs{\lambda_\tau^+-\lambda_{2,n}} \geq \abs{\lambda_\tau^+-\lambda_{1,n}} -  \norm{q_1-q_2}_{L^\infty(\Omega)}$ arising from \eqref{k7*}, we find that
\bel{k9*}
\abs{\lambda_\tau^+-\lambda_{2,n}} \geq \frac{\abs{\lambda_\tau^+-\lambda_{1,n}}}{2}=\frac{\abs{\lambda_\tau^- - \lambda_{1,n}}}{2},\ n \geq N_0.
\ee
Putting this together with \eqref{k9}, we get for every $\mu \in \left(-\infty,-(1+5M) \right]$, that
$$
\sum_{n=N_0}^{+\infty} \abs{B_{n,\tau,\mu}} \leq  2 c_* \eps_1 \left( 
\left( \sum_{n=N_0}^{+\infty} \abs{\frac{\langle f_\tau^- , \psi_{1,n} \rangle_{L^2(\pd \Omega)}}{\lambda_\tau^- -\lambda_{1,n}}}^2 \right)^{1 \slash 2} + 
\left( \sum_{n=N_0}^{+\infty} \abs{\frac{ \langle f_\tau^+ , \psi_{2,n} \rangle_{L^2(\pd \Omega)}}{\lambda_\tau^+-\lambda_{2,n}}}^2 \right)^{1 \slash 2} \right),
$$
with $\eps_1:= \left( \sum_{n=1}^{+\infty} \norm{\psi_{1,n} - \psi_{2,n}}_{L^2(\pd \Omega)}^2 \right)^{1 \slash 2}$. This leads to
$$\sum_{n=N_0}^{+\infty} \abs{B_{n,\tau,\mu}} \leq  2 c_* \eps_1 \left( 
\norm{u_{1,\lambda_\tau^-}^-}_{L^2(\Omega)}  + 
\norm{u_{2,\lambda_\tau^+}^+}_{L^2(\Omega)}  \right) \leq 8 (M+2) c_*^2 ,
$$
with the aid of Lemma \ref{lm1} and \eqref{g6e}. Now, \eqref{k5} follows from this and the identities
$$\lim_{\mu \to -\infty} B_{n,\tau,\mu}=B_{n,\tau,*},\ n \in \N, $$
by applying Lebesgue's dominated convergence theorem. \\


\subsubsection{Step 3: End of the proof}
\label{sec-3s}

Putting \eqref{k1} and \eqref{k4}-\eqref{k5} together, we obtain that
\bel{k10}
S_\tau = \sum_{n=1}^{+\infty} \left( A_{n,\tau,*} + B_{n,\tau,*} \right).
\ee
Further, since $\Pim{\lambda_\tau^+-\lambda_{j,n}}=2 \tau$ for $j=1,2$ and all $n \in \N$, by \eqref{gg2}, we infer from \eqref{g*} with $(p,X)=(2, \pd \Omega)$, \eqref{p5},
and \eqref{k4}-\eqref{k5} that
$$ \abs{A_{n,\tau,*}} \leq c_*^2 \frac{\abs{\lambda_{1,n}-\lambda_{2,n}} \norm{\psi_{1,n}}_{L^2(\pd \Omega)}^2}{\tau^2} $$ 
and
$$
\abs{B_{n,\tau,*}} \leq c_*^2 \frac{\left( \norm{\psi_{1,n}}_{L^2(\pd \Omega)} + \norm{\psi_{2,n}}_{L^2(\pd \Omega)} \right) \norm{\psi_{1,n}-\psi_{2,n}}_{L^2(\pd \Omega)}}{\tau}. $$
Therefore, it holds true for all $n \in \N$ that $\lim_{\tau \to +\infty} A_{n,\tau,*} = \lim_{\tau \to +\infty} B_{n,\tau,*} = 0$, so it follows from \eqref{k10} that 
\bel{k10*}
 \limsup_{\tau \to +\infty} \abs{S_\tau} \leq \limsup_{\tau \to +\infty} \sum_{n=N}^{+\infty}  \abs{A_{n,\tau,*}}
+ \limsup_{\tau \to +\infty} \sum_{n=N}^{+\infty}  \abs{B_{n,\tau,*}},\ N \in \N.
\ee
Moreover, setting $\delta_N :=  \sup_{n \geq N} \abs{\lambda_{1,n}-\lambda_{2,n}}$, we infer from \eqref{p5} and \eqref{k4} that 
\beas
\sum_{n=N}^{+\infty}  \abs{A_{n,\tau,*}} & \leq & \delta_N
\sum_{n=N}^{+\infty} \abs{\frac{\langle f_\tau^- , \psi_{1,n} \rangle_{L^2(\pd \Omega)}}{\lambda_\tau^+-\lambda_{1,n}}}
\abs{\frac{\langle f_\tau^+ , \psi_{1,n} \rangle_{L^2(\pd \Omega)}}{\lambda_\tau^+-\lambda_{2,n}}} \\
& \leq &  2 \delta_N 
\left( \sum_{n=N}^{+\infty} \abs{\frac{\langle f_\tau^- , \psi_{1,n} \rangle_{L^2(\pd \Omega)}}{\lambda_\tau^- - \lambda_{1,n}}}^2 \right)^{1 \slash 2}
\left( \sum_{n=N}^{+\infty} \abs{\frac{\langle f_\tau^+ , \psi_{1,n} \rangle_{L^2(\pd \Omega)}}{\lambda_\tau^+-\lambda_{1,n}}}^2 \right)^{1 \slash 2},\ N \geq N_0.
\eeas
In the last line we used the Cauchy-Schwarz inequality, the estimate \eqref{k9*} and the identity $\abs{\lambda_\tau^- - \lambda_{1,n}}=\abs{\lambda_\tau^+ - \lambda_{1,n}}$. Therefore, applying Lemma \ref{lm1} and \eqref{g6e}, we obtain for all $N \geq N_0$, that
$$ \sum_{n=N}^{+\infty}  \abs{A_{n,\tau,*}}
\leq  2 \delta_N  \norm{u_{1,\lambda_\tau^-}^-}_{L^2(\pd \Omega)}  \norm{u_{1,\lambda_\tau^+}^+}_{L^2(\pd \Omega)}
\leq  \frac{(M+2)^2}{2} c_*^2 \delta_N, $$ 
which entails
\bel{k11} 
\limsup_{\tau \to +\infty} \sum_{n=N}^{+\infty} \abs{A_{n,\tau,*}} \leq  \frac{(M+2)^2}{2} c_*^2 \left( \sup_{n \geq N} \abs{\lambda_{1,n}-\lambda_{2,n}} \right),\ N \geq N_0.
\ee
Similarly, using \eqref{p5} and \eqref{k5}, we can upper bound $\sum_{n=N}^{+\infty}  \abs{B_{n,\tau,*}}$ by 
\beas
& &  \sum_{n=N}^{+\infty} \norm{\psi_{1,n} -  \psi_{2,n}}_{L^2(\pd \Omega)}  \left(  
\norm{f_\tau^+}_{L^2(\pd \Omega)} \abs{\frac{\langle f_\tau^- , \psi_{1,n} \rangle_{L^2(\pd \Omega)}}{\lambda_\tau^+-\lambda_{2,n}}}
+
\norm{f_\tau^-}_{L^2(\pd \Omega)} \abs{\frac{\langle f_\tau^+ , \psi_{2,n} \rangle_{L^2(\pd \Omega)}}{\lambda_\tau^+-\lambda_{2,n}}} \right) \nonumber \\
& \leq & \eps_N
\left(
\norm{f_\tau^+}_{L^2(\pd \Omega)} \left(  \sum_{n=N}^{+\infty} \abs{\frac{\langle f_\tau^- , \psi_{1,n} \rangle_{L^2(\pd \Omega)}}{\lambda_\tau^+-\lambda_{2,n}}}^2  \right)^{1 \slash 2}
+
\norm{f_\tau^-}_{L^2(\pd \Omega)} \left(  \sum_{n=N}^{+\infty} \abs{\frac{\langle f_\tau^+ , \psi_{2,n} \rangle_{L^2(\pd \Omega)}}{\lambda_\tau^+-\lambda_{2,n}}}^2  \right)^{1 \slash 2}
\right), \label{k14}
\eeas
where $\eps_N:=  \left(  \sum_{n=N}^{+\infty}   \norm{\psi_{1,n} -  \psi_{2,n}}_{L^2(\pd \Omega)}^2 \right)^{1 \slash 2}$. Next, applying Lemma \ref{lm1} we get for all $N \in \N$ that
$\sum_{n=N}^{+\infty} \abs{\frac{\langle f_\tau^+ , \psi_{2,n} \rangle_{L^2(\pd \Omega)}}{\lambda_\tau^+-\lambda_{2,n}}}^2 \leq \norm{u_{2,\lambda_\tau^+}^+}_{L^2(\Omega)}^2$, and for every $N \geq N_0$ that
$$ \sum_{n=N}^{+\infty} \abs{\frac{\langle f_\tau^- , \psi_{1,n} \rangle_{L^2(\pd \Omega)}}{\lambda_\tau^+-\lambda_{2,n}}}^2 \leq
4 \sum_{n=N}^{+\infty} \abs{\frac{\langle f_\tau^- , \psi_{1,n} \rangle_{L^2(\pd \Omega)}}{\lambda_\tau^+-\lambda_{1,n}}}^2 \leq 
4 \norm{u_{1,\lambda_\tau^-}^-}_{L^2(\Omega)}^2,
$$
by virtue of \eqref{k9*}. Therefore, in light of \eqref{g*} with $(p,X)=(2,\pd \Omega)$ and \eqref{g6e}, we have
$$
\sum_{n=N}^{+\infty}  \abs{B_{n,\tau,*}} \leq \eps_N \left( 2 \norm{f_\tau^+}_{L^2(\pd \Omega)} \norm{u_{1,\lambda_\tau^-}^-}_{L^2(\Omega)} +
\norm{f_\tau^-}_{L^2(\pd \Omega)} \norm{u_{2,\lambda_\tau^+}^+}_{L^2(\Omega)} 
\right)  \leq \frac{3(M+2)}{2} c_*^2 \eps_N,
$$
provided $N \geq N_0$, and hence
$$
\limsup_{\tau \to +\infty} \sum_{n=N}^{+\infty}  \abs{B_{n,\lambda_\tau^+,*}} \leq  \frac{3(M+2)}{2} c_*^2 \left(  \sum_{n=N}^{+\infty}   \norm{\psi_{1,n} -  \psi_{2,n}}_{L^2(\pd \Omega)}^2 \right)^{1 \slash 2},\ N \geq N_0.
$$
Putting this together with \eqref{k10*}-\eqref{k11}, we obtain
\bel{k12}  
\limsup_{\tau \to +\infty} \abs{S_\tau} \leq c \left( \left( \sum_{n=N}^{+\infty} \norm{\psi_{1,n} -  \psi_{2,n}}_{L^2(\pd \Omega)}^2 \right)^{1 \slash 2} + \sup_{n \geq N} \abs{\lambda_{1,n}-\lambda_{2,n}} \right),\ N \geq N_0,
\ee
where the constant $c:= (M+2) (M +5) c_*^2  \slash 2$ is independent of $N$. Now, by sending $N$ to $+\infty$ in the right hand side of the above estimate, we get that $\limsup_{\tau \to +\infty} \abs{S_\tau}=0$. Thus, we have $\lim_{\tau \to +\infty} S_\tau=0$, by virtue of Proposition \ref{pr-iso}. This entails in the same way as in Section \ref{sec-prthm1} that $q_1=q_2$ in $\Omega$, which terminates the proof of Theorem \ref{thm-BLmd2}


\subsection{The stability issue}
\label{sec-stability}

The stability issue for the Borg-Levinson inverse problem was first examined by G. Alessandrini and J. Sylvester in \cite{AS}, who proved H\"older stable determination of $q$ by $\bsd(q)$ (see also \cite[Theorem 2.31]{C} for a reformulation of their result). We shall establish in this section, at the expense of stronger regularity on $q$, that it can be H\"older-stably determined by the asymptotic behavior of its BSD, provided $q$ is known on the boundary $\pd \Omega$. 

\subsubsection{Notations and stability inequality}

We stick with the notations of Section \ref{sec-BLth}.
In particular, given two real-valued potentials $q_j$, $j=1,2$, we denote by $\{ \lambda_{j,n},\ n \in \N \}$ the sequence of the eigenvalues of $A_{q_j}$, arranged in non-decreasing order (and repeated with the multiplicity), and we write $\psi_{j,n}$ instead of $\pd_\nu \varphi_{j,n}$ for all $n \in \N$, where $\{ \varphi_{j,n},\ n \in \N \}$ is a $L^2(\Omega)$-orthonormal basis of eigenvectors of $A_{q_j}$, such that $A_{q_j} \varphi_{j,n} =  \lambda_{j,n} \varphi_{j,n}$. 

\begin{theorem}
\label{thm-BLmd3}
For $M \in (0,+\infty)$ fixed, pick $q_1$ and $q_2$ in $L^\infty(\Omega,\R) \cap H^1(\Omega)$, such that
\bel{s1}
\norm{q_j}_{L^\infty(\Omega)} +  \norm{q_j}_{H^1(\Omega)} \leq M,\ j=1,2,
\ee
and
\bel{s2}
q_1 = q_2\ \mbox{on}\ \pd \Omega.
\ee
Assume moreover that 
\bel{s3}
\sum_{n=1}^{+\infty} \norm{\psi_{1,n}-\psi_{2,n}}_{L^2(\pd \Omega)}^2 < \infty.
\ee
Then, the following stability estimate
$$ 
\norm{q_1-q_2}_{L^2(\Omega)} \leq C \limsup_{n \to +\infty} \abs{\lambda_{1,n}-\lambda_{2,n}}^{2 \slash (d+2)},
$$
holds for some positive constant $C$ that depends only on $\Omega$ and $M$.
\end{theorem}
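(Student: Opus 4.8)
The plan is to combine Isozaki's representation formula (Proposition \ref{pr-iso}) with the frequency-splitting interpolation of Alessandrini and Sylvester. \emph{Step 1: a uniform bound on the Fourier transform of $q_1-q_2$.} I would first revisit the proof of Theorem \ref{thm-BLmd2}, observing that the hypothesis $\lambda_{1,n}-\lambda_{2,n}\to 0$ is used only in its very last line; keeping instead only the summability condition \eqref{s3} (together with the unconditional bound \eqref{k7*}), the same reasoning — which culminates in \eqref{k12}, after which one now lets $N\to+\infty$ — yields
\bel{sp1}
\limsup_{\tau\to+\infty}\abs{S_\tau}\leq c_0\,\delta,\qquad \delta:=\limsup_{n\to+\infty}\abs{\lambda_{1,n}-\lambda_{2,n}}\in[0,2M],
\ee
for every $\xi\in\R^d$, with $c_0$ depending only on $\Omega$ and $M$ and, crucially, \emph{not} on $\xi$, since every constant entering \eqref{g6}--\eqref{g6e}--\eqref{k12} depends only on $\Omega$ and $M$. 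By Proposition \ref{pr-iso} one has $\lim_{\tau\to+\infty}S_\tau=\int_\Omega(q_1(x)-q_2(x))e^{-i\xi\cdot x}\,dx$, so \eqref{sp1} amounts to $\norm{\cF q}_{L^\infty(\R^d)}\leq(2\pi)^{-d/2}c_0\,\delta$, where $q$ and $\cF$ are as in \eqref{p*}--\eqref{p0}.

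\emph{Step 2: high frequencies via the a priori bound.} Here the hypotheses \eqref{s1}--\eqref{s2} come in. By \eqref{s1}, $q_1-q_2\in H^1(\Omega)$, and by \eqref{s2} its trace on $\pd\Omega$ vanishes, so $q_1-q_2\in H_0^1(\Omega)$; hence its extension $q$ by zero (as in \eqref{p*}) belongs to $H^1(\R^d)$, with $\norm{q}_{H^1(\R^d)}=\norm{q_1-q_2}_{H^1(\Omega)}\leq\norm{q_1}_{H^1(\Omega)}+\norm{q_2}_{H^1(\Omega)}\leq 2M$ by \eqref{s1}, and by the Plancherel theorem $\int_{\R^d}\abs{\xi}^2\abs{(\cF q)(\xi)}^2\,d\xi=\norm{\nabla q}_{L^2(\R^d)}^2\leq 4M^2$. \emph{Step 3: splitting and optimization.} For $R\in(0,+\infty)$, the Plancherel theorem gives
\bel{sp2}
\norm{q_1-q_2}_{L^2(\Omega)}^2=\norm{\cF q}_{L^2(\R^d)}^2=\int_{\abs{\xi}\leq R}\abs{(\cF q)(\xi)}^2\,d\xi+\int_{\abs{\xi}>R}\abs{(\cF q)(\xi)}^2\,d\xi,
\ee
whose first term is bounded by $\omega_d R^d\norm{\cF q}_{L^\infty(\R^d)}^2\leq\omega_d(2\pi)^{-d}c_0^2\,R^d\delta^2$ ($\omega_d$ being the volume of the unit ball of $\R^d$) thanks to Step 1, while the second is bounded by $R^{-2}\int_{\R^d}\abs{\xi}^2\abs{(\cF q)(\xi)}^2\,d\xi\leq 4M^2R^{-2}$ by Step 2. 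Hence $\norm{q_1-q_2}_{L^2(\Omega)}^2\leq C_1R^d\delta^2+4M^2R^{-2}$ for all $R>0$, with $C_1=C_1(\Omega,M)$; when $\delta>0$, choosing $R$ equal to the minimizer of the right-hand side (of order $\delta^{-2/(d+2)}$) yields $\norm{q_1-q_2}_{L^2(\Omega)}^2\leq C^2\delta^{4/(d+2)}$, i.e. the claimed estimate with $C=C(\Omega,M)$, while for $\delta=0$ letting $R\to+\infty$ gives $q_1=q_2$, consistently with the estimate.

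Essentially all of the analytic work is already carried out in the proof of Theorem \ref{thm-BLmd2}; the genuinely new ingredients are the elementary Step 3 and, above all, Step 2, where the boundary condition \eqref{s2} is indispensable in order that the zero extension of $q_1-q_2$ belong to $H^1(\R^d)$ (merely $q_1-q_2\in H^1(\Omega)$ would give a frequency decay too weak for the optimal exponent). I expect the only point calling for care in Step 1 to be the uniformity in $\xi$ of the constant $c_0$: one must verify that none of the constants occurring in the test functions \eqref{g6}, in \eqref{g6e}, or in the chain of estimates leading to \eqref{k12} depends on $\xi$, which — as recalled above — is the case since they all depend only on $\Omega$ and $M$.
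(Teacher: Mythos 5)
Your proof is correct and follows essentially the same route as the paper's: the uniform-in-$\xi$ bound $\norm{\cF q}_{L^\infty(\R^d)}\leq c\,\delta$ obtained by combining \eqref{k12} (whose derivation indeed uses only \eqref{k7*} and \eqref{s3}, not $\lambda_{1,n}-\lambda_{2,n}\to 0$) with Proposition \ref{pr-iso}, the observation that \eqref{s1}--\eqref{s2} put the zero extension of $q_1-q_2$ in $H^1(\R^d)$ with norm at most $2M$, and the Plancherel splitting optimized at $R\sim\delta^{-2/(d+2)}$. The only cosmetic difference is that you optimize over all $R>0$ and treat $\delta=0$ by letting $R\to+\infty$, whereas the paper restricts to $R>1$ and handles $\delta\geq 1$ by the trivial bound $\norm{q_1-q_2}_{L^2(\Omega)}\leq 2M$.
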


\subsubsection{Proof of Theorem \ref{thm-BLmd3}}
Let us recall from \eqref{k12} that for all $N \geq N_0$, we have
$$
\limsup_{\tau \to +\infty} \abs{S_\tau} \leq c \left( \left( \sum_{n=N}^{+\infty} \norm{\psi_{1,n} -  \psi_{2,n}}_{L^2(\pd \Omega)}^2 \right)^{1 \slash 2} + \sup_{n \geq N} \abs{\lambda_{1,n}-\lambda_{2,n}} \right),
$$
for some positive constant $c$ that is independent of $N$ and $\xi$. Thus, in light of \eqref{s3} we get upon sending $N$ to infinity, that
\bel{s5} 
\limsup_{\tau \to +\infty} \abs{S_\tau} \leq c \limsup_{n \to \infty} \abs{\lambda_{1,n}-\lambda_{2,n}}.
\ee
Further, we recall from Proposition \ref{pr-iso} that 
$$ \lim_{\tau \to +\infty} S_\tau = \int_{\Omega} q(x) e^{-x \cdot \xi} dx = (2 \pi)^{d \slash 2} \widehat{q}(\xi),$$
where $q$ is the same as in \eqref{p*} and $\widehat{q}$ stands for the Fourier transform $\cF q$ of $q$, defined by \eqref{p0}.
This, \eqref{s5} and the basic estimate $\abs{\lim_{\tau \to +\infty} S_\tau} \leq \limsup_{\tau \to +\infty} \abs{S_\tau}$, yield
$\abs{\widehat{q}(\xi)} \leq (2 \pi)^{-d \slash 2} c \limsup_{n \to +\infty}  \abs{\lambda_{1,n}-\lambda_{2,n}}$, uniformly in $\xi \in \R^d$ . Thus, we obtain
\bel{s6}
\norm{\widehat{q}}_{L^\infty(\R^d)} \leq c \limsup_{n \to +\infty}  \abs{\lambda_{1,n}-\lambda_{2,n}}.
\ee
upon substituting $(2 \pi)^{-d \slash 2} c$ for $c$. 

On the other hand, we infer from \eqref{p*} and the Plancherel theorem that
\bel{s7} 
\norm{q_1-q_2}_{L^2(\Omega)}^2 = \norm{q}_{L^2(\R^d)}^2 = \norm{\widehat{q}}_{L^2(\R^d)}^2 =\int_{\R^d} \abs{\widehat{q}(\xi)}^2 d \xi. \ee
For $R \in (1,+\infty)$ fixed, set $B_R:= \{ \xi \in \R^d,\ \abs{\xi} \leq R\}$ and notice from \eqref{s7} that
\bel{s8}
\norm{q_1-q_2}_{L^2(\Omega)}^2  =  \int_{B_R} \abs{\widehat{q}(\xi)}^2 d \xi + \int_{\R^d \setminus B_R} \abs{\widehat{q}(\xi)}^2 d \xi. 
\ee
The first term in the right hand side of \eqref{s8} is easily treated, as we have
\bel{s8b}  
\int_{B_R} \abs{\widehat{q}(\xi)}^2 d \xi \leq \tilde{c} R^d \norm{\widehat{q}}_{L^\infty(B_R)}^2,
\ee
for some positive constant $\tilde{c}$ that is independent of $R$. Further, since $q_1-q_2 \in H_0^1(\Omega)$ from \eqref{s2}, we see that  $q \in H^1(\R^d)$. Thus we may write
$$ \int_{\R^d \setminus B_R} (1+\abs{\xi}^2) \abs{\widehat{q}(\xi)}^2 d \xi= \norm{q}_{H^1(\R^d)}^2\\
= \norm{q_1-q_2}_{H^1(\Omega)}^2 \leq  \left( \norm{q_1}_{H^1(\Omega)} + \norm{q_2}_{H^1(\Omega)} \right)^2 \leq 4M^2, $$
from \eqref{s1}, and consequently
$$ \int_{\R^d \setminus B_R} \abs{\widehat{q}(\xi)}^2 d \xi \leq R^{-2} \int_{\R^d \setminus B_R} (1+\abs{\xi}^2) \abs{\widehat{q}(\xi)}^2 d \xi \leq 4 M^2 R^{-2}. $$
Putting this and \eqref{s8}-\eqref{s8b} together, we find that
\bel{s9}
\norm{q_1-q_2}_{L^2(\Omega)}^2  \leq \tilde{c} \left( R^d  \norm{\widehat{q}}_{L^\infty(B_R)}^2 + R^{-2} \right),
\ee
upon possibly substituting $\max(\tilde{c},4 M^2)$ for $\tilde{c}$.

Set $\delta:=\limsup_{n \to +\infty} \abs{\lambda_{1,n}-\lambda_{2,n}}$. We shall examine the two cases $\delta \in (0,1)$ and $\delta \in [1,+\infty)$ separately.
In the first case we plug the estimate $\norm{\widehat{q}}_{L^\infty(B_R)} \leq c \delta$, arising from \eqref{s6}, in \eqref{s9}, choose $R=\delta^{-2 \slash (d+2)}$, and get 
\bel{s10}
\norm{q_1-q_2}_{L^2(\Omega)}  \leq C \delta^{2 \slash (d+2)},
\ee
with  $C:=\left( \tilde{c}(1+c^2) \right)^{1 \slash 2}$.
In the second case we have obviously
$$
\norm{q_1-q_2}_{L^2(\Omega)}  \leq \norm{q_1}_{L^2(\Omega)} + \norm{q_2}_{L^2(\Omega)} \leq 2 M \leq 2M \delta^{2 \slash (d+2)},\ \delta \in [1,+\infty),
$$
so the desired result follows from this and \eqref{s10}.


\section{Application to parabolic inverse coefficient problems}
\label{sec-appl:par}
\setcounter{equation}{0}



Let $T \in (0,+\infty)$, let $\Omega$ be as in the preceding sections, that is $\Omega \subset \R^d$, $d \geq 2$, is a bounded domain with boundary $\pd \Omega \in C^{1,1}$.
We consider the diffusion equation
\bel{a1}
\left\{ \begin{array}{rcll}
(\pd_t - \Delta + q) u & = & 0 & \mbox{in}\ Q:= \Omega \times (0,T)  \\
u & = & f & \mbox{on}\ \Sigma := \pd \Omega \times (0,T) \\
u(\cdot,0) & = & 0 & \mbox{in}\ \Omega, \end{array} \right.
\ee
where $q$ is a real-valued bounded potential and $f$ fulfills the compatibility condition: 
$$f(\cdot,0)=0\ \mbox{on}\ \pd \Omega.$$

The inverse problem we examine in this section can be stated as follows. Given $M \in (0,+\infty)$ and two open subsets $\Gi$ and $\Go$ of $\pd \Omega$, determine 
$$ q \in Q_M := \{ q \in L^\infty(\Omega,\R),\ \norm{q}_{L^\infty(\Omega)} \leq M \} $$
by knowledge of the parabolic partial DN map at one fixed time $T_0 \in (0,T)$:
$$ \Lambda_q : f \in \sHi \mapsto \partial_\nu(\cdot,T_0)_{| \Go}. $$
Here, we have set $\sHi := \{ f \in \sH,\ \supp f \subset  \Gi \times (0,T_0) \}$, with $\sH:= C^{1,\alpha}([0,T],H^{3 \slash 2}(\pd \Omega))$ for some $\alpha \in (0,1]$. 

\begin{remark} Since $\supp f \subset (0,T_0) \times \Gi$ for any $f \in \sHi$, then the compatibility condition $f(\cdot,0)=0$ holds on $\pd \Omega$.
\end{remark}

Evidently, the inverse problem under investigation can be reformulated as whether the mapping $q \in Q_M \mapsto \Lambda_q$ is injective.


\subsection{Parabolic Dirichlet-to-Neumann map and identifiability}
\label{sec-DN}

We start by recalling the following uniqueness and existence result (see e.g. \cite[Section 3.5]{C}).
\begin{proposition}
\label{pr-eu}
For all $q \in Q_M$ and all $f \in \sH$, there exists a unique solution
$$u \in \sZ := C^1((0,T],H^2(\Omega)) \cap C^0([0,T],L^2(\Omega)) $$ 
to \eqref{a1}. 
\end{proposition}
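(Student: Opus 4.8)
The plan is to treat uniqueness by a standard energy estimate and existence by reducing \eqref{a1} to a homogeneous Dirichlet problem solved through the analytic semigroup generated by $A_q$, the only genuinely technical point being the $\sZ$-regularity.

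For uniqueness, suppose $u_1, u_2 \in \sZ$ both solve \eqref{a1}. Then $w := u_1 - u_2$ satisfies $(\pd_t - \Delta + q) w = 0$ in $Q$, $w = 0$ on $\Sigma$ and $w(\cdot,0) = 0$. Since $u_i(t) \in H^2(\Omega)$ and $w(t)_{|\pd \Omega} = 0$, we have $w(t) \in H^2(\Omega) \cap H_0^1(\Omega) = D(A_q)$ for $t \in (0,T]$, and $t \mapsto w(t)$ is $C^1$ with values in $H^2(\Omega)$. Hence, for $0 < \eps < t \leq T$ I would compute, using the Green formula together with $w(t)_{|\pd \Omega}=0$, the reality of $q$ and \eqref{bb},
$$ \frac{d}{dt} \norm{w(t)}_{L^2(\Omega)}^2 = 2\Pre{\langle \pd_t w(t), w(t) \rangle_{L^2(\Omega)}} = -2 \norm{\nabla w(t)}_{L^2(\Omega)}^2 - 2 \Pre{\langle q\, w(t), w(t) \rangle_{L^2(\Omega)}} \leq 2M \norm{w(t)}_{L^2(\Omega)}^2. $$
Grönwall's lemma yields $\norm{w(t)}_{L^2(\Omega)}^2 \leq e^{2M(t-\eps)} \norm{w(\eps)}_{L^2(\Omega)}^2$, and letting $\eps \downarrow 0$ gives $w \equiv 0$ since $w \in C^0([0,T],L^2(\Omega))$ and $w(\cdot,0) = 0$.

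For existence, I would first lift the boundary datum: as the trace $\tau_0$ is onto $H^{3/2}(\pd\Omega)$ it admits a bounded linear right inverse $E$, so $F(t) := E f(t)$ belongs to $C^{1,\alpha}([0,T],H^2(\Omega))$ with $\tau_0 F(t) = f(t)$ and, by the compatibility condition $f(\cdot,0)=0$, $F(\cdot,0)=0$. Then $u$ solves \eqref{a1} iff $v := u - F$ solves the homogeneous Dirichlet problem with source $g := -(\pd_t - \Delta + q) F \in C^\alpha([0,T],L^2(\Omega))$ and $v(\cdot,0) = 0$. Recalling that $A_q$ is self-adjoint with $\Sp(A_q) \subset [-M,+\infty)$, it is sectorial and generates an analytic semigroup $(e^{-tA_q})_{t \geq 0}$ on $L^2(\Omega)$; the Duhamel formula $v(t) := \int_0^t e^{-(t-s)A_q} g(s)\, ds$ then defines, by the classical theory of abstract parabolic equations with Hölder continuous inhomogeneity (see e.g.\ \cite[Section 3.5]{C}), a function $v \in C^1((0,T],L^2(\Omega)) \cap C^0((0,T],D(A_q)) \cap C^0([0,T],L^2(\Omega))$ with $\pd_t v + A_q v = g$ and $v(0)=0$, so that $u := v + F$ solves \eqref{a1}.

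The delicate point, which I expect to be the main obstacle, is the finer claim $v \in C^1((0,T],H^2(\Omega))$, since the Duhamel integral of a merely Hölder, $L^2(\Omega)$-valued source is only $C^1$ into $L^2(\Omega)$ and $C^0$ into $D(A_q)$. To upgrade it I would exploit the structure $g = -\pd_t F + (\Delta F - q F)$ and split $v = v^{(a)} + v^{(b)}$ accordingly. For $v^{(a)}$, whose source $-\pd_t F$ lies in $C^\alpha([0,T],H^2(\Omega)) = C^\alpha([0,T],D(A_q))$, one may pull $A_q$ inside the integral, so that $A_q v^{(a)}$ is again a Duhamel integral with Hölder $L^2(\Omega)$-valued source; this gives $A_q^2 v^{(a)} \in C^0((0,T],L^2(\Omega))$, whence $\pd_t v^{(a)} = -A_q v^{(a)} - \pd_t F \in C^0((0,T],D(A_q))$. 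For $v^{(b)}$, with source $h := \Delta F - q F \in C^{1,\alpha}([0,T],L^2(\Omega))$ and $h(0)=0$, an integration by parts in $s$ in the Duhamel integral moves the time derivative onto $h$, yielding $\pd_t v^{(b)}(t) = \int_0^t e^{-(t-s)A_q} h'(s)\, ds$, once more a Duhamel integral with Hölder $L^2(\Omega)$-valued source, hence $\pd_t v^{(b)} \in C^0((0,T],D(A_q))$. By the norm equivalence \eqref{i2} this shows $v \in C^1((0,T],H^2(\Omega))$, and since $F \in C^{1,\alpha}([0,T],H^2(\Omega))$ we conclude $u = v + F \in \sZ$.
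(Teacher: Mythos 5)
You should first be aware that the paper does not actually prove Proposition \ref{pr-eu}; it is quoted from \cite[Section 3.5]{C}, so there is no in-text argument to compare yours against. That said, your uniqueness argument (Green's formula with the boundary term killed by $w(t)\in H^2(\Omega)\cap H_0^1(\Omega)$, then Gr\"onwall on $[\eps,t]$ and $\eps\downarrow 0$ using $w\in C^0([0,T],L^2(\Omega))$) is correct, and so is the basic existence construction: lifting $f$ through a bounded right inverse of $\tau_0$, reducing to $\pd_t v+A_qv=g$ with $g\in C^{0,\alpha}([0,T],L^2(\Omega))$ and $v(0)=0$, and invoking the theory of the analytic semigroup generated by the self-adjoint, lower semibounded operator $A_q$. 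Up to the class $C^1((0,T],L^2(\Omega))\cap C^0((0,T],D(A_q))\cap C^0([0,T],L^2(\Omega))$ this is the standard route and is sound.

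The gap is in your treatment of $v^{(a)}$. You assert $C^\alpha([0,T],H^2(\Omega))=C^\alpha([0,T],D(A_q))$, but $D(A_q)=H^2(\Omega)\cap H_0^1(\Omega)$ is a proper subspace of $H^2(\Omega)$: since $\tau_0 F(t)=f(t)$, one has $\tau_0\,\pd_t F(t)=\pd_t f(t)$, which is nonzero in general, so $\pd_t F(s)\notin D(A_q)$. The commutation $A_qe^{-(t-s)A_q}\pd_sF(s)=e^{-(t-s)A_q}A_q\pd_sF(s)$ that you need in order to ``pull $A_q$ inside the integral'' is therefore unavailable (the right-hand side is not even defined), and the only substitute, $\norm{A_qe^{-(t-s)A_q}}_{\cB(L^2(\Omega))}\leq C(t-s)^{-1}$, is not integrable near $s=t$; so the claim that $A_qv^{(a)}$ is again a Duhamel integral with a H\"older $L^2(\Omega)$-valued source does not follow. (Your $v^{(b)}$ step is fine, since $h(0)=0$ legitimizes the integration by parts in $s$.) Note also that the target statement $\pd_t u(\cdot,t)\in H^2(\Omega)$ is exactly the statement that $\Delta u(\cdot,t)-qu(\cdot,t)\in H^2(\Omega)$, which cannot be read off the equation termwise when $q$ is merely bounded; the natural repair is to differentiate the boundary value problem in time rather than the Duhamel formula. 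Since $f\in C^{1,\alpha}([0,T],H^{3\slash2}(\pd\Omega))$ and the compatibility condition forces the time-differentiated problem to have zero initial datum, the basic theory applies to it with boundary datum $\pd_t f\in C^{0,\alpha}([0,T],H^{3\slash2}(\pd\Omega))$ and produces a solution $\tilde u\in C^0((0,T],H^2(\Omega))$ with a quantitative blow-up rate integrable at $t=0$; one then identifies $u(t)=\int_0^t\tilde u(s)\,ds$ via the uniqueness you already established, whence $\pd_t u=\tilde u\in C^0((0,T],H^2(\Omega))$ as required.
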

Thus, by continuity of the trace operator $g \to (\partial_{\nu} g)_{| \Go}$ from $H^2(\Omega)$ to $H^{1 \slash 2}(\Go)$, the map
$$ \begin{array}{cccc}
\Lambda_q : & \sHi & \to & H^{1\slash 2}(\Go) \\ & f & \mapsto & \pd_\nu u(\cdot,T_0)_{| \Go} 
\end{array} $$
is well-defined. 

The main result if this section is as follows.
\begin{theorem}
\label{thm-main}
Assume that $\Gi \cup \Go=\pd \Omega$ and that $\Gi \cap \Go \neq \emptyset$.
For $j=1,2$, let $q_j \in Q_M$ and put $\Lambda_j:=\Lambda_{q_j}$. Then, we have the implication:
$$ \left( \forall f \in \sHi,\ \Lambda_1(f)=\Lambda_2(f) \right) \Longrightarrow (q_1=q_2). $$
\end{theorem}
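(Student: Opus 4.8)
The plan is to deduce from $\Lambda_1=\Lambda_2$ that $\bsd(q_1)=\bsd(q_2)$ for suitable $L^2(\Omega)$-orthonormal bases of eigenfunctions, and then to conclude with the incomplete Borg-Levinson theorem (Theorem \ref{thm-BLmd1}, applied with $N=1$). The bridge is the spectral representation of the solution to \eqref{a1}. For boundary data of product form $f=\theta\otimes g$, with $g\in C_0^\infty(\Gi)$ (extended by $0$ on $\pd\Omega$) and $\theta\in C_0^\infty(0,T_0)$, so that $f\in\sHi$, I would first lift $g$ to some $G\in H^2(\Omega)$, write $u=\theta(t)G+v$, solve for $v$ by Duhamel's formula, and use Green's formula together with $A_q\varphi_n=\lambda_n\varphi_n$ to obtain
\[
u(\cdot,t)=-\sum_{n=1}^{+\infty}\left(\int_0^t e^{-\lambda_n(t-s)}\theta(s)\,ds\right)\langle g,\psi_n\rangle_{L^2(\pd\Omega)}\,\varphi_n\quad\text{in }L^2(\Omega).
\]
Since $\supp\theta$ is a compact subset of $(0,T_0)$, the coefficients $c_n:=\int_0^{T_0}e^{-\lambda_n(T_0-s)}\theta(s)\,ds$ decay exponentially in $\lambda_n$; combined with \eqref{i2b}--\eqref{i2c} and the Weyl asymptotics of $\{\lambda_n\}$, this makes the series converge in $H^2(\Omega)$ at $t=T_0$, so that $\Lambda_q(f)=\pd_\nu u(\cdot,T_0)_{|\Go}=-\sum_n c_n\langle g,\psi_n\rangle_{L^2(\Gi)}(\psi_n)_{|\Go}$ in $H^{1/2}(\Go)$.

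Second, I would exploit the freedom in $\theta$. Pairing $\Lambda_1(f)=\Lambda_2(f)$ with an arbitrary $h\in C_0^\infty(\Go)$ and substituting $\sigma=T_0-s$, the identity reads $\int_0^{T_0}\Phi_{g,h}(\sigma)\,\theta(T_0-\sigma)\,d\sigma=0$ for all $\theta\in C_0^\infty(0,T_0)$, where $\Phi_{g,h}(\sigma):=\sum_n e^{-\lambda_{1,n}\sigma}a_n^{(1)}-\sum_n e^{-\lambda_{2,n}\sigma}a_n^{(2)}$ with $a_n^{(j)}:=\langle g,\psi_{j,n}\rangle_{L^2(\Gi)}\overline{\langle h,\psi_{j,n}\rangle_{L^2(\Go)}}$. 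Both Dirichlet series converge locally uniformly on $(0,+\infty)$ and define a real-analytic function there; since $\Phi_{g,h}$ vanishes on $(0,T_0)$ it vanishes on $(0,+\infty)$, and the linear independence of the exponentials $\{e^{-\mu\sigma}\}_{\mu\in\R}$ then yields, for every $\mu\in\R$,
\[
\sum_{\lambda_{1,n}=\mu}\langle g,\psi_{1,n}\rangle_{L^2(\Gi)}\overline{\langle h,\psi_{1,n}\rangle_{L^2(\Go)}}=\sum_{\lambda_{2,n}=\mu}\langle g,\psi_{2,n}\rangle_{L^2(\Gi)}\overline{\langle h,\psi_{2,n}\rangle_{L^2(\Go)}}.
\]
Equivalently, writing $K_\mu^{(j)}(x,y):=\sum_{\lambda_{j,n}=\mu}\psi_{j,n}(x)\overline{\psi_{j,n}(y)}$, one has $K_\mu^{(1)}=K_\mu^{(2)}$ on $\Gi\times\Go$ for every $\mu$.

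Third --- and this is where the hypotheses $\Gi\cap\Go\neq\emptyset$ and $\Gi\cup\Go=\pd\Omega$ enter --- I would upgrade this to $K_\mu^{(1)}=K_\mu^{(2)}$ on all of $\pd\Omega\times\pd\Omega$, using the unique continuation property from Cauchy data on an open piece of the boundary for $-\Delta+q_j-\mu$ (valid for $q_j\in L^\infty(\Omega)$): a $\mu$-eigenfunction of $A_{q_j}$ with vanishing normal derivative on a nonempty relatively open subset of $\pd\Omega$ vanishes identically, so the restriction to any such subset of the finite-dimensional normal-trace space $W_\mu^{(j)}:=\mathrm{span}\{\psi_{j,n}:\lambda_{j,n}=\mu\}$ is injective. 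Choosing $g,h$ so as to make the rank-$\dim W_\mu^{(j)}$ kernel $K_\mu^{(j)}$ nondegenerate on $\Gi\times\Go$ first forces $\Sp(A_{q_1})=\Sp(A_{q_2})$ with matching multiplicities. Next, for $x$ in the nonempty open set $\Gi\cap\Go$, Hermitian symmetry of $K_\mu^{(j)}$ and $\Gi\cup\Go=\pd\Omega$ show that $K_\mu^{(j)}(x,\cdot)$ is already known on all of $\pd\Omega$; as $x$ ranges over $\Gi\cap\Go$ these functions span $\overline{W_\mu^{(j)}}$, whence $W_\mu^{(1)}=W_\mu^{(2)}=:W_\mu$; and for any remaining boundary point $x$ the function $K_\mu^{(j)}(x,\cdot)$ is recovered from its restriction to $\Gi$ or to $\Go$ by the injectivity just recalled. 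Thus the operators $h\mapsto\sum_{\lambda_{j,n}=\mu}\langle h,\psi_{j,n}\rangle\psi_{j,n}$ coincide for $j=1,2$; a standard argument produces a unitary matrix $U_\mu$ with $\psi_{2,n}=\sum_m(U_\mu)_{mn}\psi_{1,m}$, so that replacing $\{\varphi_{1,n}\}$ by the orthonormal basis $\{\sum_m(U_\mu)_{mn}\varphi_{1,m}\}$ of each eigenspace turns $\bsd(q_1)$ into $\bsd(q_2)$. Theorem \ref{thm-BLmd1} then gives $q_1=q_2$.

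The main obstacle is this third step: passing from the partial kernel identity on $\Gi\times\Go$ to the full one on $\pd\Omega\times\pd\Omega$, which genuinely uses both topological assumptions on $(\Gi,\Go)$ and, more essentially, unique continuation for Schr\"odinger operators with merely bounded potentials. A secondary technical point is the convergence and analyticity bookkeeping of the second step: the Dirichlet series only converge away from $\sigma=0$, which is precisely why $\supp\theta$ must be a compact subset of $(0,T_0)$. The existence, uniqueness, and the regularity $u\in C^1((0,T],H^2(\Omega))$ needed for $\pd_\nu u(\cdot,T_0)$ to make sense are supplied by Proposition \ref{pr-eu}.
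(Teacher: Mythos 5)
Your proposal is correct and follows essentially the same route as the paper: product-form inputs and the spectral representation of the parabolic DN map, analytic continuation in time of the resulting generalized Dirichlet series, separation of the eigenvalues via the linear independence of exponentials together with unique continuation from Cauchy data on $\Gi\cap\Go$ (the paper's Lemmas \ref{lm-ucp} and \ref{lm-nz}), recovery of the full Neumann kernels on $(\Gi\cup\Go)\times(\Gi\cup\Go)=\pd\Omega\times\pd\Omega$ up to an orthogonal change of basis in each eigenspace, and finally Theorem \ref{thm-BLmd1}. Your third step phrases the basis-matching abstractly via Hermitian symmetry and a Gram-matrix argument where the paper evaluates the kernels at well-chosen points of $\Gi\cap\Go$ to build the orthogonal matrix explicitly, but the two arguments use the hypotheses $\Gi\cap\Go\neq\emptyset$ and $\Gi\cup\Go=\pd\Omega$ in the same essential way.
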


\begin{remark}
This result was proved by B. Canuto and O. Kavian in \cite{CK}. Recently, in \cite{KOSY}, it was extended  to the case of time-fractional diffusion equations $(\pd_t^\alpha - \Delta + q) u = 0$ in $Q$, with $\alpha \in (0,1) \cup (1,2)$, and where $\pd_t^\alpha$ denotes the Caputo fractional derivative of order $\alpha$.
\end{remark}


\subsection{Technical tools}

The proof of Theorem \ref{thm-main} consists of two steps. 
\begin{itemize}
\item The first one is to show that knowledge of $\Lambda_q$ uniquely determines\footnote{Or, equivalently, the BSD associated with the Dirichlet Laplacian $A_q$, defined by \eqref{i0}.} $\bsd(q)$ :
\bel{ep0}
\left( \forall f \in \sHi,\ \Lambda_1(f) = \Lambda_2(f) \right) \Longrightarrow \left( \bsd(q_1) = \bsd(q_2) \right).
\ee
\item The second step is to identify $q$ through $\bsd(q)$, with the aid of Theorem \ref{thm-BLmd1}.
\end{itemize}

In \cite{KKLM}, A. Katchalov, Y. Kurylev, M. Lassas and C. Mandache have established the equivalence between the full parabolic DN map and the BSD. Their statement is quite similar to the claim of the first step, except that this is the partial data 
$\Lambda_q$ (and not the full parabolic DN map) that is considered here and that we only seek determination of $\bsd(q)$ by $\Lambda_q$ (and not equivalence of these two data).

\subsubsection{Some notations and useful properties}
We stick with the notations of Section \ref{sec-BLth}. That is to say that for $j=1,2$, 
we write $\bsd(q_j) = \{(\lambda_{j,n},\psi_{j,n}),\ n \in \N \}$ with $\psi_{j,n}=\pd_\nu \varphi_{j,n}$.\\

\paragraph{\it Weyl's law.} It is well known (see e.g. \cite[Section XIII.15]{RS4}) that there exist two constants $n_M \in \N$ and $c \in (1,+\infty)$, both of them depending only on $\Omega$ and $M$, such that we have
$$c^{-1} n^{2 \slash d} \leq \lambda_n \leq c n^{2 \slash d},\ n \geq n_M. $$
This entails for all $k\in \N$ and all $\eps >0$, that the series
\bel{p2}
\sum_{n=n_M}^{+\infty} \lambda_n^k e^{-\eps \lambda_n} \leq c^k \sum_{n=n_M}^{+\infty} n^{2 k \slash d} e^{-c^{-1} \eps n^{2 \slash d}} < \infty.
\ee

\paragraph{\it  Linear independence of the Neumann data.}
Given a non-empty open subset $\Gamma$ of $\pd \Omega$, the family $\{ {\psi_n}_{| \Gamma},\ n \in \N \}$ is, in general, not linearly independent in $L^2(\Gamma)$, but the normal derivatives of the eigenfunctions associated with one eigenvalue are linearly independent. More precisely, if $m_n$ denotes the geometric multiplicity of $\lambda_n$, let $\{ \varphi_{n,i},\ i=1,\ldots,m_n \}$ be an orthonormal basis of the $L^2(\Omega)$-subspace $\ker(A_q-\lambda_n)$. Then, we have
\bel{p3}
\dim \{  {\psi_{n,i}}_{| \Gamma},\ i=1,\ldots,m_n \} = m_n. 
 \ee
The proof of \eqref{p3} essentially relies on the following unique continuation principle for local Cauchy data.

\begin{lemma}
\label{lm-ucp}
For $j,k=1,\ldots,d$, let $a_{j,k}=a_{k,j} \in W^{1,\infty}(\Omega)$, $b_j \in L^\infty(\Omega)$ and $c \in L^\infty(\Omega)$, and suppose that the differential operator
$$ P := -\sum_{j,k=1}^d a_{j,k}(x) \pd_{j,k}^2 + \sum_{j=1}^d b_j(x) \pd_j + c(x), $$
fulfills the ellipticity condition:
$$ \exists \lambda \in (0,+\infty),\ \sum_{j,k} a_{j,k}(x) \xi_j \xi_k \geq \lambda \abs{\xi}^2,\ x \in \Omega,\ \xi \in \R^d. $$
Then, for all $u \in H^2(\Omega)$, we have:
$$ ( Pu =0,\ u_{| \Gamma} = \pd_{\nu} u_{| \Gamma} = 0) \Longrightarrow ( u=0\ \mbox{in}\ \Omega). $$
\end{lemma}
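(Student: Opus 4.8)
The statement to prove is Lemma~\ref{lm-ucp}, a unique continuation result from local Cauchy data for a second-order elliptic operator $P$ with $W^{1,\infty}$ principal part and bounded lower-order coefficients.

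\textbf{Overall strategy.} The plan is to reduce the boundary unique continuation statement to an interior unique continuation statement across a hypersurface, and then invoke a classical strong unique continuation / Carleman-estimate theorem for second-order elliptic operators. Concretely, given $u \in H^2(\Omega)$ with $Pu = 0$ in $\Omega$ and $u_{|\Gamma} = \partial_\nu u_{|\Gamma} = 0$ on the non-empty open piece $\Gamma \subset \partial\Omega$, I would first localize near an interior point of $\Gamma$: choose $x_0 \in \Gamma$ and a small ball $B = B(x_0,r)$ such that $B \cap \partial\Omega \subset \Gamma$ and $B \cap \partial\Omega$ is a $C^{1,1}$ (indeed one may take it smooth after a local change of coordinates) hypersurface splitting $B$ into $B^- := B \cap \Omega$ and $B^+ := B \setminus \overline{\Omega}$. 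Define $\tilde u$ on $B$ by $\tilde u = u$ on $B^-$ and $\tilde u = 0$ on $B^+$. Because $u \in H^2(\Omega)$ vanishes to first order on $\Gamma$ (both the trace $u_{|\Gamma}$ and the normal derivative $\partial_\nu u_{|\Gamma}$ vanish, and the tangential derivatives of $u_{|\Gamma}$ vanish as well since $u_{|\Gamma}=0$), the function $\tilde u$ lies in $H^2(B)$ with no distributional contribution on $B \cap \partial\Omega$. Extending the coefficients $a_{j,k}, b_j, c$ across $B \cap \partial\Omega$ (arbitrarily, keeping ellipticity, $W^{1,\infty}$ regularity of the $a_{j,k}$ and boundedness of the rest — this is possible by standard Sobolev/Lipschitz extension), one checks that $\tilde u$ satisfies $\tilde P \tilde u = 0$ in all of $B$ in the weak/strong sense, where $\tilde P$ is the extended operator.

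\textbf{Key steps in order.} (1) Localize: pick $x_0 \in \Gamma$, flatten the boundary near $x_0$, and set up $B$, $B^\pm$ as above. (2) Glue: verify that $\tilde u \in H^2(B)$ and $\tilde P \tilde u = 0$ in $B$; the crossing term on $B \cap \partial\Omega$ vanishes precisely because the full Cauchy data $(u, \partial_\nu u)$ vanish there, so integrating by parts against a test function produces no surface integral. (3) Apply interior unique continuation: since $\tilde u \equiv 0$ on the open set $B^+$ and $\tilde P \tilde u = 0$ on the connected open set $B$ with $\tilde P$ a second-order elliptic operator whose leading coefficients are Lipschitz, the classical unique continuation theorem (e.g. Aronszajn, or the Carleman-estimate approach of Hörmander; see also Koch–Tataru for Lipschitz leading coefficients and bounded lower-order terms) gives $\tilde u \equiv 0$ on $B$, hence $u \equiv 0$ on $B^- = B \cap \Omega$. (4) Propagate: $u$ vanishes on a non-empty open subset of the connected open set $\Omega$ and $Pu = 0$ there, so another application of interior unique continuation (a chain-of-balls connectedness argument) yields $u \equiv 0$ in all of $\Omega$.

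\textbf{Main obstacle.} The delicate point is Step~(2): making rigorous that $\tilde u \in H^2(B)$ and solves the extended equation with \emph{no} singular contribution concentrated on the interface. This requires the $C^{1,1}$ regularity of $\partial\Omega$ (so that the flattening diffeomorphism preserves $H^2$ and $W^{1,\infty}$ coefficients) and the vanishing of the \emph{entire} first-order jet of $u$ along $\Gamma$, which is exactly what the hypothesis $u_{|\Gamma} = \partial_\nu u_{|\Gamma} = 0$ supplies (the tangential part of $\nabla u$ on $\Gamma$ being the tangential gradient of $u_{|\Gamma} = 0$). Once this gluing is justified, Steps~(3)–(4) are a direct citation of the standard elliptic unique continuation theorem, which is valid under precisely the hypotheses assumed on $P$ (Lipschitz principal part, bounded drift and zeroth-order term). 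I would therefore spend the bulk of the write-up on the interface argument and merely cite the unique continuation principle for the rest.
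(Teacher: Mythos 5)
Your proposal cannot be compared against a proof in the paper, because the paper gives none: Lemma~\ref{lm-ucp} is stated there as a known classical unique continuation principle for local Cauchy data and is used as a black box to derive \eqref{p3}, with no argument or even an explicit reference supplied. That said, your outline is the standard and correct way to prove it, and I see no genuine gap. Two small remarks. First, your Step~(2) is the right place to concentrate the effort, and your justification is sound: since $u_{|\Gamma}=0$ on the relatively open set $\Gamma$, the tangential part of $\nabla u$ vanishes there, and together with $\pd_\nu u_{|\Gamma}=0$ this gives vanishing of the full trace of $\nabla u$ on $\Gamma$, so $u$ and each $\pd_j u$ are $H^1$ functions with zero trace on $B\cap\pd\Omega$ and their zero extensions lie in $H^1(B)$, whence $\tilde u\in H^2(B)$. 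Once that is established you can skip the integration-by-parts verification entirely: $\tilde P\tilde u$ is an $L^2(B)$ function computed pointwise a.e., it vanishes a.e.\ on $B^-$ and on $B^+$, and the interface is Lebesgue-null, so $\tilde P\tilde u=0$ a.e.\ in $B$ with no possible singular contribution. Second, the interior unique continuation theorem you invoke (Aronszajn--Krzywicki--Szarski, or the Carleman-estimate versions) applies exactly under the stated hypotheses --- Lipschitz principal part, which is the sharp condition in view of Pli\'s' counterexamples, and merely bounded lower-order coefficients --- and the chain-of-balls propagation through the connected set $\Omega$ finishes the argument. It would be worth adding an explicit citation for the interior theorem, since that is the one nontrivial external input.
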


To show \eqref{p3}, we pick $m_n$ constants $c_i$, $i=1,\ldots,m_n$, such that 
$$ \sum_{i=1}^{m_n} c_i \psi_{n,i} = 0\ \mbox{on}\ \Gamma. $$
Putting $\varphi:=\sum_{i=1}^{m_n} c_i \varphi_{n,i}$, we see that $\varphi$ solves
$$ \left\{ \begin{array}{rcll} A_q \varphi & = & \lambda_n \varphi & \mbox{in}\  \Omega \\
\varphi & = & 0 & \mbox{on}\ \Gamma \\
\pd_\nu \varphi & = & \sum_{i=1}^{m_n} c_i \psi_{n,i}  = 0 & \mbox{on}\ \Gamma. \end{array} \right. $$
Thus, upon applying Lemma \ref{lm-ucp} with $a_{jk}=\delta_{jk}$ and $b_j=0$ for $j,k=1,\ldots,d$, and $c=q$, we get tat $\varphi=0$ in $\Omega$. Here and below, $\delta$ denotes the Kronecker symbol, i.e.
$$ \delta_{jk}:= \left\{ \begin{array}{ll} 1 & \mbox{if}\ j=k \\ 0 & \mbox{otherwise}. \end{array} \right.$$
Next, since  the family
$\{ \varphi_{n,i},\ i=1,\ldots,m_n \}$ is  linearly independent in $L^2(\Omega)$, we obtain that $c_i=0$ for all $i=1,\ldots,m_n$. Therefore, the ${\psi_{n,i}}_{| \Gamma}$, $i=1,\ldots,m_n$, are linearly independent in $L^2(\Gamma)$, which establishes \eqref{p3}.\\

We may now prove the:
\begin{lemma}
\label{lm-nz}
Let $\Gamma$ and $\Gamma'$ be two non-empty open subsets of $\pd \Omega$. Then, the function
$$ \theta_n(\sigma,\sigma')= \sum_{i=1}^{m_n} \psi_{n,i}(\sigma) \psi_{n,i}(\sigma'),\ (\sigma,\sigma') \in \Gamma \times \Gamma', $$
is not identically zero in $\Gamma \times \Gamma'$.
\end{lemma}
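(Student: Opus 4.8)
The plan is to argue by contradiction, assuming $\theta_n \equiv 0$ on $\Gamma \times \Gamma'$, and derive that the family $\{ {\psi_{n,i}}_{| \Gamma},\ i=1,\ldots,m_n \}$ is linearly dependent in $L^2(\Gamma)$ (or that one of the $\psi_{n,i}$ vanishes on $\Gamma$), contradicting \eqref{p3}. The crucial observation is that $\theta_n$ is, in the $\sigma'$ variable, a (finite) linear combination of the functions ${\psi_{n,i}}_{| \Gamma'}$ with coefficients $\psi_{n,i}(\sigma)$, so if $\theta_n(\sigma,\cdot) \equiv 0$ on $\Gamma'$ for a given $\sigma$, the linear independence of the ${\psi_{n,i}}_{| \Gamma'}$ forces $\psi_{n,i}(\sigma)=0$ for every $i$.

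First I would fix a point $\sigma \in \Gamma$. The function $\sigma' \mapsto \theta_n(\sigma,\sigma') = \sum_{i=1}^{m_n} \psi_{n,i}(\sigma) \psi_{n,i}(\sigma')$ is then an element of $\mathrm{span}\{ {\psi_{n,i}}_{| \Gamma'},\ i=1,\ldots,m_n \}$, the coefficients being the scalars $\psi_{n,i}(\sigma)$. If $\theta_n \equiv 0$ on $\Gamma \times \Gamma'$, then in particular $\theta_n(\sigma,\cdot) = 0$ in $L^2(\Gamma')$ for every $\sigma \in \Gamma$. By \eqref{p3} applied to the open set $\Gamma'$, the family $\{ {\psi_{n,i}}_{| \Gamma'},\ i=1,\ldots,m_n \}$ is linearly independent in $L^2(\Gamma')$; hence all the coefficients must vanish, i.e. $\psi_{n,i}(\sigma)=0$ for all $i=1,\ldots,m_n$. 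Since $\sigma \in \Gamma$ was arbitrary, this gives ${\psi_{n,i}}_{| \Gamma}=0$ for every $i$, so in particular $\dim \{ {\psi_{n,i}}_{| \Gamma},\ i=1,\ldots,m_n \} = 0 < m_n$, contradicting \eqref{p3} applied to the open set $\Gamma$ (note $m_n \geq 1$). Therefore $\theta_n$ cannot be identically zero.

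One mild technical point to address is the regularity of $\sigma \mapsto \psi_{n,i}(\sigma)$, needed for the pointwise evaluation above to make sense: this follows from the elliptic regularity $\varphi_{n,i} \in H^2(\Omega)$ and the continuity of the trace operator $\tau_1 : H^2(\Omega) \to H^{1 \slash 2}(\pd \Omega)$, so $\psi_{n,i} \in H^{1 \slash 2}(\pd \Omega)$; to be safe I would replace pointwise evaluation with an integration argument, noting that $\theta_n \equiv 0$ in $L^2(\Gamma \times \Gamma')$ implies, for a.e. $\sigma \in \Gamma$, $\sum_i \psi_{n,i}(\sigma) {\psi_{n,i}}_{| \Gamma'} = 0$ in $L^2(\Gamma')$, hence $\psi_{n,i}(\sigma)=0$ for a.e. $\sigma \in \Gamma$ and every $i$. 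The main obstacle is not really an obstacle here — the argument is short — but one must be careful to invoke \eqref{p3} for both open sets $\Gamma$ and $\Gamma'$, and to handle the a.e./trace regularity cleanly rather than treating $\psi_{n,i}$ as a classical function.
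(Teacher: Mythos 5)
Your proof is correct and follows essentially the same route as the paper: assume $\theta_n\equiv 0$, use the linear independence of $\{{\psi_{n,i}}_{|\Gamma'}\}$ from \eqref{p3} to conclude $\psi_{n,i}(\sigma)=0$ for a.e. $\sigma\in\Gamma$, and contradict \eqref{p3} applied to $\Gamma$. Your extra care with the a.e./trace-regularity point is a reasonable refinement but does not change the argument.
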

\begin{proof}
We prove Lemma \ref{lm-nz} by contradiction. If we assume that $\theta_n(\sigma,\sigma')=0$ for a.e. $(\sigma,\sigma') \in \Gamma \times \Gamma'$, then we would have
$$ \sum_{i=1}^{m_n} \psi_{n,i}(\sigma) \psi_{n,i} = 0\ \mbox{on}\ \Gamma', $$
and hence
$$\psi_{n,i}=0\ \mbox{on}\ \Gamma,\ i=1,\ldots,m_n, $$
from \eqref{p3}, which is in contradiction with \eqref{p3}. Therefore, $\theta_n$ is not identically zero in $\Gamma \times \Gamma'$.
\end{proof}


\subsection{Proof of the parabolic uniqueness result}
\label{sec-proof}

In this section we prove Theorem \ref{thm-main}. In light of Theorem \ref{thm-BLmd1}, it is enough to show \eqref{ep0}.
More precisely, we shall establish the:

\begin{theorem}
\label{thm-eq}
Under the assumptions of Theorem \ref{thm-main}, we have
$$ (\forall f \in \sHi,\ \Lambda_1(f) = \Lambda_2(f) )  \Longrightarrow ( \lambda_{1,n}=\lambda_{2,n}\ \mbox{and}\ \psi_{1,n}=\psi_{2,n}\ \mbox{on}\ \Gi \cup \Go,\ n \in \N) ,$$
up to an appropriate choice of the eigenfunctions $\varphi_{2,n}$ of $A_2$. 
\end{theorem}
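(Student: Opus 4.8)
The plan is to extract $\bsd(q_j)$ from the parabolic partial DN map $\Lambda_j$ via a Laplace transform in time, which converts the parabolic problem into a family of elliptic resolvent problems. Concretely, for $f \in \sHi$ write the solution $u_j$ of \eqref{a1} using the spectral decomposition of $A_{q_j}$, so that $u_j(\cdot,t) = \sum_n \left( \int_0^t e^{-\lambda_{j,n}(t-s)} \langle \partial_\nu f(\cdot,s), \text{(boundary pairing)} \rangle\, ds \right) \varphi_{j,n}$ in a Duhamel form. Taking $\partial_\nu$ and restricting to $\Go$ gives $\Lambda_j(f)$ as a sum involving $\psi_{j,n}$ on $\Go$, the eigenvalues $\lambda_{j,n}$, and the data $f$ tested against $\psi_{j,n}$ on $\Gi$. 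Choosing $f(\sigma,s) = g(\sigma)h(s)$ with $g$ supported in $\Gi$ and $h$ supported in $(0,T_0)$, the hypothesis $\Lambda_1(f)=\Lambda_2(f)$ yields, for a.e. $\sigma'\in\Go$,
\[
\sum_{n} \left( \int_0^{T_0} e^{-\lambda_{1,n}(T_0-s)} h(s)\, ds \right) \langle g, \psi_{1,n} \rangle_{L^2(\Gi)}\, \psi_{1,n}(\sigma') = \sum_{n} \left( \int_0^{T_0} e^{-\lambda_{2,n}(T_0-s)} h(s)\, ds \right) \langle g, \psi_{2,n} \rangle_{L^2(\Gi)}\, \psi_{2,n}(\sigma').
\]
The convergence of these series (uniformly in the relevant parameters) is controlled by Weyl's law \eqref{p2}, which guarantees the exponential damping $e^{-\eps \lambda_n}$ dominates the polynomial growth of $\norm{\psi_n}_{H^{1/2}}$ from \eqref{i2c}.

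Next I would disentangle the spectral information. Grouping terms by distinct eigenvalues and using that $h$ ranges over a rich enough class (e.g. $h$ concentrating so that $\int_0^{T_0} e^{-\lambda(T_0-s)}h(s)ds$ separates different values of $\lambda$ — equivalently exploiting that the Laplace/Dirichlet-series transform is injective), one concludes that the two exponential sums agree term by term after regrouping: the sets $\{\lambda_{1,n}\}$ and $\{\lambda_{2,n}\}$ coincide with multiplicity, and for each common eigenvalue $\lambda$ the corresponding spectral projections, viewed as kernels on $\Gi \times \Go$, agree. That is, setting $\theta_n^{(j)}(\sigma,\sigma') = \sum_{i=1}^{m_{j,n}} \psi_{j,n,i}(\sigma)\psi_{j,n,i}(\sigma')$, we get $\theta_n^{(1)} = \theta_n^{(2)}$ on $\Gi \times \Go$, and in particular $m_{1,n}=m_{2,n}=:m_n$ (the rank of this kernel is $m_n$ by \eqref{p3}, using $\Gi \cap \Go \neq \emptyset$ so that one can localize to $\Gamma = \Gamma' \subset \Gi \cap \Go$ and invoke Lemma \ref{lm-nz}).

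Finally I would reconstruct the individual $\psi_{2,n}$ on $\Gi \cup \Go$ from the kernel. Since $\Gi \cap \Go \neq\emptyset$, pick a nonempty open $\omega \subset \Gi \cap \Go$. The kernel $\theta_n(\sigma,\sigma')$ restricted to $\omega \times \omega$ is a symmetric rank-$m_n$ operator whose range is exactly $\mathrm{span}\{\psi_{1,n,i}|_\omega\} = \mathrm{span}\{\psi_{2,n,i}|_\omega\}$ (these spans coincide since the kernels coincide); hence the $L^2(\Omega)$-eigenspaces $\ker(A_{q_1}-\lambda_n)$ and $\ker(A_{q_2}-\lambda_n)$ have boundary-trace data spanning the same subspace of $L^2(\omega)$, and by unique continuation (Lemma \ref{lm-ucp}) and analyticity there is a unitary map between the eigenspaces compatible with this. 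Then I choose the orthonormal basis $\{\varphi_{2,n,i}\}$ of $\ker(A_{q_2}-\lambda_n)$ precisely so that $\psi_{2,n,i} = \psi_{1,n,i}$ on $\omega$ for each $i$; since $\Gamma \mapsto \{\psi_{n,i}|_\Gamma\}$ is injective on this finite-dimensional space (again \eqref{p3}), equality on $\omega$ forces equality of the corresponding kernels, hence of the spanning families, on all of $\Gi \cup \Go$. The main obstacle I anticipate is the middle step: rigorously justifying that matching the full boundary output of $\Lambda_j$ for all admissible $f$ forces term-by-term agreement of the Dirichlet series — this requires a careful injectivity argument for exponential sums with the eigenvalue asymptotics, handling possible accumulation and the fact that $f$ has limited regularity and support, and is where the hypotheses $\sHi = C^{1,\alpha}([0,T],H^{3/2}(\pd\Omega))$ and $\Gi\cap\Go\neq\emptyset$ really get used.
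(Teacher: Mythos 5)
Your overall strategy coincides with the paper's: represent $\Lambda_j(f)$ through the spectral decomposition (Lemma \ref{lm-rndu}), test with separated data $f(\sigma,t)=g(\sigma)h(t)$ supported in $\Gi\times(0,T_0)$, reduce to an identity between two generalized Dirichlet series in the time variable, match them term by term using the non-vanishing of the kernels $\theta_{j,n}$ on $\Gi\times\Go$ (Lemma \ref{lm-nz}, which rests on \eqref{p3}), and absorb the remaining freedom into an orthogonal change of basis of each eigenspace. Your last step (rank-$m_n$ symmetric kernels with coinciding ranges, then a rotation of $\{\varphi_{2,n,i}\}$, then propagation from $\omega$ to $\Gi\cup\Go$ via linear independence of the restricted traces) is in substance the paper's fourth step with the point-evaluation matrices $P_{j,n}$. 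One small correction there: the orthogonality of the change-of-basis matrix $M_n$ is \emph{not} a consequence of unique continuation or ``analyticity''; it follows from plugging $\Psi_{1,n}=M_n\Psi_{2,n}$ into the kernel identity $\theta_{1,n}=\theta_{2,n}$, which encodes the Gram matrix of the two families, and then invoking the linear independence \eqref{p3}. Lemma \ref{lm-ucp} enters only through \eqref{p3}.

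The step you explicitly leave open --- passing from ``$\Lambda_1(f)=\Lambda_2(f)$ for all $f\in\sHi$'' to term-by-term agreement of the exponential sums --- is exactly where the paper does its real work, and it is a genuine gap in your write-up as it stands. The paper fills it in two moves. First, varying $h$ over $C_0(0,T_0-\eps)$ and using its density in $L^1(\eps,T_0)$ (legitimate because $s\mapsto\sum_n e^{-\lambda_{j,n}s}\langle g,\psi_{j,n}\rangle_{L^2(\Gi)}\psi_{j,n}$ is bounded on $(\eps,T_0)$ with values in $H^{1\slash 2}(\Go)$, by \eqref{i2c} and the Weyl bound \eqref{p2}) upgrades the integrated identity to a pointwise-in-$s$ identity on $(0,T_0)$, i.e.\ \eqref{a4c}. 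Second --- and this is the ingredient your sketch omits --- each side is a real-analytic $H^{1\slash 2}(\Go)$-valued function of $s$ on all of $(0,+\infty)$ (Lemma \ref{lm-an}), so the identity propagates from $(0,T_0)$ to $(0,+\infty)$, yielding \eqref{a8} and then \eqref{a9b}; only at that point does the classical uniqueness theorem for Dirichlet series $\sum_n a_n e^{-\lambda_n' s}$ with strictly increasing exponents and coefficients $\theta_{j,n}\not\equiv 0$ apply to give \eqref{a10}. The difficulties you anticipate (accumulation of eigenvalues, limited regularity of $f$) do not actually arise: Weyl's law forces $\lambda_{j,n}'\to+\infty$ after grouping by distinct eigenvalues, and the regularity of $f$ enters only through the convergence estimates above. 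With Lemmas \ref{lm-an} and \ref{lm-nz} inserted at this point, your plan becomes the paper's proof.
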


Prior to proving Theorem \ref{thm-eq}, we establish a representation formula of the normal derivative of the solution to \eqref{a1}.

\subsubsection{A representation formula of the DN map}

We start by expressing the solution to \eqref{a1} in terms of the BSD.

\begin{lemma}
\label{lm-ru}
For all $f \in \sH$, the solution $u$ to \eqref{a1}, given by Proposition \ref{pr-eu}, reads 
\bel{p3a}
u(\cdot,t) = \sum_{n \geq1} u_n(t) \varphi_n\ \mbox{in}\ L^2(\Omega),\ t \in [0,T],
\ee
where 
$$ u_n(t) = - \int_{0}^t  e^{-\lambda_n s} \left( \int_{\pd \Omega} \psi_n(\sigma) f(\sigma,t-s) d \sigma \right) ds,\ n \in \N. $$
\end{lemma}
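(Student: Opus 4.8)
The plan is to derive the Fourier expansion \eqref{p3a} by projecting \eqref{a1} onto each eigenfunction $\varphi_n$ and reducing it to a scalar linear ODE. First I would write $u(\cdot,t) = \sum_{n \geq 1} u_n(t) \varphi_n$ with $u_n(t) := \langle u(\cdot,t),\varphi_n \rangle_{L^2(\Omega)}$; this decomposition is legitimate because Proposition~\ref{pr-eu} gives $u(\cdot,t) \in L^2(\Omega)$ for each $t$, with $u(\cdot,t) \in H^2(\Omega)$ for $t > 0$ and $t \mapsto u(\cdot,t)$ continuous into $L^2(\Omega)$. The initial condition $u(\cdot,0)=0$ yields $u_n(0)=0$ for all $n$.

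Next I would compute $u_n'(t)$. Since $u \in C^1((0,T],H^2(\Omega))$, for $t > 0$ we have $u_n'(t) = \langle \pd_t u(\cdot,t),\varphi_n \rangle_{L^2(\Omega)} = \langle (\Delta - q) u(\cdot,t),\varphi_n \rangle_{L^2(\Omega)}$, using the PDE in \eqref{a1}. The key step is to integrate by parts twice (Green's formula) to move the operator onto $\varphi_n$: because $\varphi_n \in H_0^1(\Omega) \cap H^2(\Omega)$ vanishes on $\pd \Omega$, the boundary term involving $\pd_\nu u(\cdot,t)\,\overline{\varphi_n}$ drops out, leaving a single boundary term $-\int_{\pd\Omega} u(\sigma,t)\,\overline{\pd_\nu\varphi_n(\sigma)}\,d\sigma = -\int_{\pd\Omega} f(\sigma,t)\,\psi_n(\sigma)\,d\sigma$ (real eigenfunctions can be chosen, so conjugation is harmless), together with $-\langle q\,u(\cdot,t),\varphi_n\rangle + \langle u(\cdot,t), \Delta\varphi_n\rangle = \langle u(\cdot,t), (-\Delta+q)\varphi_n \rangle = \lambda_n u_n(t)$ — wait, with the right signs one gets
$$ u_n'(t) = -\lambda_n u_n(t) - \int_{\pd\Omega} \psi_n(\sigma) f(\sigma,t)\,d\sigma. $$
So each $u_n$ solves the scalar inhomogeneous linear ODE $u_n' + \lambda_n u_n = g_n(t)$ with $g_n(t) := -\int_{\pd\Omega}\psi_n(\sigma) f(\sigma,t)\,d\sigma$ and $u_n(0)=0$. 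Duhamel's formula then gives $u_n(t) = \int_0^t e^{-\lambda_n(t-s)} g_n(s)\,ds = -\int_0^t e^{-\lambda_n s}\big(\int_{\pd\Omega}\psi_n(\sigma)f(\sigma,t-s)\,d\sigma\big)\,ds$ after the change of variable $s \mapsto t-s$, which is exactly the claimed expression.

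The main obstacle is the rigorous justification of the integration by parts and of differentiating the Fourier coefficient under the pairing: one must check that $u(\cdot,t)$ has enough regularity up to the boundary for Green's identity to apply (this is Proposition~\ref{pr-eu}, giving $H^2(\Omega)$-regularity for $t>0$ and continuity of traces), and that $u_n$ is $C^1$ on $(0,T]$ and continuous at $t=0$ so that the initial condition transfers to $u_n(0)=0$ and the Duhamel representation (which is continuous up to $t=0$ since $f \in \sH \subset C^0$) indeed reproduces $u$. A minor point to handle carefully is the sign bookkeeping in the double integration by parts and the choice of real-valued $\varphi_n$ (or else carrying the complex conjugates through), but this is routine. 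I would close by noting that the series \eqref{p3a} converges in $L^2(\Omega)$ by Parseval since $(u_n(t))_n = (\langle u(\cdot,t),\varphi_n\rangle)_n \in \ell^2$.
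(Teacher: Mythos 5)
Your proof is correct and follows essentially the same route as the paper's: project onto the eigenbasis, use Green's formula (with the boundary term in $\partial_\nu u\,\overline{\varphi_n}$ vanishing because $\varphi_n \in H_0^1(\Omega)$) to obtain the scalar ODE $u_n' + \lambda_n u_n = -\int_{\partial\Omega}\psi_n f\,d\sigma$, and solve it by Duhamel/integrating factor with $u_n(0)=0$. Your remarks on the regularity needed to differentiate under the pairing and to pass to $t=0$ are if anything slightly more careful than the paper's own treatment.
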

\begin{proof}
Since $\{ \varphi_n, \ n \in \N \}$ is an orthonormal basis of $L^2(\Omega)$ and $u \in C^1([0,T],L^2(\Omega))$ 
by Proposition \ref{pr-eu}, we have 
$$
 \forall s \in [0,T],\ u(\cdot,s) = \sum_{n \in \N} u_n(s) \varphi_n\ \mbox{in}\ L^2(\Omega),
$$
with
$$ s \mapsto u_n(s) = \int_{\Omega} u(x,s) \varphi_n(x) dx \in C^1([0,T]. $$
Moreover, for all $s \in [0,T]$, we get that
\beas
u_n'(s) & = &  \int_{\Omega} \pd_t u(x,s) \varphi_n(x) dx \\
& = & -\int_{\Omega} (-\Delta + q(x))u(x,t) \varphi_n(x) dx \\
& = & - \int_{\Omega} u(x,s) A_q \varphi_n(x) dx  + \int_{\pd \Omega} \pd_\nu u(\sigma,s) \varphi_n(\sigma) d \sigma
-  \int_{\pd \Omega} u(\sigma,s)  \pd_\nu  \varphi_n(\sigma) d \sigma,
\eeas
upon applying the Green formula. As a consequence, we have
\beas
u_n'(s) & = & - \lambda_n  \int_{\Omega} u(x,s) \varphi_n(x) dx  -  \int_{\pd \Omega} f(\sigma,s)  \psi_n(\sigma) d \sigma \\
& = & - \lambda_n u_n(s) -  \int_{\pd \Omega} f(\sigma,s)  \psi_n(\sigma) d \sigma,
\eeas
and hence
$e^{-\lambda_n s} \frac{d}{ds} ( e^{\lambda_n s} u_n(s) )
= u_n'(s) + \lambda_n u_n(s) =  -\int_{\pd \Omega} f(\sigma,s)  \psi_n(\sigma) d \sigma$. Thus, taking into account that $u_n(0)=0$, we get for every $t \in [0,T]$, that
$$ e^{\lambda_n t} u_n(t) = - \int_0^t e^{\lambda_n s}  \left( \int_{\pd \Omega} f(\sigma,t)  \psi_n(\sigma) d \sigma \right) ds, $$
upon integrating over $[0,t]$. 
\end{proof}
In general the series in \eqref{p3a} converges in $L^2(\Omega)$ but the convergence can be upgraded to $H^2(\Omega)$ by assuming that the function $f \in \sH$ satisfies the following condition
\bel{c1} 
\exists \eps \in (0,T_0),\ \forall (\sigma,t) \in \pd \Omega \times [T_0-\eps,T_0],\ f(\sigma,t)=0,
\ee
for some fixed $T_0 \in (0,T)$.
Indeed, in this case we deduce from the identity $u(\cdot,T_0)=f(\cdot,T_0)=0$ on $\pd \Omega$ that $u(\cdot,T_0) \in H^2(\Omega) \cap H_0^1(\Omega)=\mbox{dom}(A_q)$. As a consequence we have
$\sum_{n=1}^{+\infty} \lambda_n \abs{u_n(T_0)}^2 < \infty$ and the series in \eqref{p3a} converges in $H^2(\Omega)$ for $t=T_0$:
$$ u(\cdot,T_0)= \sum_{n=1}^{+\infty} u_n(T_0) \varphi_n\ \mbox{in}\ H^2(\Omega). $$
Thus, by continuity of the trace operator $g \mapsto (\pd_\nu g)_{| \pd \Omega}$ from $H^2(\Omega)$ to $H^{1 \slash 2}(\pd \Omega)$, we obtain that
\bea
\pd_\nu u(\cdot,T_0) & = & \sum_{n=1}^{+\infty} u_n(T_0) \psi_n \nonumber \\
& = & - \sum_{n=1}^{+\infty} \left( \int_{\eps}^{T_0} e^{-\lambda_n s} \left( \int_{\pd \Omega} f(\sigma,T_0-s) \psi_n(\sigma) d \sigma \right)  ds \right)\psi_n\ \mbox{in}\ H^{1 \slash 2}(\pd \Omega). \label{a2}
\eea

Having proved \eqref{a2}, we may now establish the:

\begin{lemma}
\label{lm-rndu}
For all $f \in \sH$ fulfilling \eqref{c1}, it holds true that
$$ \Lambda_q f = -\int_{\eps}^{T_0} \sum_{n=1}^{+\infty}  e^{-\lambda_n s} \left( \int_{\pd \Omega} f(\sigma,T_0-s) \psi_n(\sigma) d \sigma \right)  \psi_n ds\ \mbox{in}\ H^{1 \slash 2}(\pd \Omega).$$
\end{lemma}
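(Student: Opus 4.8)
The plan is to extract the claimed formula for $\Lambda_q f$ directly from the already-established expansion \eqref{a2} of $\pd_\nu u(\cdot,T_0)$, by observing that under the hypothesis \eqref{c1} the lower limit of integration in \eqref{a2} can be taken to be $\eps$ rather than $0$. Indeed, since $f(\sigma,t)=0$ for all $(\sigma,t)\in\pd\Omega\times[T_0-\eps,T_0]$, the substitution $t\mapsto T_0-s$ shows that $f(\sigma,T_0-s)=0$ for $s\in[0,\eps]$, so that
$$
u_n(T_0) = -\int_0^{T_0} e^{-\lambda_n s}\left(\int_{\pd\Omega} f(\sigma,T_0-s)\psi_n(\sigma)\,d\sigma\right)ds = -\int_{\eps}^{T_0} e^{-\lambda_n s}\left(\int_{\pd\Omega} f(\sigma,T_0-s)\psi_n(\sigma)\,d\sigma\right)ds,\ n\in\N,
$$
by Lemma \ref{lm-ru}. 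This already gives \eqref{a2}, and since $\Lambda_q f$ is by definition the restriction to $\Go$ (or, in the present intermediate statement, the full boundary value) of $\pd_\nu u(\cdot,T_0)$, it remains only to justify exchanging the order of the $s$-integration and the $n$-summation.

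First I would record the quantitative bound that powers the interchange: by Cauchy-Schwarz on $\pd\Omega$ and the trace estimate \eqref{i2c}, one has $\abs{\int_{\pd\Omega} f(\sigma,T_0-s)\psi_n(\sigma)\,d\sigma} \leq \norm{f(\cdot,T_0-s)}_{L^2(\pd\Omega)}\norm{\psi_n}_{L^2(\pd\Omega)} \leq c\,\lambda_n\norm{f}_{C^0([0,T],H^{3/2}(\pd\Omega))}$ for $n\geq n_M$, and similarly $\norm{\psi_n}_{H^{1/2}(\pd\Omega)}\leq c\lambda_n$. Hence the general term of the series of $H^{1/2}(\pd\Omega)$-norms is dominated, for $s\geq\eps$, by a constant times $\lambda_n^2 e^{-\eps\lambda_n}$, whose sum over $n$ is finite by Weyl's law \eqref{p2}. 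Consequently the series $\sum_n e^{-\lambda_n s}\big(\int_{\pd\Omega} f(\sigma,T_0-s)\psi_n(\sigma)\,d\sigma\big)\psi_n$ converges in $H^{1/2}(\pd\Omega)$ uniformly in $s\in[\eps,T_0]$, and the partial-sum functions are continuous in $s$. Therefore term-by-term integration over $[\eps,T_0]$ is legitimate (e.g.\ by the dominated convergence theorem applied to the Bochner integral, or simply by uniform convergence of a continuous-valued integrand), and exchanging $\sum_n$ and $\int_\eps^{T_0}$ in \eqref{a2} yields exactly
$$
\Lambda_q f = \pd_\nu u(\cdot,T_0) = -\int_{\eps}^{T_0}\sum_{n=1}^{+\infty} e^{-\lambda_n s}\left(\int_{\pd\Omega} f(\sigma,T_0-s)\psi_n(\sigma)\,d\sigma\right)\psi_n\,ds\ \mbox{in}\ H^{1/2}(\pd\Omega),
$$
as claimed.

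The only genuinely delicate point is the uniform-in-$s$ summability of the Neumann-data series in the $H^{1/2}(\pd\Omega)$ topology: one must be careful that the exponential weight $e^{-\lambda_n s}$ is controlled from below away from $s=0$, which is precisely where the cutoff condition \eqref{c1} (forcing $s\geq\eps>0$) is indispensable — without it the factor $\lambda_n^2 e^{-\lambda_n s}$ is not summable uniformly down to $s=0$, and indeed the series for $\pd_\nu u(\cdot,t)$ need not converge in $H^{1/2}(\pd\Omega)$ for small $t$. Once $s\geq\eps$ is in force, Weyl's law \eqref{p2} with $k=2$ closes the estimate and everything else is routine.
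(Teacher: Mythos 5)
Your proposal is correct and follows essentially the same route as the paper: both reduce the claim to interchanging the sum over $n$ with the integral over $[\eps,T_0]$ in \eqref{a2}, and both justify this via the Cauchy--Schwarz bound on $\int_{\pd\Omega} f(\sigma,T_0-s)\psi_n(\sigma)\,d\sigma$, the trace estimate \eqref{i2c}, and the summability of $\lambda_n^2 e^{-\lambda_n\eps}$ from Weyl's law \eqref{p2}. The only cosmetic difference is that you dominate by the uniform constant $\norm{f}_{C^0([0,T],H^{3/2}(\pd\Omega))}$ to get uniform-in-$s$ convergence, whereas the paper dominates by the $L^1(\eps,T_0)$ function $s\mapsto\norm{f(\cdot,T_0-s)}_{H^{3/2}(\pd\Omega)}$ and invokes dominated convergence; both are valid since $f\in\sH$.
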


\begin{proof}
In light of \eqref{a2}, we may write that 
\bel{a3}
u_n(T_0)= - \int_{\eps}^{T_0} e^{-\lambda_n s} \gamma_n(s) ds\ \mbox{with}\ \gamma_n(s) := \int_{\pd \Omega} f(\sigma,T_0-s) \psi_n(\sigma) d \sigma,\ s \in [\eps,T_0].
\ee
As $\abs{\gamma_n(s)} \leq  \norm{f(\cdot,T_0-s)}_{L^2(\pd \Omega)} \norm{\psi_n}_{L^2(\pd \Omega)}$, we get from \eqref{i2c} that
$$
\sum_{n=n_M}^{+\infty} e^{-\lambda_n s} \abs{\gamma_n(s)} \norm{\psi_n}_{H^{1 \slash 2}(\pd \Omega)} \leq c^2  \left( \sum_{n=1}^{+\infty} \lambda_n^2 e^{-\lambda_n \eps} \right) \norm{f(\cdot,T_0-s)}_{H^{3 \slash 2}(\pd \Omega)},\ s \in [\eps,T_0]. $$
Since $\sum_{n=1}^{+\infty} \lambda_n^2 e^{-\lambda_n \eps} <+\infty$ by \eqref{p2}, and $s \mapsto  \norm{f(\cdot,T_0-s)}_{H^{3 \slash 2}(\pd \Omega)} \in L^1(\eps,T_0)$, the Lebesgue dominated convergence theorem then yields
$$ \sum_{n=1}^{+\infty} \left( \int_{\eps}^{T_0}  e^{-\lambda_n s} \gamma_n(s) ds \right) \psi_n = \int_{\eps}^{T_0} \sum_{n=1}^{+\infty}  e^{-\lambda_n s} \gamma_n(s) \psi_n ds, $$
where the convergence of the series is considered in $H^{1 \slash 2}(\pd \Omega)$. 
\end{proof}

Armed with Lemma \ref{lm-rndu}, we turn now to proving Theorem \ref{thm-eq}.

\subsubsection{Proof of Theorem \ref{thm-eq}}
We split the proof into four steps. \\
\paragraph{\it First step: Set up.}
For $f \in \sHi$ obeying the condition \eqref{c1}, we apply Lemma \ref{lm-rndu} and get
$$ \Lambda_j(f) = -\int_{\eps}^{T_0} \sum_{n=1}^{+\infty}  e^{-\lambda_{j,n} s} \left( \int_{\Gi}   \psi_{j,n}(\sigma) f(\sigma,T_0-s) d \sigma \right)\psi_{j,n} ds\ \mbox{in}\ H^{1 \slash 2}(\Go),\ j=1,2. $$
This and the assumption $\Lambda_1(f)=\Lambda_2(f)$ for all $f \in \sHi$, yield for a.e. $\sigma' \in \Go$ that
\bel{a4}
\int_{\eps}^{T_0} \sum_{n=1}^{+\infty}   \left( \int_{\Gi} \left( e^{-\lambda_{1,n} s} \psi_{1,n}(\sigma) \psi_{1,n}(\sigma') - e^{-\lambda_{2,n} s} \psi_{2,n}(\sigma) \psi_{2,n}(\sigma') \right) f(\sigma,T_0-s) d \sigma \right) ds=0.
\ee
Let $g \in H^{3 \slash 2}(\pd \Omega)$ be such that $\supp\ g \subset \Gi$, fix $\eps \in (0,T_0)$ and pick $h \in C_0(0,T_0-\eps)$, the set of compactly supported continuous functions in $(0,T_0-\eps)$. 
Then, the function
$$ f(\sigma,t) :=\left\{ \begin{array}{cl} g(\sigma) h(t) & \mbox{if}\ (\sigma,t) \in \pd \Omega \times (0,T_0-\eps) \\ 0 & \mbox{if}\ (\sigma,t) \in \pd \Omega \times [T_0-\eps,T_0), \end{array} \right. $$
lies in $\sHi$ and verifies \eqref{c1}.
Thus, by applying \eqref{a4} with $f$ expressed in this form, we find for a.e. $\sigma' \in \Go$ that
$$
\int_{\eps}^{T_0} \sum_{n=1}^{+\infty}   \left( \int_{\Gi} \left( e^{-\lambda_{1,n} s} \psi_{1,n}(\sigma) \psi_{1,n}(\sigma') - e^{-\lambda_{2,n} s} \psi_{2,n}(\sigma) \psi_{2,n}(\sigma') \right) g(\sigma) d \sigma \right) h(T_0-s) ds=0.
$$
Since $h$ is arbitrary in $C_0(0,T_0-\eps)$, then $s \mapsto h(T_0-s)$ is arbitrary in $C_0(\eps,T_0)$, so the above identity yields
\bel{a4b}
\sum_{n=1}^{+\infty} \int_{\Gi} \left( e^{-\lambda_{1,n} s} \psi_{1,n}(\sigma) \psi_{1,n}(\sigma') - e^{-\lambda_{2,n} s} \psi_{2,n}(\sigma)
\psi_{2,n}(\sigma') \right) g(\sigma) d \sigma =0,\ s \in (\eps,T_0),\ \sigma' \in \Go,
\ee
by density of $C_0(\eps,T_0)$ in $L^1(\eps,T_0)$. This follows from the fact that
$$s \mapsto \sum_{n=1}^{+\infty} e^{-\lambda_{j,n} s} \left( \int_{\Gi}  \psi_{j,n}(\sigma) g(\sigma) d \sigma \right)\psi_{j,n} \in L^{\infty}((\eps,T_0), H^{1 \slash 2}(\Go)), $$
arising from the estimate 
 \beas
\sum_{n=n_M}^{+\infty} e^{-\lambda_{j,n} s} \left| \int_{\Gi}  \psi_{j,n}(\sigma) g(\sigma) d \sigma \right| \| \psi_{j,n} \|_{H^{1 \slash 2}(\Go)}
& \leq & \| g \|_{L^2(\Gi)}  \sum_{n=n_M}^{+\infty} e^{-\lambda_{j,n} \eps} \| \psi_{j,n} \|_{H^{1 \slash 2}(\pd \Omega)}^2 \\
& \leq & c^2 \| g \|_{H^{3 \slash 2}(\pd \Omega)} \left( \sum_{n=n_M}^{+\infty} \lambda_{j,n}^2 e^{-\lambda_{j,n} \eps} \right)< \infty,
\eeas
which is derived for all $s \in (\eps, T_0)$ from \eqref{i2c} and \eqref{p2}. Now, since $\eps$ is arbitrary in $(0,T_0)$, we infer from \eqref{a4b} that
\bel{a4c}
\sum_{n=1}^{+\infty} \int_{\Gi} \left( e^{-\lambda_{1,n} s} \psi_{1,n}(\sigma) \psi_{1,n}(\sigma') - e^{-\lambda_{2,n} s} \psi_{2,n}(\sigma)
\psi_{2,n}(\sigma') \right) g(\sigma) d \sigma =0,\ \sigma' \in \Go,\ s \in (0,T_0).
\ee
\\
\paragraph{\it Second step: Analytic continuation.} 
For $j=1,2$, we set
\bel{a5}
F_j(\sigma',s) = \sum_{n=1}^{+\infty} e^{-\lambda_{j,n} s} \left( \int_{\Gi} \psi_{j,n}(\sigma) g(\sigma) d \sigma \right) \psi_{j,n}(\sigma'),\ (\sigma',s) \in \Go \times (0,+\infty),
\ee
and we establish the:
\begin{lemma}
\label{lm-an}
The $H^{1 \slash 2}(\Go)$-function $s \mapsto F_j(\cdot,s)$, $j=1,2$, is analytic in $(0,+\infty)$.
\end{lemma}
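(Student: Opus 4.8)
The plan is to view $s \mapsto F_j(\cdot,s)$ as a Dirichlet series with coefficients in the Banach space $H^{1 \slash 2}(\Go)$, and to prove that it converges absolutely and locally uniformly on the complex half-plane $\{ \re s > 0 \}$; analyticity on $(0,+\infty)$ then follows since a locally uniform limit of holomorphic Banach-space-valued functions is holomorphic (Morera / Cauchy integral formula in the vector-valued setting), and a holomorphic function restricted to the real axis is real-analytic. Accordingly, I would write the general term of \eqref{a5} as $a_n(s) := e^{-\lambda_{j,n} s}\, b_{j,n}$, where $b_{j,n} := \left( \int_{\Gi} \psi_{j,n}(\sigma) g(\sigma)\, d\sigma \right) \psi_{j,n} \in H^{1 \slash 2}(\Go)$ does not depend on $s$, and observe that each $a_n : \C \to H^{1 \slash 2}(\Go)$ is entire.

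First I would estimate the coefficients. By the Cauchy--Schwarz inequality and the boundedness of the restriction map $H^{1 \slash 2}(\pd \Omega) \to H^{1 \slash 2}(\Go)$,
$$ \norm{b_{j,n}}_{H^{1 \slash 2}(\Go)} \leq \abs{\int_{\Gi} \psi_{j,n} g\, d\sigma}\, \norm{\psi_{j,n}}_{H^{1 \slash 2}(\pd \Omega)} \leq \norm{g}_{L^2(\Gi)}\, \norm{\psi_{j,n}}_{L^2(\pd \Omega)}\, \norm{\psi_{j,n}}_{H^{1 \slash 2}(\pd \Omega)}. $$
Using $\norm{\psi_{j,n}}_{L^2(\pd \Omega)} \leq \norm{\psi_{j,n}}_{H^{1 \slash 2}(\pd \Omega)}$ together with \eqref{i2c}, this gives a constant $c$, depending only on $\Omega$ and $M$, such that $\norm{b_{j,n}}_{H^{1 \slash 2}(\Go)} \leq c\, \lambda_{j,n}^2\, \norm{g}_{L^2(\Gi)}$ for all $n \geq n_M$.

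Next, fix $\delta > 0$ and restrict attention to $s \in \C$ with $\re s \geq \delta$. Since $\lambda_{j,n} \geq -M$, only finitely many eigenvalues are negative, so for $n$ large one has $\abs{e^{-\lambda_{j,n} s}} = e^{-\lambda_{j,n} \re s} \leq e^{-\lambda_{j,n} \delta}$, whence $\norm{a_n(s)}_{H^{1 \slash 2}(\Go)} \leq c\, \lambda_{j,n}^2\, e^{-\lambda_{j,n} \delta}\, \norm{g}_{L^2(\Gi)}$ for $n \geq n_M$. By \eqref{p2} applied with $k=2$ and $\eps = \delta$, the series $\sum_{n \geq n_M} \lambda_{j,n}^2 e^{-\lambda_{j,n} \delta}$ is finite, so $\sum_n a_n(s)$ converges absolutely in $H^{1 \slash 2}(\Go)$, uniformly for $s$ in $\{ \re s \geq \delta \}$. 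As each $a_n$ is holomorphic there and $\delta > 0$ is arbitrary, $F_j(\cdot,s)$ is holomorphic on $\{ \re s > 0 \}$, hence real-analytic on $(0,+\infty)$; in particular one may differentiate term by term, $\frac{d^k}{ds^k} F_j(\cdot,s) = \sum_{n=1}^{+\infty} (-\lambda_{j,n})^k e^{-\lambda_{j,n} s} b_{j,n}$, the series again converging in $H^{1 \slash 2}(\Go)$ by the same reasoning.

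There is no genuine obstacle: the only delicate point is that the a priori polynomial growth $\norm{\psi_{j,n}}_{H^{1 \slash 2}(\pd \Omega)} \leq c\lambda_{j,n}$ is overwhelmed by the exponential factor $e^{-\lambda_{j,n}\delta}$ as soon as $\re s \geq \delta > 0$, which is exactly what \eqref{p2} (a consequence of Weyl's law) provides; one must also keep in mind that \eqref{i2c} is valid only for $n \geq n_M$, the finitely many remaining terms being harmless entire functions.
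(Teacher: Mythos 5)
Your proof is correct and follows essentially the same route as the paper: bound the $n$-th term of the series by $c\,\lambda_{j,n}^2\, e^{-\lambda_{j,n}\eps}$ using \eqref{i2c}, and sum via \eqref{p2}. The one point where you are actually more careful than the paper is the last step: the paper infers analyticity from uniform convergence on real compacts of $(0,+\infty)$, which by itself is not sufficient in the real-analytic category, whereas your extension of the estimate to the complex half-plane $\Pre{s}>\delta$ and appeal to the vector-valued Weierstrass theorem (locally uniform limits of holomorphic functions are holomorphic) closes that small gap.
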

\begin{proof}
For $K$, a compact subset of $(0,+\infty)$, we set $\eps:=\inf \{s,\ s \in K\}>0$. Then, with reference to \eqref{i2c}, we have 
\beas
e^{-\lambda_{j,n} s}  \abs{\int_{\Gi}  \psi_{j,n}(\sigma) g(\sigma) d \sigma} \norm{\psi_{j,n}}_{H^{1 \slash 2}(\pd \Omega)}
& \leq & e^{-\lambda_{j,n} \eps} \norm{\psi_{j,n}}_{L^2(\Gi)}  \norm{g}_{L^2(\Gi)} \norm{\psi_{j,n}}_{H^{1 \slash 2}(\pd \Omega)} \\
& \leq &
e^{-\lambda_{j,n} \eps} \norm{\psi_{j,n}}_{H^{1 \slash 2}(\pd \Omega)}^2 \norm{g}_{H^{3 \slash 2}(\pd \Omega)} \\
& \leq & c^2 \norm{g}_{\sH} e^{-\lambda_{j,n} \eps}  \lambda_{j,n}^2,\ s \in K,\ n \in \N,
\eeas
for $j=1,2$, where $c$ is the constant appearing in \eqref{p2}. 
Therefore, the series
$$\sum_{n=1}^{+\infty} e^{-\lambda_{j,n} s}  \left( \int_{\Gi}  \psi_{j,n}(\sigma) g(\sigma) d \sigma  \right) \psi_{j,n} $$ 
converges in $H^{1 \slash 2}(\pd \Omega)$, uniformly for $s \in K$. As a consequence, the mapping $s \mapsto F_j(\cdot,s)$ is analytic in $K$, since this is obviously the case for each function $s \mapsto e^{-\lambda_{j,n} s} ( \int_{\Gi}  \psi_{j,n}(\sigma) g(\sigma) d \sigma) \psi_{j,n}$ with $n \in \N$. Finally, $K$ being arbitrary in $(0,+\infty)$, we end up getting from this that $F_j$ is analytic in $(0,+\infty)$.
\end{proof}

Now, putting \eqref{a4c}-\eqref{a5} together, we infer from Lemma \ref{lm-an} that:
\bel{a6}
F_1(\cdot,s)=F_2(\cdot,s)\ \mbox{in}\ H^{1 \slash 2}(\Go),\ s \in (0,+\infty).
\ee
Next, for each $s \in (0,+\infty)$ fixed, we have
$$\sum_{n=n_M}^{+\infty} \lambda_{j,n} e^{-\lambda_{j,n} s} \norm{\psi_{j,n}}_{L^2(\Gi)} \leq c \sum_{n=n_M}^{+\infty} \lambda_{j,n}^2 e^{-\lambda_{j,n} s}  < \infty, $$ 
by \eqref{i2c} and \eqref{p2}, whence $\sigma \mapsto \sum_{n=1}^{+\infty} e^{-\lambda_{j,n} s} \abs{\psi_{j,n}(\sigma)} \norm{\psi_{j,n}}_{H^{1 \slash 2}(\Go)} \in L^2(\Gi)$. Therefore, it holds true that
$\sigma \mapsto \sum_{n=1}^{+\infty} e^{-\lambda_{j,n} s} \abs{\psi_{j,n}(\sigma) g(\sigma) } \norm{\psi_{j,n}}_{H^{1 \slash 2}(\Go)} \in L^1(\Gi)$ for all $s \in (0,+\infty)$.
Thus, by applying the Lebesgue dominated convergence theorem, we get that for every $s \in (0,+\infty)$,
$$ F_j(\cdot,s)=\sum_{n=1}^{+\infty} e^{-\lambda_{j,n} s} \left( \int_{\Gi}  \psi_{j,n}(\sigma) g(\sigma) d \sigma \right) \psi_{j,n} =  \int_{\Gi} \left( \sum_{n=1}^{+\infty}  e^{-\lambda_{j,n} s} \psi_{j,n}(\sigma)  \psi_{j,n}  \right) g(\sigma) d \sigma, $$
the convergence of the series being taken in the sense of $H^{1 \slash 2}(\Go)$. This and \eqref{a6} yield for a.e. $\sigma' \in \Go$ and all $s \in (0,+\infty)$, that
\bel{a7}
\int_{\Gi} \left( \sum_{n=1}^{+\infty}  e^{-\lambda_{1,n} s} \psi_{1,n}(\sigma) \psi_{1,n}(\sigma')   \right) g(\sigma)  d \sigma 
= \int_{\Gi} \left( \sum_{n=1}^{+\infty}  e^{-\lambda_{2,n} s} \psi_{2,n}(\sigma) \psi_{2,n} (\sigma')  \right) g(\sigma) d \sigma.
\ee
Moreover, since $\sigma \mapsto \sum_{n=1}^{+\infty}  e^{-\lambda_{j,n} s} \psi_{j,n}(\sigma) \psi_{j,n}  \in L^2(\Gi,H^{1 \slash 2}(\Go))$, $j=1,2$, and since $g$ is arbitrary in $H^{3 \slash 2}(\Gi)$, we deduce from \eqref{a7} and the density of $H^{3 \slash 2}(\Gi)$ in 
$L^2(\Gi)$, that for all $s \in (0,+\infty)$, the identity
\bel{a8}
\sum_{n=1}^{+\infty}  e^{-\lambda_{1,n} s} \psi_{1,n}(\sigma) \psi_{1,n}(\sigma') =  \sum_{n=1}^{+\infty}  e^{-\lambda_{2,n} s} \psi_{2,n}(\sigma) \psi_{2,n} (\sigma'),
\ee
holds in $L^2(\Gi,H^{1 \slash 2}(\Go))$, and consequently in $L^2(\Gi \times \Go)$.\\

\paragraph{\it Third step: Generalized Dirichlet series.}
Let $\{ \lambda_{j,n}',\ n \in \N \}$ be the sequence of strictly increasing eigenvalues of $A_j=A_{q_j}$, $j=1,2$. For each $n \in \N$, we denote by $m_{j,n}$ the geometric multiplicity\footnote{That is to say that $m_{j,n}$ is the dimension of the linear subspace $\ker(A_j - \lambda_{j,n}')$ in $L^2(\Omega)$.} of the eigenvalue $\lambda_{j,n}'$ and we introduce a family $\{ \varphi_{j,n,i},\ i=1,\ldots,m_{j,n} \}$ of eigenfunctions of $A_j$, which satisfy
$$ A_j \varphi_{j,n,i} = \lambda_{j,n}' \varphi_{j,n,i},\ i=1,\ldots,m_{j,n}, $$
and form a $L^2(\Omega)$-orthonormal basis of the eigenspace $\ker (A_j-\lambda_{j,n}')$. Next, we put 
\bel{a9} 
\theta_{j,n}(\sigma,\sigma') := \sum_{i=1}^{m_{j,n}} \psi_{j,n,i}(\sigma) \psi_{j,n,i}(\sigma'),\ (\sigma,\sigma') \in \Gi \times \Go,
\ee
where $\psi_{j,n,i}:=\pd_{\nu} \varphi_{j,n,i}$. For every fixed $s \in (0,+\infty)$, it is clear from \eqref{i2c} and \eqref{p2} that both series appearing in \eqref{a8} are absolutely convergent in $L^2(\Gi \times \Go)$, so we 
infer from \eqref{a9} that 
\bel{a9b}
\sum_{n=1}^{+\infty}  e^{-\lambda_{1,n}' s} \theta_{1,n}(\sigma,\sigma') =  \sum_{n=1}^{+\infty}  e^{-\lambda_{2,n}' s} \theta_{2,n}(\sigma,\sigma'),\ s \in (0,+\infty),\ (\sigma,\sigma') \in \Gi \times \Go.
\ee
Moreover, each function $\theta_{j,n}$, for $j=1,2$ and $n \in \N$, being not identically zero in $\Gi \times \Go$ according to Lemma \ref{lm-nz}, it follows from \eqref{a9b} and the standard theory of Dirichlet series, that
\bel{a10}
\lambda_{1,n}' = \lambda_{2,n}'\ \mbox{and}\ \theta_{1,n}=\theta_{2,n}\ \mbox{on}\ \Gi \times \Go,\ n \in \N.
\ee
\\
\paragraph{\it Fourth step: End of the proof.}
We are left with the task of showing that $m_{1,n}=m_{2,n}$ for all $n \in \N$, and that the eigenfunctions $\varphi_{2,n}$ can be chosen in such a way that
$$\psi_{1,n,i}=\psi_{2,n,i}\ \mbox{on}\ \Gi \cup \Go,\ i =1,\ldots,m_n, $$ 
where we have set $m_{j,n}:= m_{1,n}=m_{2,n}$.
Prior to doing so, we recall that for all non empty open subset $\Gamma \subset \pd \Omega$, the dimension of the subspace spanned by $\{ (\psi_{j,n,i})_{| \Gamma},\ i=1,\ldots,m_{j,n} \}$ in $L^2(\Gamma)$, is equal to $m_{j,n}$, i.e. that 
$$ m_{j,n}=\dim \{ \psi_{j,n,i},\ i=1,\ldots,m_{j,n} \},\ j=1,2. $$ 

a) We start by establishing that $m_{1,n}=m_{2,n}=m_n$ and that there exists $M_n \in \cO_{m_n}(\R)$, the set of  orthogonal matrices of size $m_n$, such that we have
\bel{a11}
\Psi_{2,n} = M_n \Psi_{1,n},
\ee
with $\Psi_{j,n}:=(\psi_{j,n,1},\ldots,\psi_{j,n,m_{j,n}})^T$, $j=1,2$,

To this end, we notice that the set $\Gamma_{n,1}:=\{ \sigma \in \Gi \cap \Go,\ \psi_{1,n,1}(\sigma) \neq 0 \}$ has positive Lebesgue measure, since $\psi_{1,n,1}$ is not identically zero in $\Gi \cap \Go$.
Similarly, the functions $\psi_{1,n,1}$ \and $\psi_{1,n,2}$ being linearly independent in $L^2(\Gi \cap \Go)$, the Lebesgue measure of the set 
$$ \Gamma_{n,2}:=\left\{ (\sigma_1,\sigma_2) \in (\Gi \cap \Go)^2,\ \det \left( \begin{array}{cc} \psi_{1,n,1}(\sigma_1) & \psi_{1,n,2}(\sigma_1) \\ \psi_{1,n,1}(\sigma_2) & \psi_{1,n,2}(\sigma_2) \end{array} \right) \neq 0 \right\} $$
is positive\footnote{Otherwise, we would have $\psi_{1,n,2}(\sigma_1) \psi_{1,n,1} - \psi_{1,n,1}(\sigma_1) \psi_{1,n,2}=0$ in $L^2(\Gi \cap \Go)$ for a.e. $\sigma_1 \in \Gi \cap \Go$, and hence $\psi_{1,n,1}(\sigma_1)=0$ since $\psi_{1,n,1}$ and $\psi_{1,n,2}$ are linearly independent in $L^2(\Gamma)$, which is a contradiction with the fact that $\Gamma_{n,1}$ has non-zero Lebesgue measure.}.
Thus, by induction on $i$, we can build a subset $\Gamma_{n,m_{1,n}} \subset (\Gi \cap \Go)^{m_{1,n}}$, with positive Lebesgue measure, such that following matrix
$$ P_{1,n}(\boldsymbol{\sigma}) := \left( \begin{array}{ccc} \psi_{1,n,1}(\sigma_1) & \ldots & \psi_{1,n,m_{1,n}}(\sigma_1) \\ \vdots & & \vdots \\ \psi_{1,n,1}(\sigma_{m_{1,n}}) & \ldots & \psi_{1,n,m_{1,n}}(\sigma_{m_{1,n}}) \end{array} \right) $$
is invertible for a.e. $\boldsymbol{\sigma}:=(\sigma_1,\ldots,\sigma_{m_{1,n}}) \in \Gamma_{n,m_{1,n}}$. 

Next, with reference to \eqref{a10}, we get upon applying \eqref{a9} with $\sigma'=\sigma_j$ for $j=1,\ldots,m_{1,n}$, that
$$
\sum_{i=1}^{m_{1,n}} \psi_{1,n,i}(\sigma_j) \psi_{1,n,i}(\sigma) = \sum_{i=1}^{m_{2,n}} \psi_{2,n,i}(\sigma_j) \psi_{2,n,i}(\sigma),\ \sigma
\in \Gi,\ \boldsymbol{\sigma}=(\sigma_1,\ldots,\sigma_{m_{1,n}}) \in \Gamma_{n,m_{1,n}}. $$
This can be equivalently rewritten as
$P_{1,n}(\boldsymbol{\sigma}) \Psi_{1,n}(\sigma) = P_{2,n}(\boldsymbol{\sigma}) \Psi_{2,n}(\sigma)$ for a.e. $\sigma \in \Gi$,  where $P_{2,n}(\boldsymbol{\sigma})$ is the following $m_{1,n} \times m_{2,n}$ matrix:
$$ P_{2,n}(\boldsymbol{\sigma}) := \left( \begin{array}{ccc} \psi_{2,n,1}(\sigma_1) & \ldots & \psi_{2,n,m_{2,n}}(\sigma_1) \\ \vdots & & \vdots \\ \psi_{2,n,1}(\sigma_{m_{1,n}}) & \ldots & \psi_{2,n,m_{2,n}}(\sigma_{m_{1,n}}) \end{array} \right).  $$
Therefore, putting $M_n(\boldsymbol{\sigma}):=P_{1,n}(\boldsymbol{\sigma})^{-1} P_{2,n}(\boldsymbol{\sigma})$ for a.e. $\boldsymbol{\sigma} \in \Gamma_{n,m_{1,n}}$, we get that $\Psi_{1,n}(\sigma) = M_n(\boldsymbol{\sigma}) \Psi_{2,n}(\sigma)$ for a.e. $\sigma \in \Gi$.
Further, taking $\sigma=\sigma_j$ in \eqref{a10}, we get in the same way as before that $\Psi_{1,n}(\sigma') = M_n(\boldsymbol{\sigma}) \Psi_{2,n}(\sigma')$ for a.e. $\sigma' \in \Go$. As a consequence, we have 
\bel{a12}
\Psi_{1,n}(\sigma) = M_n(\boldsymbol{\sigma}) \Psi_{2,n}(\sigma),\ \sigma \in \Gi \cup \Go,\ \boldsymbol{\sigma} \in \Gamma_{n,m_{1,n}}. \ee
Since $\dim \{ \psi_{j,n,i},\ i=1,\ldots,m_{j,n} \} = m_{j,n}$ in $L^2(\Gi \cup \Go)$, $j=1,2$, we infer from \eqref{a12} that $m_{1,n} \leq m_{2,n}$. Moreover, as $j=1$ and $j=2$ play symmetric roles here, we have $m_{2,n} \leq m_{1,n}$, so we end up getting that $m_{1,n}=m_{2,n}$.

It remains to show that $M_n(\boldsymbol{\sigma}) \in \cO_{m_n}(\R)$ for a.e. $\boldsymbol{\sigma} \in \Gamma_{n,m_n}$.
This can be done by plugging each of the two following equalities $\Psi_{1,n}(\sigma) = M_n(\boldsymbol{\sigma}) \Psi_{2,n}(\sigma)$ for a.e. $\sigma \in \Gi$ and $\Psi_{1,n}(\sigma') = M_n(\boldsymbol{\sigma})  \Psi_{2,n}(\sigma')$ for a.e. $\sigma' \in \Go$, in \eqref{a9b}. We obtain that $M_n(\boldsymbol{\sigma})  \psi_{2,n}(\sigma) \cdot M_n(\boldsymbol{\sigma})  \psi_{2,n}(\sigma') = \psi_{2,n}(\sigma) \cdot \psi_{2,n}(\sigma')$, where the symbol $\cdot$ stands for the Euclidian scalar product in $\R^{m_n}$. Therefore, we have $(M_n(\boldsymbol{\sigma})^T M_n(\boldsymbol{\sigma})  - I_{m_n})  \psi_{2,n}(\sigma) \cdot \psi_{2,n}(\sigma') = 0$ for a.e. $(\sigma,\sigma') \in \Gi \times \Go$, where $I_{m_n}$ denotes the identity matrix of size $m_n$. The family $\{ \psi_{2,n,i},\ i=1,\ldots,m_n \}$, being linearly independent in $L^2(\Go)$, this entails that $(M_n(\boldsymbol{\sigma})^T M_n(\boldsymbol{\sigma})  - I_{m_n})  \psi_{2,n}(\sigma)=0$ for a.e. $\sigma \in \Gi$. Similarly, using that $\{ \psi_{2,n,i},\ i=1,\ldots,m_n \}$ is linearly independent in $L^2(\Gi)$, we get that $M_n(\boldsymbol{\sigma})^T M_n(\boldsymbol{\sigma})  - I_{m_n}=0$, which establishes that $M_n(\boldsymbol{\sigma}) \in \cO_{m_n}(\R)$.\\

b) We turn now to showing that $\psi_{1,n,i}=\psi_{2,n,i}$ on $\pd \Omega$ for all $i \in \{ 1, \ldots, m_n \}$, up to some appropriate choice of the eigenfunctions $\varphi_{2,n,i}$. To do that, we write $\varphi_{2,n}=(\varphi_{2,n,1},\ldots,\varphi_{2,n,m_n})^T$ and we consider
$$ \varphi_{2,n}' =(\varphi_{2,n,1}',\ldots,\varphi_{2,n,m_n}')^T := M_n(\boldsymbol{\sigma})^T \varphi_{2,n}, $$
where $\boldsymbol{\sigma}$ is arbitrary in $\Gamma_{n,m_n}$. 

Writing $M_n$ instead of $M_n(\boldsymbol{\sigma})$ in the sequel, we have for all $(i,k) \in \{ 1, \ldots , m_n \}^2$ that
$$ \langle \varphi_{2,n,i}' , \varphi_{2,n,k}' \rangle_{L^2(\Omega)} = \sum_{r,s=1}^{m_n} (M_n^T)_{ir} (M_n^T)_{ks} \langle \varphi_{2,n,r} , \varphi_{2,n,s}  \rangle_{L^2(\Omega)}= \sum_{r,s=1}^{m_n} (M_n)_{ri} (M_n)_{sk} \delta_{rs},
$$
where $\delta$ denotes the Kronecker symbol\footnote{That is to say that
$$ \delta_{rs}:= \left\{ \begin{array}{ll} 1 & \mbox{if}\ r=s \\ 0 & \mbox{otherwise}. \end{array} \right.$$}.
Thus it holds true for all $(i,k) \in \{ 1, \ldots , m_n \}^2$ that
$\langle \varphi_{2,n,i}' , \varphi_{2,n,k}' \rangle_{L^2(\Omega)}  = \sum_{r=1}^{m_n} (M_n)_{ri} (M_n)_{rk}= \sum_{r=1}^{m_n} (M_n^T)_{ir} (M_n)_{rk}= (M_n^T M_n)_{ik}=\delta_{ik}$. Consequently, the family
$\{  \varphi_{2,n,i}',\ i=1,\ldots,m_n \}$ is orthonormal in $L^2(\Omega)$. Moreover, for all $i \in \{ 1, \ldots, m_n \}$ and for a.e. $\sigma \in \pd \Omega$, we get upon writing $\nu(\sigma)=(\nu_1(\sigma),\ldots,\nu_d(\sigma))^T$, that
$$
\psi_{2,n,i}'(\sigma) := \nabla \varphi_{2,n,i}'(\sigma) \cdot \nu(\sigma) = \sum_{\ell=1}^d \partial_{\ell} \varphi_{2,n,i}'(\sigma) \nu_{\ell}(\sigma) $$
reads
\beas
\psi_{2,n,i}'(\sigma) & = & \sum_{\ell=1}^d \partial_{\ell} \left( \sum_{r=1}^{m_n} (M_n^T)_{ir} \varphi_{2,n,r}(\sigma) \right) \nu_{\ell}(\sigma) \nonumber \\
& = &   \sum_{r=1}^{m_n} (M_n)_{ri} \left( \sum_{\ell=1}^d \partial_{\ell} \varphi_{2,n,r}(\sigma)  \nu_{\ell}(\sigma) \right) \nonumber \\
& =  & \sum_{r=1}^{m_n} (M_n)_{ri} \psi_{2,n,r}(\sigma).
\eeas
Therefore, we have $\Psi_{2,n}'=M_n^T \Psi_{2,n}$ and hence $\Psi_{2,n}'=\Psi_{1,n}$ on $\pd \Omega$, by virtue of \eqref{a11}.
This terminates the proof of Theorem \ref{thm-eq}.

\bigskip
\noindent {\bf Acknowledgements.} This work was partially supported by the Agence Nationale de la Recherche under grant ANR-17- CE40-0029.

\bigskip

\bigskip


\noindent {\sc \'Eric Soccorsi}, Aix-Marseille Universit\'e, CNRS, CPT, Marseille, France.\\
E-mail: {\tt eric.soccorsi@univ-amu.fr}.

\end{document}